
\documentclass{amsart}



\RequirePackage{color}


\def\smallskip{\vskip\smallskipamount}
\def\medskip{\vskip\medskipamount}
\def\bigskip{\vskip\bigskipamount}

\usepackage{graphicx}
\usepackage{amsmath,amsthm,bm,mathrsfs,amscd,tikz}
\usepackage{ytableau}
\usepackage{comment}
\usepackage{mathdots}
\usepackage[all]{xy}
\usepackage{graphicx}
\usepackage[colorinlistoftodos]{todonotes}
\usepackage{enumerate}
\usepackage{feynmp-auto}
\usepackage[english]{babel}
\usepackage[utf8]{inputenc}
\usepackage{float}
\usepackage{tikz}
\usepackage{tikz-cd}
\usepackage{amssymb}
\usepackage{graphicx}
\usepackage{mathtools}
\usetikzlibrary{decorations.pathmorphing}





\newtheoremstyle{thmstyle}{}{}{\itshape}{}{\bfseries}{ }{5pt}{}
\newtheoremstyle{exstyle}{}{}{}{}{\bfseries}{ }{5pt}{}
\newtheoremstyle{defstyle}{}{}{}{}{\bfseries}{ }{5pt}{}
\newtheoremstyle{remstyle}{}{}{}{}{\bfseries}{ }{5pt}{}

\theoremstyle{thmstyle}
\newtheorem{thm}{Theorem}[section]
\newtheorem{theorem}[thm]{Theorem}

\newtheorem{lemma}[thm]{Lemma}

\newtheorem{proposition}[thm]{Proposition}

\newtheorem{corollary}[thm]{Corollary}

\theoremstyle{exstyle}

\newtheorem{example}[thm]{Example}

\theoremstyle{defstyle}

\newtheorem{definition}[thm]{Definition}
\newtheorem{def-prop}[thm]{Definition-Proposition}

\theoremstyle{remstyle}

\newtheorem{remark}[thm]{Remark}

\theoremstyle{remstyle}

\renewcommand{\atop}[2]{\genfrac{}{}{0pt}{}{#1}{#2}}

\newcommand{\str}{\operatorname{str}}

\newcommand{\cohook}{\operatorname{cohook}}
\newcommand{\kiss}{\operatorname{kiss}}
\newcommand{\Kiss}{\operatorname{Kiss}}
\newcommand{\Hom}{\operatorname{Hom}}
\newcommand{\Fac}{\operatorname{Fac}}
\newcommand{\Ext}{\operatorname{Ext}}
\newcommand{\cWc}{{{}_cW_c}}
\newcommand{\cW}{{{}_cW}}
\newcommand{\Wc}{W_c}
\newcommand{\Wh}{W_{/h}}
\newcommand{\hW}{{}_{h\backslash}W}

\DeclareMathOperator*{\Coker}{Coker}
\DeclareMathOperator*{\Image}{Im}		
\DeclareMathOperator*{\modu}{mod}
\DeclareMathOperator*{\proj}{proj}
\DeclareMathOperator*{\add}{add}
\DeclareMathOperator*{\Du}{D}
\newcommand{\keg}{\left[{\atop{Y''}{ X''}} {\scriptstyle Z}{\atop{Y'}  {X'}}\right]}
\newcommand{\kbp}{K^b(\proj A)}
\renewcommand{\Mc}{\operatorname{Mc}}
\newcommand{\jm}{j_{\rm max}}
\newcommand{\im}{i_{\rm max}}

\title{On the Combinatorics of Gentle Algebras}
\author[Brüstle, Douville, Mousavand, Thomas, Y\i ld\i r\i m]{Thomas Brüstle, Guillaume Douville, Kaveh Mousavand, Hugh Thomas, Emine Y\i ld\i r\i m}

\begin{document}

\begin{abstract}{For $A$ a gentle algebra, and $X$ and $Y$ string modules, we construct a combinatorial basis for $\Hom(X,\tau Y)$.  We use this to describe support $\tau$-tilting modules for $A$.  We give a combinatorial realization of maps in both directions realizing the bijection between support $\tau$-tilting modules and functorially finite torsion classes.  We give an explicit basis of $\Ext^1(Y,X)$ as short exact sequences.  We analyze several constructions given in a more restricted, combinatorial setting by McConville \cite{McC}, showing that many but not all of them can be extended to general gentle algebras.}
\end{abstract} 
\maketitle

\section{Introduction}
In this paper, we study the combinatorics of gentle algebras.
Suppose that $X$ and $Y$ are string modules for a gentle
algebra $A=kQ/I$.  (Terminology not explained in the introduction will be defined in the next section.)  An explicit basis of $\Hom(X,Y)$ has been known for a long time \cite{CB}.  We give an analogous construction for $\Hom(X, \tau_A Y)$.  

Our construction proceeds by embedding $A=kQ/I$ into a larger gentle algebra $\hat A=k\hat Q/\hat I$, which we call the \emph{fringed algebra} of $A$.  The reason for embedding $A$ in $\hat A$ is that, although $\tau_A$ is well-understood thanks to work of Butler and Ringel \cite{BR}, its behaviour is somewhat complicated.  It turns out that the behaviour of $\tau_{\hat A}$ on $\modu A$ is more uniform and thus easier to analyze than that of $\tau_A$.  

Using our construction, in Section \ref{maximal-non-kissing}, we give an explicit description in terms of combinatorics of strings of the support $\tau$-tilting modules for $A$, as certain \emph{maximal non-kissing collections} of strings.  Support $\tau$-tilting modules are in bijection with functorially finite torsion classes; in Section \ref{comb}, we show explicitly in terms of the combinatorics of strings how to pass from a maximal non-kissing collection to its associated functorially finite torsion class and how to return from a functorially finite torsion class to the maximal non-kissing collection.  
If there are only finitely many functorially finite torsion classes, we give, in Section \ref{torposet}, a combinatorial description of the poset of functorially finite torsion classes as a combinatorially-defined poset on the maximal non-kissing collections.  

By Auslander-Reiten duality, $\Ext^1(Y,X)$ is dual to a quotient of $\Hom(X, \tau Y)$.  In Section \ref{Ext}, we use our construction of $\Hom(X,\tau Y)$ to find a basis for $\Ext^1(Y,X)$, and we realize that basis as a collection of extensions of $Y$ by $X$.  

Our paper was inspired throughout by work of McConville \cite{McC}.  He was working in a combinatorial context, studying certain generalizations of the Tamari lattice, but he uncovered many phenomena which extend practically verbatim to arbitrary gentle algebras.  

One phenomenon which does extend beyond McConville's setting, but not to all
gentle algebras, is his observation that when two maximal non-kissing collections are related by a single mutation, the exchanged strands kiss exactly once.  In Section \ref{uniqueness}, we give an example (originally due to
\cite{GLS}) showing that this does not hold for all gentle algebras.  We then show that it holds for all gentle algebras such that all $\tau$-rigid modules are
bricks.

While working on this project, we were informed of two other papers which overlap with our results.  \cite{PPP} also 
constructs the fringed algebra (which they call the ``blossoming algebra").  They apply this to realize and study the support $\tau$-tilting fan for gentle algebras.  \cite{CPS} studies $\Hom$ groups in the derived category of representations of a gentle algebra.  From this, they also deduce an explicit description of the extensions between string modules.  Another recent paper which is relevant is \cite{EJR}: they also study $\Hom(X,\tau Y)$, but because they work in the more general setting of string algebras, the combinatorics which they analyze is more complicated.  

\subsection*{Acknowledgements} T.B. was partially supported by an NSERC Discovery Grant.  G.D. was partially supported by an NSERC Alexander Graham Bell scholarship.  K.M. and E.Y. were partially supported by ISM scholarships.  H.T. was partially supported by NSERC and the Canada Research Chairs program.  This paper was developed in the context of the LaCIM Representation Theory Working Group, and benefited from discussions with its other participants, including in particular Mathieu Guay-Paquet and Amy Pang.



\section{Preliminaries and Background}

\subsection*{Notations and Conventions}

Throughout this paper $k$ denotes an arbitrary field. 
A quiver $Q=(Q_0,Q_1,s,e)$ is a directed graph, which we always assume to be finite and connected, with $Q_0$ the vertex set, $Q_1$ the set of arrows, and $s, e:Q_1\to Q_0$ two functions which respectively send each arrow $\gamma \in Q_1$ to its source $s(\gamma)$ and its target $e(\gamma)$. We use lower case Greek letters $\alpha$, $\beta$, $\gamma$, $\ldots$ to denote arrows of $Q$. 

A \emph{path of length} $m\geq 1$ in $Q$ is a finite sequence of arrows $\gamma_{m}\cdots\gamma_{2}\gamma_{1}$ where $s(\gamma_{j+1})=e(\gamma_{j})$, for every $1 \leq j \leq m-1$. By $kQ$ we denote the \emph{path algebra} of $Q$.

A \emph{zero (monomial) relation} in $Q$ is given by a path of the form $\gamma_{m}\cdots\gamma_{2}\gamma_{1}$, where  $m \geq 2$.
A two sided ideal $I$ is called \emph{admissible} if $R_Q ^m \subset I \subseteq R_Q^2$ for some $m\geq 2$, where $R_Q$ denotes the arrow ideal in $kQ$.  
In what follows, $I$ always denotes an admissible ideal generated by a set of zero relations in $Q$.  We use capital letters $A$ and $B$ to denote quotients of $kQ$ by such ideals.

\subsection*{Gentle algebras}\label{String Algebra}

$B = kQ/I$ is called a \emph{string algebra} if the following conditions hold:

\begin{enumerate}[(S1)]
\item There are at most two incoming and two outgoing arrows at every vertex of $Q$.
\item For every arrow $\alpha$, there is at most one $\beta$ and one $\gamma$ such that $\alpha\beta \notin I$ and $\gamma\alpha \notin I$.
\end{enumerate}

A string algebra is called \emph{gentle} if the following additional conditions are satisfied:

\begin{enumerate}[(G1)]
\item The ideal $I$ is quadratic, i.e. there is a set of monomials of length two that generate $I$.
\item For every arrow $\alpha$, there is at most one $\beta$ and one $\gamma$ such that $0\ne\alpha\beta \in I$ and $0\ne\gamma\alpha \in I$.
\end{enumerate}

\subsection*{Strings and Bands} To introduce the notion of string, we need the following definitions:

For a given quiver $Q$, let $Q^{op}$ be the \emph{opposite} quiver, obtained from $Q$ by reversing its arrows. We consider the arrows in $Q^{op}$ as the formal inverses of arrows in $Q$ and denote them by $\gamma^{-1}$, for every $\gamma \in Q$.  

Let $Q^s$ be the \emph{double} quiver of $Q$, which has $Q_0$ as the vertex set and $Q_1 \cup Q^{op}_1$ as the arrow set.  A \emph{string in $Q$} is a path $C = \gamma_n\cdots\gamma_1$  in $Q^s$ with the following properties:

\begin{enumerate}
\item[(A1)] No $\gamma_i$ is followed by its inverse.
\item[(A2)] Neither $C$ nor $C^{-1}$ contains a subpath in $I$.
\end{enumerate}

In such a case, $C = \gamma_n\cdots\gamma_1$ is called a string of length $n$ which starts at $s(C)=s(\gamma_1)$ and ends at $e(C)=e(\gamma_n)$. In addition, to every vertex $x\in Q_0$ we associate a zero-length string $e_{x}$ starting and
ending at $x$ which is its own formal inverse.

The set of all strings in $Q$ is denoted by $\str(Q)$. It is clear that $\str(Q)$ really depends on $Q$ and $I$, but we suppress $I$ from the notation, because it will be clear from context.

We call $C = \gamma_n\cdots\gamma_1$ a \emph{direct} string if $\gamma_i \in Q_1$ for every $1 \leq i \leq n$, and dually $C$ is called \emph{inverse} if $C^{-1}$ is direct.
A \emph{band} in $Q$ is a string $C = \gamma_n\cdots\gamma_1$ such that $C^n$ is well-defined for each $n \in \mathbb{N}$ and, furthermore, $C$ itself is not a strict power of another string of a smaller length.

\begin{definition} \label{stringdiagram} For every string $C = \gamma_n\cdots\gamma_1$, the associated \emph{diagram} of $C$ is the illustration by a sequence of up and down arrows, from right to left, by putting a left-down arrow starting at the current vertex for an original arrow and a right-down arrow ending at the vertex for an inverse arrow.
\end{definition}

\begin{example}
In the following quiver $Q$:
\begin{align*}
\begin{CD}
\bullet_2 @>\alpha_1>> \bullet_4\\
@A\beta_1 AA          @AA\beta_2 A\\
\bullet_1 @>>\alpha_2> \bullet_3
\end{CD}
\end{align*}
since $R_Q^3 = 0$, the zero ideal $I=0$ is admissible.  It is easy to see $A = kQ/I = kQ$ is a gentle algebra. Following the description above, the diagram of $C = \alpha_2\beta_1^{-1}\alpha_1^{-1}\beta_2$ in $\str(Q)$ is the following:

\begin{align*}
\xymatrix{
  & \bullet_1 \ar[dr]_{\beta_1} \ar[dl]_{\alpha_2}\\
\bullet_3 & & \bullet_2 \ar[dr]_{\alpha_1} &  &\bullet_3 \ar[dl]_{\beta_2}\\
&& & \bullet_4 
}
\end{align*}
\end{example}

\subsection*{String Modules}

Considering a string algebra $B$, to every string $C$ we associate an indecomposable $B$-module $M(C)$, constructed as follows: in the diagram of $C$, as defined in Definition \ref{stringdiagram}, put a one dimensional $k$-vector space at each vertex and the identity map for each arrow connecting two consecutive vertices of the diagram. In the representation $M(C)$, the dimension of the vector space associated to each vertex of the quiver is given by the number of times that the string $C$ passes through the aforementioned vertex. Consequently, the dimension vector of $M(C)$ has an explicit description. Provided it does not cause confusion, we do not distinguish between a string $C$ and the associated string module $M(C)$ and we use $C$ to refer to both. For details see \cite{CB, BR}. 

For a string $C$, by means of the equivalence relation introduced in \cite{BR}, we always identify $M(C)$ and $M(C^{-1})$, which are isomorphic as $B$-modules. 
Two string modules are isomorphic if and only if their strings are equivalent.

The indecomposable modules which are not string modules are associated to bands,
and are called band modules.  Since our focus is on string modules, we do
not discuss the details of band modules here.

As shown in \cite{BR}, the Auslander-Reiten translation of string modules over a string algebra admits an explicit combinatorial description.  In order to
state it, we will need to introduce some further combinatorial notions.

\begin{lemma}\label{uniqueadd}
  Let $W$ be a string of positive length, and let us pick an orientation
  $\epsilon$ where
  $\epsilon\in\{\textrm{direct},\textrm{inverse}\}$.  There is at most one
  way to add an arrow preceding $s(W)$ whose orientation agrees with $\epsilon$,
  such that the resulting path is still a string.  Similarly, there is at
  most one way to add such an arrow following $e(W)$.
%
\end{lemma}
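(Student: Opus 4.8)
The plan is to prove the statement for adjoining an arrow before $s(W)$, and then obtain the statement for $e(W)$ by applying it to the reversed string $W^{-1}$. Indeed, since the string conditions (A1) and (A2) are symmetric under inversion, appending an arrow $\eta$ after $e(W)$ produces a string exactly when prepending $\eta^{-1}$ before $s(W^{-1})=e(W)$ does, and $\eta^{-1}$ has orientation opposite to that of $\eta$; as the claim addresses both orientations, nothing is lost. So write $W=\gamma_n\cdots\gamma_1$ and set $v=s(W)=s(\gamma_1)$. An arrow $\eta$ that may be prepended must satisfy $e(\eta)=v$, and fixing the orientation $\epsilon$ means $\eta$ is either a direct arrow incoming at $v$, or the formal inverse $\delta^{-1}$ of a direct arrow $\delta$ outgoing at $v$; by (S1) there are at most two such $\eta$. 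Because $W$ is already a string, the only features of $W\eta$ that can spoil it are the newly created ones at $\eta$: the length-two factor $\gamma_1\eta$ in $W\eta$, the length-two factor $\eta^{-1}\gamma_1^{-1}$ in $(W\eta)^{-1}$, and the requirement (A1) that $\gamma_1\ne\eta^{-1}$.

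I would then distinguish two cases according to whether $\epsilon$ agrees with the orientation of $\gamma_1$. If it agrees, the new adjacency places two direct letters side by side: if both are direct then $\gamma_1\eta$ is a genuine path in $Q$, and if both are inverse — say $\gamma_1=\beta^{-1}$ and $\eta=\delta^{-1}$ — then $\delta\beta$ is a genuine path in $Q$ sitting inside $(W\eta)^{-1}$. Here (A1) is automatic, and a necessary condition for $W\eta\in\str(Q)$ is that this path avoid $I$, i.e.\ $\gamma_1\eta\notin I$ (resp.\ $\delta\beta\notin I$); since, by (S2), each arrow has at most one predecessor and at most one successor whose composite with it avoids $I$, there is at most one such $\eta$, hence at most one valid one. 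If instead $\epsilon$ disagrees with the orientation of $\gamma_1$, then the new adjacency mixes a direct and an inverse letter, so none of the factors of $W\eta$ or of $(W\eta)^{-1}$ that are not already factors of $W$ is a path in $Q$; thus (A2) imposes nothing new and the sole constraint is (A1), $\gamma_1\ne\eta^{-1}$. Comparing sources and targets, the unique $\eta$ forbidden by (A1) lies among the at most two candidates given by (S1): when $\eta$ is direct it is the arrow $\beta$ with $\gamma_1=\beta^{-1}$, which is incoming at $v$; when $\eta=\delta^{-1}$ it is $\gamma_1^{-1}$, with $\gamma_1$ outgoing at $v$. Excluding it leaves at most one candidate.

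I expect the only real friction to be the orientation bookkeeping in the ``disagrees'' case: one must check carefully that every newly created factor really does have mixed orientation (so that it cannot lie in $I$, making (A2) vacuous) and that the arrow killed by (A1) genuinely is one of the $\le 2$ arrows with endpoint $v$ counted by (S1). Granting that, the lemma is immediate from (S1) and (S2), and, as explained, the statement for $e(W)$ follows formally from the one for $s(W)$.
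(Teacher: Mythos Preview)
Your proof is correct and follows the same approach as the paper: the lemma is an immediate consequence of conditions (S1) and (S2). The paper's proof is a single sentence to that effect, while you have carefully unpacked the case analysis; nothing more is needed.
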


\begin{proof} The lemma follows immediately from conditions (S1) and (S2).
\end{proof}

In these cases we say that $W$ can be extended at its start, or at its end, by an
inverse arrow, or a direct arrow, as applicable.  Note that the lemma is
not true as stated if $W$ is of length zero.  

If a string $W$ can be extended at its start by an inverse arrow, then we
consider the result of adding an inverse arrow at $s(W)$, followed by adding
as many direct arrows as possible.  We call this adding a cohook at
$s(W)$.  We denote the result of this operation $\Wc$.  It is well-defined
by Lemma \ref{uniqueadd}.

Symmetrically, if $W$ can be extended by a direct arrow at $e(W)$, then
we consider the result of adding a direct arrow at $e(W)$ followed by as
many inverse arrows as possible, and we call this adding a cohook at $e(W)$.
We denote the result of this by $\cW$.  

If it is possible to add a cohook at each end of $W$, we write $\cWc$ for
the result of doing so. 

If $W$ is of length zero, i.e., a single vertex, there may be two arrows
pointing towards $W$, and in that case, two cohooks can be added to $W$.
We define $\cWc$ to be the result of adding both cohooks.

We now describe the operation of removing a hook from $W$.  To remove
a hook from the start of $W$, we look for the first direct arrow in $W$,
and we remove it, together with all its preceding inverse arrows.  In other
words, we find a factorization $W= X \theta \mathcal I$, with
$\mathcal I$ an inverse string and $\theta$ a direct arrow.  The result of removing a hook from the beginning of
$W$ is $X$.  We write $X=\Wh$.  Note that this is not defined if $W$
contains no direct arrows.

Similarly, to remove a hook from the end of $W$, we look for the last inverse
arrow of $W$ and remove it, together with all subsequent direct arrows.
In other words, we find a factorization of $W$ as $W=\mathcal D \gamma^{-1}
Y$, with $\mathcal D$ direct; then the result of removing a hook from the end
of $W$ is $Y$.  We write $Y=\hW$.


\begin{theorem}\cite{BR} \label{tau-th}
  Let $B=kQ/I$ be a string algebra, and let $W$ be a string module.  At
  either end of $W$, if it is possible to add a cohook, add a cohook.
  Then, at the ends at which it was not possible to add a cohook, remove
  a hook.  The result is $\tau_B W$.
\end{theorem}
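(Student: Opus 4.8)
This is the Butler--Ringel description of $\tau$ for string modules \cite{BR}; here is the line of argument I would follow. Since $\tau_B = D\operatorname{Tr}$, the plan is to compute a minimal projective presentation of the string module $M(W)$, apply the Nakayama functor $\nu = D\Hom_B(-,B)$, and then identify the resulting kernel combinatorially, exploiting that over a string algebra (and over $B^{\mathrm{op}}$, which is again a string algebra) the indecomposable projectives and injectives are themselves string modules. First, the top of $M(W)$ is $\bigoplus_v S(v)$, with $v$ running over the \emph{peaks} of the diagram of $W$ (the vertices at which $W$ arrives by a direct arrow and departs by an inverse arrow, together with a ``boundary peak'' at an end of $W$ that begins or ends with a direct arrow), so the projective cover is $P_0=\bigoplus_{v\text{ peak}}P(v)$. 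Each $P(v)$ is the string module whose diagram has $v$ at its apex with the two maximal direct paths out of $v$ (possibly trivial) as its descending legs; the covering map $P(v)\to M(W)$ identifies the ``$\vee$'' around $v$ in $P(v)$ with the piece of $W$ obtained by descending from the peak $v$ until $W$ turns or ends, and kills the leftover of each leg. Hence $\ker(P_0\to M(W))$ is a direct sum of direct string modules (these leftover sub-paths), and $P_1$ is the direct sum of their projective covers; minimality is automatic.

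Applying $\nu$ to the minimal presentation $P_1\xrightarrow{f}P_0\to M(W)\to 0$ gives, when $M(W)$ is non-projective, the exact sequence
\[
0\longrightarrow \tau_B M(W)\longrightarrow \nu P_1 \xrightarrow{\ \nu f\ } \nu P_0 \longrightarrow \nu M(W)\longrightarrow 0,
\]
and $\nu P(v)=I(v)$, the indecomposable injective at $v$, which over a string algebra is again a string module: the one with $v$ as its unique deep and the two maximal inverse strings ending at $v$ as its ascending legs. Thus $\tau_B M(W)=\ker(\nu f)$ is the kernel of an explicit map between direct sums of injective string modules, and on each pair of summands $\nu f$ is (the dual of) one of the canonical inclusion/projection maps between string modules in the sense of \cite{CB}.

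It remains to read off $\ker(\nu f)$ as a string. The computation splits into an ``interior'' part and the two ends. In the interior, the identifications made above make the pieces of the various $I(v)$ glue back along precisely the sub-walks of $W$ that were excised in forming $P_0$ and $P_1$, so the interior of $\ker(\nu f)$ reproduces the interior of $W$. At each of the two ends, the local shape of $\ker(\nu f)$ is controlled entirely by whether $W$ can be extended there by a cohook, in the sense of Lemma \ref{uniqueadd} and the surrounding discussion: if it can, the corresponding end of $\ker(\nu f)$ is exactly the segment that the recipe attaches when ``adding a cohook'' at that end; if it cannot, one checks that $\ker(\nu f)$ instead has the end of $W$ truncated in precisely the way prescribed by ``removing a hook''. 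Carrying this out at both ends shows $\ker(\nu f)$ is the string obtained from $W$ by adding a cohook wherever possible and otherwise removing a hook --- so for instance $\cWc$ when both cohooks can be added --- which is the assertion.

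Finally one handles the degenerate situations not covered above. If $W$ has length $0$, so $M(W)=S(v)$, the computation uses the convention fixed after Lemma \ref{uniqueadd} that one adds a cohook at each end where possible (there may be two, one, or none). If $M(W)$ is projective --- the simple case $S(v)$ among them --- then gentleness forces that no cohook can be attached at either end (attaching one would either contradict maximality of the legs of $P(v)$ or, by (G2), create a subpath of length two lying in $I$), and at one end the hook-removal procedure runs off the end of $W$ (is undefined); the convention is that in that case $\tau_B W=0$, consistent with $\tau$ of a projective. One also checks that whenever $W$ has no direct (resp. no inverse) arrow, so that $\Wh$ (resp. $\hW$) is undefined, a cohook can nonetheless be added at the relevant end, so the recipe is well defined for non-projective $W$. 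The main obstacle is the bookkeeping of the third paragraph: matching $\ker(\nu f)$ to the combinatorial recipe requires running through the sub-cases of how $W$ begins and ends (by a direct or an inverse arrow; whether the adjacent vertex of $Q$ carries a further arrow; whether the relevant length-two composite lies in $I$), and in each sub-case tracking the diagram conventions of Definition \ref{stringdiagram} with care. Each sub-case is a short local computation, but there are several of them and the orientation conventions must be followed scrupulously; this is where essentially all the work lies. Alternatively, one can bypass $\operatorname{Tr}$ and instead construct the almost split sequences directly, as in \cite{BR}, using the hook/cohook operations to write down the middle and end terms; the case analysis at the ends is of the same nature.
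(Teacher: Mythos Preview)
The paper does not prove this theorem: it is quoted from Butler--Ringel \cite{BR} and simply cited. So there is no ``paper's own proof'' to compare against; the only question is whether your sketch is a sound route to the result.

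Your outline via $\tau_B = D\operatorname{Tr}$ is one of the standard ways to establish this, and the structure you describe (projective cover indexed by peaks, kernel a sum of direct strings, apply $\nu$ and read off the kernel end-by-end) is correct in shape. The original argument in \cite{BR} takes the alternative route you mention at the end: it constructs the almost split sequences directly by exhibiting the irreducible maps given by hook/cohook operations and verifying the AR-sequence axioms, rather than computing $D\operatorname{Tr}$. Both approaches ultimately reduce to the same local case analysis at the endpoints.

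One small slip: in your treatment of the projective case you write ``gentleness forces that no cohook can be attached'' and invoke (G2). But the theorem is stated for \emph{string} algebras, not gentle algebras, so (G2) is not available. The correct reason that a projective string module admits no cohook at either end is the string-algebra axioms (S1)--(S2) together with the description of indecomposable projectives as hooks; no appeal to (G2) is needed or permitted here. Apart from that, your sketch is a reasonable outline, with the honest caveat (which you acknowledge) that the endpoint bookkeeping is where all the work sits and would need to be written out in full for a complete proof.
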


Note that if it is possible to add a cohook at exactly one endpoint of $W$,
then after having done so, it will be possible to remove a hook from the
other end.  If it is impossible to add a cohook at either end, then either it is possible to remove a hook from each end, or else the module was projective,
in which case we interpret the result of removing a hook from each end as
the zero module (which is consistent with the fact that the Auslander-Reiten
translation of a projective module is zero).

\subsection*{Morphisms between string modules}
A basis for the space of all homomorphisms between two strings was given by W. Crawley-Boevey in \cite{CB} in a more general setting. In \cite{S}, J. Schröer reformulated the aforementioned basis for string modules. In this paper, we mainly use Schröer's reformulation and notation for the description of $\Hom$-space. 

\begin{definition}
For $C \in \str(Q)$, the set of all factorizations of $C$ is defined as 
$$\mathcal{P}(C)=\{(F,E,D) \mid F,E,D \in \str(Q) \textsf{ and } C=FED\}.$$
Moreover, if $(F,E,D) \in \mathcal{P}(C)$, we write  $(F,E,D)^{-1}=(D^{-1},E^{-1},F^{-1})\in \mathcal{P}(C^{-1})$.

A triple $(F,E,D) \in \mathcal{P}(C)$ is a called a \emph{quotient factorization} of $C$ if the following hold:

\begin{enumerate}[(i)]			

\item $D=e_{s(E)}$ or $D=\gamma D'$ with $\gamma$ in $Q^{op}$;	

\item $F=e_{e(E)}$ or $F=F'\theta$ with $\theta$ in $Q$.

\end{enumerate}

A quotient factorization $(F,E,D)$ induces a surjective quotient map from
$C$ to $E$.
The set of all quotient factorizations of $C$ is denoted by $\mathcal{F}(C)$.

A factor of $C$ is generally of the following form, where
$D$ and $F$ can also be lazy paths. 

\begin{center}
\begin{tikzpicture}[scale=0.6]
\draw  [-,decorate,decoration=snake] (0,0) --(2,0);
----
\draw [->] (0,0) -- (-0.75,-1);
\node at (-0.6,-0.25) {$\theta$};
\draw [-,decorate,decoration=snake] (-2.5, -1) -- (-0.5,-1);
----
\draw [->] (2,0) -- (2.75,-1);
\node at (2.6,-0.25) {$\gamma$};
\draw [-,decorate,decoration=snake] (2.75,-1) -- (4.5,-1);
----
\node at (1,0.8) {$E$};
\node at (2,0.5) {$_{s(E)}$};
\node at (0,0.5) {$_{e(E)}$};
\node at (2,0.0) {$\bullet$};
\node at (0,0.0) {$\bullet$};
\node at (3,0.8) {$D$};
\node at (-1,0.8) {$F$};
\end{tikzpicture}
\end{center}
\bigskip

Dually, $(F,E,D) \in \mathcal{P}(C)$ is called a \emph{submodule factorization} of $C$ if the following hold:

\begin{enumerate}[(i)]

\item $D=e_{s(E)}$ or $D=\gamma D'$ with $\gamma$ in $Q$;

\item $F=e_{e(E)}$, or $F=F'\theta$ with $\theta$ in $Q^{op}$.
\end{enumerate}

The set of all submodule factorizations of $C$ is denoted by $\mathcal{S}(C)$.
A submodule factorization $(F,E,D)$ induces an inclusion of $E$ into $C$.

\end{definition}


\begin{definition}\label{admissible_pair}
For $C_1$ and $C_2$ in $\str(Q)$, a pair $((F_1,E_1,D_1),(F_2,E_2,D_2)) \in \mathcal{F}(C_1) \times \mathcal{S}(C_2)$ is called \emph{admissible} if $E_1 \sim E_2$ (i.e, $E_1=E_2$ or $E_1=E_2^{-1}$). The collection of all admissible  pairs is denoted by $\mathcal{A}(C_1,C_2)$.
\end{definition}

For each admissible pair $T=((F_1,E_1,D_1),(F_2,E_2,D_2))$ in $\mathcal{A}(C_1,C_2)$, there exists a homomorphism between the associated string modules $$f_T: C_1 \rightarrow C_2, $$ defined as the composition of the projection
from $C_1$ to $E_1$, followed by the identification of $E_1$ with $E_2$, followed by the inclusion of 
$E_2$ into $C_2$.  
We refer to these homomorphisms $f_T$ as {\em graph maps.}

The following theorem plays a crucial role in this paper:

\begin{theorem}\cite{CB}\label{Thm-graph-maps}
If $A=kQ/I$ is a string algebra and $C_1$ and $C_2$ are string modules, the set of graph maps $\{f_T\mid T\in \mathcal{A}(C_1,C_2) \}$ is a basis for $\Hom_A(C_1, C_2)$.
\end{theorem}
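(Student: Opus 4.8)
The statement to prove is Theorem~\ref{Thm-graph-maps}: for a string algebra $A = kQ/I$ and string modules $C_1, C_2$, the graph maps $\{f_T \mid T \in \mathcal{A}(C_1,C_2)\}$ form a basis of $\Hom_A(C_1,C_2)$. Since this is attributed to Crawley-Boevey \cite{CB} (with Schröer's reformulation \cite{S}), the plan is to give the proof in the reformulated language, carrying out two independent tasks: (a) linear independence of the graph maps, and (b) spanning. The overall strategy is to exploit the fact that a string module $M(C)$ has a distinguished basis indexed by the vertices $C$ passes through (equivalently by the substrings/positions of $C$), and that maps between string modules are highly constrained because each arrow acts as an identity or a zero matrix on these basis elements.

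\emph{Linear independence.} First I would fix, for each string $C$, the standard basis $\{b_0,\dots,b_n\}$ of $M(C)$ where $b_i$ corresponds to the $i$-th vertex along the walk $C = \gamma_n\cdots\gamma_1$. For an admissible pair $T = ((F_1,E_1,D_1),(F_2,E_2,D_2))$, the graph map $f_T$ sends the basis element of $C_1$ at the ``bottom'' vertex $s(E_1)$-position (the start of the common substring $E_1$) to the corresponding basis element of $C_2$, and kills the basis elements ``below'' $D_1$ while the image lies ``above'' $D_2$. Concretely, each $f_T$ is, in these bases, a $0/1$ matrix whose support is a single ``staircase'' determined by how $E_1 \sim E_2$ is glued. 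I would then argue that distinct admissible pairs give matrices whose supports have distinct ``lowest-left corners'' (the pair of positions matching $s(E_1)$ in $C_1$ and $s(E_2)$ in $C_2$, together with an orientation sign), so that no nontrivial linear combination can vanish: pick a pair $T$ appearing with nonzero coefficient whose corner position is minimal in a suitable partial order, and evaluate the combination on the corresponding basis vector of $C_1$; only $f_T$ contributes at that corner, giving a nonzero result. This is essentially bookkeeping about which factorizations can share the same matrix entry, and the string-algebra axioms (S1), (S2) guarantee there is no ambiguity in how substrings extend.

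\emph{Spanning.} This is the part I expect to be the main obstacle, and it is where Crawley-Boevey's original argument does real work. The plan is: given an arbitrary $f \in \Hom_A(C_1,C_2)$, write $f$ in the standard bases as a matrix, and analyze its entries using the constraint that $f$ commutes with the action of every arrow. Because $M(C_1)$ is generated by the basis elements at ``peak'' (source) vertices of the walk $C_1$, and because on $M(C_1)$ each arrow acts by a partial identity, one shows that $f(b_i^{(1)})$ must be a linear combination of basis elements $b_j^{(2)}$ such that the substring of $C_1$ hanging ``below $b_i$'' maps compatibly onto the substring of $C_2$ hanging ``below $b_j$'' — i.e., such pairs $(i,j)$ correspond exactly to admissible pairs in $\mathcal{A}(C_1,C_2)$. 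One then peels off graph maps one at a time: choose a top (peak) generator of $C_1$ on which $f$ is nonzero, extract from $f(b_i^{(1)})$ a coefficient of some admissible $b_j^{(2)}$, subtract the corresponding multiple of $f_T$, and induct on (say) the total dimension of the image or on a well-chosen lexicographic complexity of the matrix. The key technical lemma, which I would isolate, is: \emph{any homomorphism between string modules which is nonzero restricts nontrivially to a quotient-of-submodule that is itself a string module $M(E)$ with $E$ common to both, realized by a quotient factorization on the source and a submodule factorization on the target} — this is where gentleness/string-algebra conditions (S1), (S2) are used to rule out branching that would otherwise allow more exotic maps.

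\emph{Main obstacle.} The delicate point in spanning is verifying that the ``compatibility'' forced by commuting with arrow actions is \emph{exactly} the admissibility condition of Definition~\ref{admissible_pair}, with no extra or missing cases — in particular that the quotient-factorization conditions (i), (ii) on the source side and the submodule-factorization conditions on the target side are precisely what is needed for a genuine module map, and that one cannot build a map from a ``longer'' common substring than the axioms permit. I would handle this by a careful case analysis on the arrows at $s(E)$ and $e(E)$ on both strings, using (S2) to see that the choice of how to extend $E$ downward in $C_1$ (by an arrow in $Q^{op}$ or a lazy path) and in $C_2$ (by an arrow in $Q$ or lazy) is forced, and hence the set of admissible pairs is in bijection with the ``matrix-entry data'' that any homomorphism can exhibit. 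Since the full argument is in \cite{CB, S}, I would present the linear-independence part in detail and then give the spanning part by the peeling/induction scheme above, citing \cite{CB} for the case analysis that identifies admissibility with module-map compatibility.
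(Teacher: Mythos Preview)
The paper does not give its own proof of this theorem: it is stated as a result of Crawley-Boevey \cite{CB} (in Schr\"oer's reformulation \cite{S}) and is used as a black box throughout. So there is no proof in the paper to compare your proposal against.

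That said, your sketch is a reasonable reconstruction of the standard argument from the cited literature. The linear-independence part via the explicit $0/1$ matrices in the canonical bases is exactly how one argues, and your ``peeling'' strategy for spanning---tracking a peak generator, identifying the forced common substring via the (S1)/(S2) axioms, and subtracting off a graph map---is the right shape. If you actually carry it out, the main care point is the one you flag: making precise that the commutation with arrow actions forces exactly the quotient/submodule factorization conditions, with no residual maps left over. This is genuine case analysis and is where \cite{CB} does the work; your plan to isolate it as a lemma and cite \cite{CB,S} for the details is appropriate given that the present paper treats the result as background.
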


\subsection*{Auslander-Reiten duality}
Write $\underline{\Hom}_B(M,N) = \Hom_B(M,N)/P(M,N)$ for the quotient of $\Hom_B(M,N)$ by the morphisms factoring through a projective module.

Similarly, write $I(M,N)$ for the morphisms from $M$ to $N$ factoring through injectives, and
$\overline{\Hom}_B(M,N)$ for $\Hom_B(M,N)/I(M,N)$.


The following functorial isomorphisms, known as the \emph{Auslander-Reiten formulas}, play an important role in the rest of this paper:

\begin{theorem}\label{ARformula}
For every pair of $B$-modules $X$ and $Y$, we have the following:

\begin{itemize}
\item $\overline{\Hom}_B(X,\tau_B Y) \simeq D \Ext^1_B(Y,X)$;
\item $D\underline{\Hom}_B(Y,X) \simeq \Ext^1_B(X,\tau_B Y)$.
\end{itemize}
\end{theorem}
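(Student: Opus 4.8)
The plan is to recall that these are standard facts from Auslander-Reiten theory and to indicate how one assembles them from the classical ingredients, since the paper presumably wants a self-contained (or at least well-referenced) account. The starting point is the \emph{Auslander-Reiten duality} in the form given by Auslander, Reiten and Smal\o: for a finite-dimensional $k$-algebra $B$ and finitely generated modules $X, Y$, there is a functorial isomorphism
\[
D\,\overline{\Hom}_B(X, \tau_B Y) \simeq \underline{\Hom}_B(Y, X),
\]
together with its dual
\[
D\,\underline{\Hom}_B(Y, X) \simeq \overline{\Hom}_B(X, \tau_B Y).
\]
Applying $D$ to the second of these and using that $D$ is a duality on finite-dimensional vector spaces recovers the first, so really there is one statement here, stated in two equivalent forms. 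The job is then to connect $\overline{\Hom}$ and $\underline{\Hom}$ to $\Ext^1$.

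The key step is the classical identification of the stable $\Hom$ spaces with $\Ext$-groups via dimension-shifting. Concretely, I would take a minimal injective copresentation $0 \to Y \to I^0 \to I^1$ and apply $\Hom_B(-, X)$; the obstruction to lifting maps $Y \to I^0$ through a map into $I^1$, modulo maps that factor through $I^0$ (equivalently through an injective), is exactly $\Ext^1_B(Y,X)$ after one identifies $\Coker(\Hom(I^0,X)\to \Hom(Y,X))$ appropriately — more precisely, one uses that $\tau_B Y$ fits into the exact sequence $0 \to \tau_B Y \to \nu B' \to \nu B'' $ coming from a minimal projective presentation $P'' \to P' \to Y \to 0$ via the Nakayama functor $\nu = D\Hom_B(-,B)$, and that $\overline{\Hom}_B(X, \tau_B Y) \simeq D\Ext^1_B(Y, X)$ then follows by a diagram chase comparing $\Hom_B(X, \tau_B Y)$ with the defining complex for $\Ext^1_B(Y,X)$. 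The second formula $D\underline{\Hom}_B(Y,X) \simeq \Ext^1_B(X, \tau_B Y)$ is obtained by the dual argument (work with projective presentations of $X$ and injective copresentations, or simply apply the first formula to the opposite algebra $B^{\mathrm{op}}$ together with the standard duality $\Ext^1_B(X, \tau_B Y) \simeq D\,\overline{\Hom}_B(\tau_B Y, \tau_B^{-1}\tau_B X) = \ldots$), taking care that $\tau_B^{-1}$ and $\tau_B$ are mutually inverse on the stable categories.

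I expect the only real subtlety — not a difficulty so much as a bookkeeping hazard — to be the careful tracking of which maps are being quotiented out (those factoring through projectives versus those factoring through injectives) and making sure the functoriality in both $X$ and $Y$ is genuinely natural, so that the isomorphisms can later be used compatibly with the graph-map bases. Since nothing here is specific to gentle or string algebras, the cleanest write-up is to state the result as a consequence of \cite{ASS} or \cite{ARS}, giving the two displayed isomorphisms as the two standard forms of Auslander-Reiten duality, and to remark that the second follows from the first by applying the $k$-duality $D$ and using the Auslander-Reiten duality in the form $D\Ext^1_B(X,\tau_B Y)\simeq \underline{\Hom}_B(Y,X)$ is \emph{not} what we want — rather $\Ext^1_B(X,\tau_B Y)\simeq D\underline{\Hom}_B(Y,X)$ is literally the $D$-dual of the ASS isomorphism. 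Thus the entire proof is: cite the standard duality, observe the two forms are $D$-dual to one another, done.
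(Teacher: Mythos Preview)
Your proposal is more than the paper actually does: Theorem~\ref{ARformula} is stated in the preliminaries section as a known background result (the classical Auslander--Reiten formulas) and is given no proof whatsoever in the paper. So the ``paper's own proof'' is simply to state the result and rely on the reader knowing it from standard sources such as \cite{ASS}. Your instinct at the end of the proposal---that the cleanest write-up is to cite the standard duality and note the two forms are $D$-dual---is exactly right and matches the paper's treatment; the longer sketch you give of the diagram chase with the Nakayama functor is unnecessary here, though not wrong.
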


\subsection*{$\tau$-tilting theory}
$\tau$-tilting theory was recently introduced by Adachi, Iyama, and Reiten \cite{AIR}.  
An $A$-module module is said to be \emph{$\tau$-rigid} if $\Hom(X,\tau X)=0$.  
The module $X$ is said to be 
\emph{$\tau$-tilting} if it is $\tau$-rigid and it has as many non-isomorphic indecomposable summands as $A$.  The module $X$ is said to be \emph{support $\tau$-tilting} if $X$ is $\tau$-rigid and the number of indecomposable summands of $X$ equals the number of non-isomorphic simple modules in its composition series.  

A \emph{torsion class} in $\modu A$ is a full subcategory closed under quotients and 
extensions.  If $X$ is an $A$-module, we write $\Fac X$ for the full subcategory of $\modu A$ consisting of quotients of sums of copies of $A$.  
A torsion class is called {\it functorially finite} if
it can be written as $\Fac X$ for some $A$-module $X$.  

An Ext-projective module in a subcategory $T$ of $\modu A$ is a module $X$ such that 
$\Ext^1_A(X,-)$ vanishes on all modules in $T$.  

\begin{theorem}[{\cite[Theorem 2.7]{AIR}}]\label{st-tor}
There is a bijection from basic support $\tau$-tilting modules to 
functorially finite torsion classes, which sends the support $\tau$-tilting module 
$X$ to $\Fac X$, and sends the functorially finite torsion class $T$ to the direct sum of its $\Ext$-projective indecomposables.  
\end{theorem}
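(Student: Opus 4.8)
The plan is to quote and reprove the key bijection from \cite{AIR} in the form stated. Since this is Theorem 2.7 of \cite{AIR}, the honest thing is to cite it; but I will sketch the argument so the reader sees why the two maps are mutually inverse. The two assignments are: given a basic support $\tau$-tilting module $X$, send it to the torsion class $\Fac X$; given a functorially finite torsion class $T$, let $P(T)$ denote the direct sum of one copy of each indecomposable $\operatorname{Ext}$-projective in $T$, and send $T$ to $P(T)$. We must check (a) $\Fac X$ is a functorially finite torsion class, (b) $P(T)$ is a basic support $\tau$-tilting module, (c) $P(\Fac X) \cong X$, and (d) $\Fac P(T) = T$.

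First I would handle (a): $\Fac X$ is closed under quotients essentially by definition, and closure under extensions uses that $X$ is $\tau$-rigid — this is where the Auslander-Reiten formula (Theorem~\ref{ARformula}) enters, via the standard fact that $\operatorname{Hom}(X,\tau X)=0$ forces $\operatorname{Ext}^1(X, \Fac X)$ to behave well, so that an extension of two quotients of $X^{n}$ is again generated by $X$. Functorial finiteness is immediate since $\Fac X$ is by construction of the form $\Fac(-)$. For (b), the content is that the $\operatorname{Ext}$-projectives in a functorially finite torsion class $T = \Fac Y$ form, up to multiplicity, a support $\tau$-tilting module whose number of summands equals the number of simples appearing in $T$; one shows $X$ $\tau$-rigid $\Leftrightarrow$ $\operatorname{Ext}^1(X,-)$ vanishes on $\Fac X$ (again Theorem~\ref{ARformula}), and a counting/Bongartz-type completion argument gives that a maximal such $X$ inside $T$ has exactly the right number of indecomposable summands.

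For (c) and (d), the crucial observation is that if $X$ is support $\tau$-tilting then $X$ is $\operatorname{Ext}$-projective in $\Fac X$ (immediate from $\operatorname{Ext}^1(X, \Fac X)=0$), and conversely it is \emph{maximal} with this property by the summand count from (b); this identifies $X$ with $P(\Fac X)$, giving (c). For (d), one has $\Fac P(T) \subseteq T$ trivially, and the reverse inclusion follows because every module in $T$ is a quotient of its "$\operatorname{add}(P(T))$-cover", which exists and is surjective precisely because $P(T)$ is $\operatorname{Ext}$-projective and $T$ is functorially finite — this is the approximation-theoretic heart of the argument.

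I expect the main obstacle to be step (b): proving that the indecomposable $\operatorname{Ext}$-projectives of $T$ are finite in number and assemble into a module with exactly $|T$-simples$|$ summands requires the Bongartz-style completion and the identification of $\tau$-rigidity with $\operatorname{Ext}$-vanishing over $\Fac X$, which is really the technical core of \cite{AIR}. Since the paper only needs the statement and not a new proof, I would in fact simply write: this is \cite[Theorem 2.7]{AIR}, and refer the reader there for the proof, using the sketch above only as orientation.
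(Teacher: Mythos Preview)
The paper does not prove this theorem at all; it is stated in the preliminaries as a background result and attributed to \cite[Theorem 2.7]{AIR} with no argument given. Your final recommendation---to simply cite \cite{AIR} and refer the reader there---is exactly what the paper does, so your proposal is aligned with the paper's treatment; the sketch you provide is extra content the paper omits.
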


\section{Fringed Algebras}

\begin{definition} For a gentle algebra $A=kQ/I$, we call a vertex of $Q$ \emph{defective} if it does not have exactly two incoming arrows and two outgoing arrows.

\end{definition}

Our strategy is to define a larger quiver $\hat Q$ containing $Q$, and an
ideal $\hat I$ in $k\hat Q$ containing $I$, such that:
\begin{itemize}
  \item all the vertices of
    $Q$ will be non-defective in $\hat Q$, and
  \item $k\hat Q/\hat I$ is still a gentle algebra.
\end{itemize}
    We call this process
\emph{fringing}.  We begin by describing $\hat Q$.  

\begin{definition}
  For a gentle algebra $A=kQ/I$, let $\hat Q$ be the quiver obtained by
  adding up to two new vertices with arrows pointing towards $v$ and up to
  two new vertices with arrows from $v$, for each defective vertex $v$ of
  $Q$, such that in $\hat Q$, none of the original vertices of $Q$ are
  defective anymore.  We call $\hat Q$ the \emph{fringed quiver} of $Q$, and
  $\hat Q_0 \setminus Q_0$ the \emph{fringe vertices}.
\end{definition}
  
  We now define $\hat I$.  

\begin{def-prop}
  Let $A=kQ/I$ be a gentle algebra.  Let $\hat I$ be the ideal of $k\hat Q$
  which contains $I$, such that $\hat A=k\hat Q/\hat I$ is a gentle algebra.
  $\hat I$ is well-defined up to relabelling the vertices of $k\hat Q$.
  We call $\hat I$ the \emph{fringed ideal} and $\hat A$ the \emph{fringed algebra}.
  \end{def-prop}

The well-definedness of $\hat I$ is immediate from the fact that the
relations added to $\hat I$ consist of certain length two paths
through vertices that were defective in $Q$, and there is no choice about
which relations to add up to relabelling the fringe vertices.  
It is obvious that $A$ is a subalgebra of $\hat{A}$.  Also,
$A= \hat A/(e_F)$, where $e_F= \sum_{f\in \hat{Q}_0\setminus Q_0} e_f$, is the sum of all
the idempotents associated to the fringe vertices of $\hat{Q}$.
We refer to $e_F$ as the \emph{fringe idempotent} of $\hat{A}$.

\begin{example}

  Let $Q$ be the quiver below consisting of the black vertices and the arrows
  between them.  
  All the vertices of $Q$ are defective. The corresponding fringed quiver
  $\hat{Q}$ is obtained by adding the white vertices and arrows, with
  the result that every black vertex has exactly two incoming and two outgoing arrows.

\begin{align*}
 \begin{CD}
    @. \circ @. \circ @. \\
    @. @A{\gamma_1}AA  @A{\gamma_2}AA @.\\
  \circ @>{\theta_1}>> \bullet_1 @>{\boldsymbol\alpha_1}>> \bullet_2 @>{\theta_2}>> @. \circ\\
  @.  @A{\boldsymbol\beta_1}AA  @A{\boldsymbol\beta_2}AA @. \\
  \circ @>{\theta_3}>> \bullet_3 @>{\boldsymbol\alpha_2}>> \bullet_4 @>{\theta_4}>> @. \circ\\
  @.  @A{\gamma_3}AA @A{\gamma_4}AA @.\\
   @. \circ @. \circ @. \\
 \end{CD}
\end{align*}\\

If we consider the admissible ideal $I$ in $kQ$ generated by $\alpha_1\beta_1$ and $\beta_2\alpha_2$, we have that $A= kQ/I$ is a gentle algebra. The corresponding ideal $\hat{I}$ in $k\hat Q$ is generated by $\{\beta_2\alpha_2, \alpha_1\beta_1, \gamma_2\alpha_1, \alpha_2\gamma_3, \theta_2\beta_2, \beta_1\theta_3, \theta_4\gamma_4, \gamma_1\theta_1\}$.  Note that we could have swapped the roles of, for example,
$\gamma_3$ and $\theta_3$ in $\hat I$, but that is equivalent to relabelling their sources.  The associated fringed algebra is $\hat{A}=k\hat{Q}/\hat{I}$.
\end{example} 

The previous example admits a natural generalization, which is closely related to the work of McConville \cite{McC}.

\begin{example}\label{gen-ex}
 Let $Q_{k,n}$ be the following quiver, which is an orientation of a $k$ by $n-k$ grid:
 \begin{align*}
 \begin{CD}
\bullet_{1,1} @>{\boldsymbol\alpha_{1,1}}>> \bullet_{1,2} @>{\boldsymbol\alpha_{1,2}}>> \cdots @>{\boldsymbol\alpha_{1,n-k-1}}>> \bullet_{1,n-k} \\
  @A{\boldsymbol\beta_{1,1}}AA  @A{\boldsymbol\beta_{1,2}}AA @. @A{\boldsymbol\beta_{1,n-k}}AA \\
 \bullet_{2,1} @>{\boldsymbol\alpha_{2,1}}>> \bullet_{2,2} @>{\boldsymbol\alpha_{2,2}}>> \cdots @>{\boldsymbol\alpha_{2,n-k-1}}>> \bullet_{2,n-k} \\
 @A{\boldsymbol\beta_{2,1}}AA  @A{\boldsymbol\beta_{2,2}}AA @. @A{\boldsymbol\beta_{2,n-k}}AA \\
 \vdots @. \vdots @. \ddots @. \vdots\\
  @A{\boldsymbol\beta_{k-1,1}}AA  @A{\boldsymbol\beta_{k-1,2}}AA @. @A{\boldsymbol\beta_{k-1,n-k}}AA \\
 \bullet_{k,1} @>{\boldsymbol\alpha_{k,1}}>> \bullet_{k,2} @>{\boldsymbol\alpha_{k,2}}>> \cdots @>{\boldsymbol\alpha_{k,n-k-1}}>> \bullet_{k,n-k} \\ 
 \end{CD}
\end{align*}\\
Consider the ideal $I$ generated by all relations of the form $\alpha\beta$ and $\beta\alpha$.

The fringed quiver $\hat{Q}_{k,n}$ is obtained by adding the white vertices
and incident arrows, so that none of the black vertices is defective:
 \begin{align*}
 \begin{CD}
@. \circ @. \circ @. \cdots @. \circ \\
@. @A{\beta_{0,1}}AA  @A{\beta_{0,2}}AA @. @A{\beta_{0,n-k}}AA \\
\circ @>{\alpha_{1,0}}>> \bullet_{1,1} @>{\boldsymbol\alpha_{1,1}}>> \bullet_{1,2} @>{\boldsymbol\alpha_{1,2}}>> \cdots @>{\boldsymbol\alpha_{1,n-k-1}}>> \bullet_{1,n-k} @>{\alpha_{1,n-k}}>> \circ \\
@.  @A{\boldsymbol\beta_{1,1}}AA  @A{\boldsymbol\beta_{1,2}}AA @. @A{\boldsymbol\beta_{1,n-k}}AA \\
\circ @>{\alpha_{2,0}}>>  \bullet_{2,1} @>{\boldsymbol\alpha_{2,1}}>> \bullet_{2,2} @>{\boldsymbol\alpha_{2,2}}>> \cdots @>{\boldsymbol\alpha_{2,n-k-1}}>> \bullet_{2,n-k} @>{\alpha_{2,n-k}}>> \circ \\
@. @A{\boldsymbol\beta_{2,1}}AA  @A{\boldsymbol\beta_{2,2}}AA @. @A{\boldsymbol\beta_{2,n-k}}AA \\
 \vdots @. \vdots @. \vdots @. \ddots @. \vdots @. \vdots \\
@.  @A{\boldsymbol\beta_{k-1,1}}AA  @A{\boldsymbol\beta_{k-1,2}}AA @. @A{\boldsymbol\beta_{k-1,n-k}}AA \\
\circ @>{\alpha_{k,0}}>>  \bullet_{k,1} @>{\boldsymbol\alpha_{k,1}}>> \bullet_{k,2} @>{\boldsymbol\alpha_{k,2}}>> \cdots @>{\boldsymbol\alpha_{k,n-k-1}}>> \bullet_{k,n-k} @>{\alpha_{k,n-k}}>> \circ \\ 
@. @A{\beta_{k,1}}AA  @A{\beta_{k,2}}AA @. @A{\beta_{k,n-k}}AA \\
@. \circ @. \circ @. \cdots @. \circ @. @. @. .\\
 \end{CD}
\end{align*}\\
The new ideal $\hat{I}$ is generated by all relations of the form $\alpha\beta$ and $\beta\alpha$, including the new arrows. 
\end{example}

  This class of examples is sufficiently complicated to illustrate most of the
  phenomena which will be of interest to us in this paper.  We will therefore
  sometimes draw examples of string modules in a shape as in Example \ref{gen-ex}, suppressing the underlying grid-shaped quiver, and drawing only the strings.  

\begin{definition} For a string $X$ in $Q$, we write $\cohook(X)$ for the result of adding cohooks to both ends of $X$ in the fringed quiver $\hat{Q}$. We
  call this the cohook completion of $X$.  
\end{definition}
  
The diagram of $\cohook(X)$ is illustrated as follows:

\begin{center}
\begin{tikzpicture}[scale=0.9]
\draw  [-,decorate,decoration=snake] (0,0) --(2,0);
\node at (1,0.3) {$X$};
\draw [->] (-0.05,0) -- (-0.45,-0.5);
\node at (-0.1,-0.4) {$\beta$};
\draw [->] (-1,0) -- (-0.5,-0.5);
\draw [dotted] (-1.5, 0.5) -- (-0.5,-0.5);
\draw [->] (-2,1) -- (-1.5, 0.5);
\node at (4,0.3) {};
----
\draw [->] (2.05,0) -- (2.45,-0.5);
\node at (2.1,-0.4) {$\alpha$};
\draw [->] (3,0) -- (2.5,-0.5);
\draw  [dotted] (3.5,0.5) -- (3,0);
\draw [->] (4,1) -- (3.5,0.5);
\node at (-2,0.3) {};
\end{tikzpicture}
\end{center}

Here we refer to the arrows $\alpha$ and $\beta$ adjacent to $X$ as the \emph{shoulders} of $\cohook(X)$ and the sequence of direct arrows on the right and the sequence of inverse arrows on the left as the \emph{arms} of $\cohook(X)$. 

In the following proposition we give the description of $\tau_{\hat A}(X)$ for $X$ a string in mod ${A}$.

\begin{proposition}\label{tau-cohook} For a pair of strings $X$ and $Y$ in $A$, we have the following:
\begin{enumerate}
\item $\tau_{\hat A}Y= \cohook(Y)$;
\item  $\Ext^1 _{A}(Y,X) \simeq \Ext^1 _{\hat{A}}(Y,X)$.
\end{enumerate}
\end{proposition}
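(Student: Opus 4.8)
The plan is to treat the two parts in turn; part (1) carries the substance, while part (2) is a formal consequence of the fact that $A$ is obtained from $\hat A$ by killing the idempotent $e_F$. For part (1), the key observation is that since $Y$ is a string in $A$, its endpoints $s(Y)$ and $e(Y)$ lie in $Q_0$, and by construction every vertex of $Q$ is non-defective in $\hat Q$, i.e.\ has exactly two incoming and two outgoing arrows. I will first show that a cohook can always be added to $Y$ at a non-defective endpoint (inside $\hat Q$). Assume $Y=\gamma_n\cdots\gamma_1$ has positive length and let $b_1,b_2$ be the two arrows of $\hat Q$ with source $s(Y)$. If $\gamma_1$ is direct, say $\gamma_1=b_1$, then $Yb_2^{-1}$ is again a string: the only new two-arrow window is $(b_2^{-1},b_1)$, which is of mixed orientation, so neither (A1) nor (A2) can fail. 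If $\gamma_1=c^{-1}$ is inverse, then among the two length-two paths $b_1c$ and $b_2c$, axioms (S2) and (G2) for $\hat A$ force exactly one to lie in $\hat I$ and exactly one to lie outside; adjoining the inverse of the latter produces a string. In either case $Y$ extends at its start by an inverse arrow, so a cohook can be added there (and it is unique by Lemma \ref{uniqueadd}); the mirror argument at $e(Y)$, using the two arrows with target $e(Y)$ and extension by a direct arrow, gives a cohook at that end too, and when $Y=e_v$ has length zero the vertex $v$ has two incoming arrows, so both cohooks can be added. Having checked that a cohook can be added at both ends of $Y$, Theorem \ref{tau-th} then yields $\tau_{\hat A}Y=\cohook(Y)$ directly, with no hook-removal. (Along the way one should note that $\hat A$ is finite-dimensional --- each fringe vertex is incident to a single arrow, so $\hat I$ stays admissible --- which ensures the arms of the cohook terminate and $\cohook(Y)$ is a bona fide string module.)

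For part (2), I will use that $A=\hat A/(e_F)$, so that an $A$-module is precisely an $\hat A$-module annihilated by $e_F$, and restriction of scalars along $\hat A\twoheadrightarrow A$ is exact and fully faithful. The claim is that this functor induces an isomorphism $\Ext^1_A(Y,X)\to\Ext^1_{\hat A}(Y,X)$. For surjectivity: given a short exact sequence $0\to X\to E\to Y\to 0$ of $\hat A$-modules with $X$ and $Y$ coming from $A$, for $x\in E$ the image of $e_Fx$ in $Y$ is $e_F\cdot(\text{image of }x)=0$, so $e_Fx\in X$, whence $e_Fx=e_F(e_Fx)\in e_FX=0$; thus $e_FE=0$ and the whole sequence lives in $\modu A$. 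For injectivity: if an extension of $A$-modules splits over $\hat A$, the section is automatically $A$-linear, so it splits over $A$ as well. Additivity for the Baer sum is standard. Hence $\Ext^1_A(Y,X)\simeq\Ext^1_{\hat A}(Y,X)$.

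The main obstacle is the bookkeeping in part (1): verifying that a cohook can always be added at a non-defective vertex requires splitting on whether the arrow of $Y$ adjacent to that endpoint is direct or inverse and, in the inverse case, reading off from the gentleness axioms which of the two candidate length-two paths belongs to $\hat I$. Once this is settled, part (1) follows from Butler--Ringel (Theorem \ref{tau-th}) and part (2) is purely formal.
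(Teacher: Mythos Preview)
Your proof is correct and follows essentially the same approach as the paper's: for (1) you invoke Butler--Ringel (Theorem \ref{tau-th}) after checking cohooks can be added at both ends, and for (2) you use $A=\hat A/(e_F)$. The paper's proof is much terser---it simply asserts that cohooks can be added because $Y$ is an $A$-module, and cites the idempotent quotient for (2) without spelling out the Yoneda argument---so your version fills in detail the paper leaves implicit, but the strategy is identical.
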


\begin{proof}
\begin{enumerate}
\item Because $Y$ is an $A$-module, it is possible to add
  cohooks to both ends of it when we think of it inside $\hat Q$.  Therefore, Theorem \ref{tau-th} tells us
  that $\tau_{\hat A}Y$ is obtained by adding cohooks to both ends of $Y$.

\item Recall that $e_F$ is the fringe idempotent in $\hat{A}$, and that
  $A= \hat{A}/(e_F)$. Hence,  $\Ext^1 _{A}(Y,X) \simeq \Ext^1 _{\hat{A}}(Y,X)$.
\end{enumerate}
\end{proof}

The following lemma relates $\tau_A Y$ and $\tau_{\hat A}Y$.  

\begin{lemma} \label{sub-sub} Let $Y$ be a string for $Q$.  
Then $\tau_{A}Y$ is a submodule of $\tau_{\hat A}Y$.  
\end{lemma}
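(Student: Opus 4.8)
The plan is to compute $\tau_A Y$ and $\tau_{\hat A}Y$ from the Butler--Ringel rule (Theorem~\ref{tau-th}) and compare them endpoint by endpoint. By Proposition~\ref{tau-cohook}(1) we have $\tau_{\hat A}Y=\cohook(Y)$, the string obtained by adding a cohook at each end of $Y$ inside $\hat Q$; this is possible precisely because the endpoints of $Y$ lie in $Q_0$ and hence are non-defective in $\hat Q$. On the other hand Theorem~\ref{tau-th} produces $\tau_A Y$ from $Y$ by adding a cohook in $Q$ at each end where one can, and removing a hook at the remaining ends (if at some end neither is possible then $M(Y)$ is $A$-projective, $\tau_A Y=0$, and there is nothing to prove, so I assume otherwise). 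I would then write down a factorization $\cohook(Y)=P\,(\tau_A Y)\,R$, where $R$ is the substring split off near $s(Y)$ and $P$ the one split off near $e(Y)$ in passing from $\cohook(Y)$ to $\tau_A Y$, and check that $(P,\tau_A Y,R)$ is a \emph{submodule factorization} of $\cohook(Y)$ viewed as a string in $\hat Q$: that is, $R$ is trivial or begins with a direct arrow and $P$ is trivial or ends with an inverse arrow. Since a submodule factorization induces an inclusion of its middle factor, this produces an embedding $M(\tau_A Y)\hookrightarrow M(\cohook(Y))=M(\tau_{\hat A}Y)$ of $\hat A$-modules, and $\tau_A Y$ --- annihilated by the fringe idempotent $e_F$ --- is naturally such a module, which is the assertion.

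The comparison is local at the two endpoints, so I would treat $s(Y)$ in detail, the endpoint $e(Y)$ being symmetric under interchanging direct and inverse arrows. A preliminary point: a path lying entirely inside $Q$ is a string for $(Q,I)$ if and only if it is one for $(\hat Q,\hat I)$, since the extra arrows and relations of the fringed algebra involve only fringe vertices, each incident to a single arrow. Case~1: a cohook can be added at $s(Y)$ in $Q$. Then its initial inverse arrow $\beta$ belongs to $Q_1$, so by Lemma~\ref{uniqueadd} it is also the initial inverse arrow of the cohook added at $s(Y)$ in $\hat Q$; and the maximal direct arm following $\beta^{-1}$ in $Q$ extends in $\hat Q$ by at most one more arrow, necessarily a direct fringe arrow into a degree-one fringe vertex, after which no further extension exists. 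So near $s(Y)$ the cohook in $\hat Q$ equals the one in $Q$ possibly followed by a single direct fringe arrow $\theta$, whence $R$ is empty or $R=\theta$, and in either case it is trivial or begins with a direct arrow. Case~2: no cohook can be added at $s(Y)$ in $Q$. Then the initial inverse arrow $\beta$ of the cohook added in $\hat Q$ must be a fringe arrow leaving $s(Y)$ (were it in $Q_1$ it would witness a cohook in $Q$), and such a $\beta$ points into a degree-one fringe vertex, so the cohook added at $s(Y)$ in $\hat Q$ is just $\beta^{-1}$. Meanwhile $\tau_A Y$ removes a hook there: writing $Y=X\theta\mathcal I$ with $\theta$ a direct arrow of $Q$ and $\mathcal I$ an inverse string, it retains $X$; so $R=\theta\,\mathcal I\,\beta^{-1}$, which begins with the direct arrow $\theta$.

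Running the mirror-image analysis at $e(Y)$ yields $P$, trivial or ending with an inverse arrow, and since the operations at the two ends of $Y$ are performed independently they assemble into the factorization $\cohook(Y)=P\,(\tau_A Y)\,R$, which by the two cases is a submodule factorization --- completing the proof. The degenerate situations (for instance $Y$ with no direct arrow, where a hook cannot be removed directly, or strings of length zero) are handled by the same reasoning once the cohook and hook operations are applied in the order prescribed by Theorem~\ref{tau-th}. The step I expect to be the real obstacle is Case~2, where one must use the precise structure of the fringing --- every fringe vertex carries exactly one arrow --- to see that the cohook produced by $\tau_{\hat A}$ collapses to a lone arrow; the rest is the routine, if slightly fiddly, bookkeeping of which substring is deleted at which end and with which orientation.
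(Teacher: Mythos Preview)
Your proposal is correct and follows essentially the same approach as the paper's proof: both compute $\tau_{\hat A}Y=\cohook(Y)$ and $\tau_A Y$ via Theorem~\ref{tau-th}, then compare endpoint by endpoint to see that $\tau_A Y$ sits inside $\cohook(Y)$ as a substring with the adjacent missing arrows pointing towards it, hence as a submodule. The paper compresses your Case~1/Case~2 analysis into a single sentence (``either adding a cohook in $Q$ \dots\ or subtracting a hook; the result is that $\tau_AY$ is a substring of $\tau_{\hat A}Y$ and at each end, the first arrow missing from $\tau_AY$ points towards $\tau_AY$''), but the content is identical---your version is simply a fleshed-out form of the same argument.
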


In fact, it is known that if $B$ is an algebra, $B/I$ is a quotient algebra,
and $M$ is a $B/I$-module, then $\tau_{B/I}M$ is a submodule of 
$\tau_B M$, see \cite[Lemma VIII.5.2]{ASS}.  We include
a proof of the special case we state because it is an easy consequence
of the combinatorics of strings.  

\begin{proof} This follows from the fact that $\tau_{\hat A}Y$ is
defined by adding cohooks in $\hat Q$ to $Y$, while $\tau_AY$ is defined
by, at each end, either adding a cohook in $Q$ (which will coincide with the
cohook in $Y$ except that it will be missing the final arrow) or subtracting
a hook; the result is that $\tau_AY$ is a substring of $\tau_{\hat A}Y$ and
at each end, the first arrow missing from $\tau_AY$ points towards 
$\tau_AY$.  Thus, $\tau_AY$ is a submodule of $\tau_{\hat A}Y$.  
\end{proof}

\section{Kisses}\label{kiss-admissible}

In this section we give an interpretation of $\Hom_{\hat{A}}(X,\tau_{\hat{A}}Y)$ for a pair of  strings $X$ and $Y$ in $A$. 
Our technique is inspired by work of Schröer \cite{S} and the combinatorial framework developed by McConville \cite{McC}.  


In order to apply Theorem \ref{tau-th} to analyze $\Hom_{\hat A}(X,\tau_{\hat A} Y)$,
we need to introduce a new notion and fix some notation, as follows:

\begin{definition} For a pair of strings $X$ and $Y$, with factorizations $X=(X'',Z,X')$ and $Y=(Y'',Z,Y')$, we say there exists a \emph{kiss from $X$ to $Y$ along $Z$}, provided:
\begin{enumerate}[(i)]
\item $(X'',Z,X')$ is a quotient factorization,
\item $(Y'',Z,Y')$ is a submodule factorization,
  \item all of $X'',X',Y'',Y'$ have strictly positive length.
\end{enumerate}
Such a kiss is denoted by $\keg$.
\end{definition}

We emphasize that the notion of kiss is directed. A kiss from $X$ to $Y$ can be illustrated as follows.  Note that
the four arrows $\gamma$, $\zeta$, $\theta$, and $\sigma$ 
must all appear, and must be oriented as shown.  

\begin{center}
\begin{tikzpicture}[scale=0.50]
\draw  [-,decorate,decoration=snake][thick] (0.15,0) --(2.9,0);
\node at (1.7,0.5) {$Z$};
------
\draw [<-] (0,0) -- (-0.75,1);
\draw [-,decorate,decoration=snake] (-0.8,1.1) -- (-2.5,1.1);
\node at (-0.2,0.8) {$\gamma$};
\node at (-2,2) {$Y^{''} $};
\node at (5.5,2) {$Y^{'} $};
-----
\draw [<-] (3.1,0) -- (3.85,1);
\draw [-,decorate,decoration=snake] (3.9,1.1) -- (5.5,1.1);
\node at (3.2,0.8) {$\theta$};

------
\draw [<-] (-0.75,-1) -- (0,-0.1);
\draw [-,decorate,decoration=snake] (-2.5,-1) -- (-0.75,-1);
\node at (0,-0.8) {$\zeta$};
\node at (-2,-2) {$X^{''}$};
\node at (5.5,-2) {$X^{'}$};
------
\draw [<-] (3.85,-1) -- (3.1,-0.1);
\draw [-,decorate,decoration=snake] (3.85,-1) -- (5.5,-1);
\node at (3.2,-0.8) {$\sigma$};

\end{tikzpicture}
\end{center}

By $\kiss(X, Y)$ we denote the number of kisses from $X$ to $Y$, whereas we use $\Kiss(X, Y)$ for the set of all kisses from $X$ to $Y$, thought of as a set of pairs of triples, as above.
This is a generalization of a notion introduced by McConville \cite{McC}.

\begin{lemma} \label{kiss-a-module}
Let $X$ and $Y$ be strings in $Q$.  
Let $\keg$ be a kiss from $\cohook(X)$ to $\cohook(Y)$. Then \begin{enumerate}
\item $Z$ is a quotient of $X$.  
\item $Z$ is a submodule of $\tau_A Y$.
\end{enumerate}
\end{lemma}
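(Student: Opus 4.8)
The plan is to unwind the definitions carefully. We are given a kiss $\keg$ from $\cohook(X)$ to $\cohook(Y)$, so by definition there is a factorization $\cohook(X) = (X'', Z, X')$ which is a quotient factorization, a factorization $\cohook(Y) = (Y'', Z, Y')$ which is a submodule factorization, and all four of $X'', X', Y'', Y'$ have strictly positive length. First I would recall the structure of $\cohook(X)$: it is $X$ with a cohook added at each end, so reading the diagram, $\cohook(X)$ consists of $X$ in the middle, with, on the left, a sequence of direct arrows (an "arm") ending in one "shoulder" arrow pointing down-left into $X$, and symmetrically on the right. The key point is that the shoulders point \emph{towards} $X$, i.e.\ into the part of $\cohook(X)$ that is the original string $X$.

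For part (1), I want to show $Z$ is a quotient of $X$, i.e.\ $Z$ arises from a quotient factorization of $X$ itself (not just of $\cohook(X)$). Since $(X'', Z, X')$ is a quotient factorization of $\cohook(X)$, condition (ii) of the kiss says $X'' = X''' \theta$ with $\theta \in Q$ (a direct arrow), and condition (i) says $X' = \gamma X''''$ with $\gamma \in Q^{op}$ (an inverse arrow), where both $X''$ and $X'$ have positive length. Now I claim $Z$ must lie entirely inside the $X$-part of $\cohook(X)$. The reason: the leftmost shoulder arrow $\beta$ of $\cohook(X)$ is a direct arrow pointing down into $X$, and to its left is a purely inverse arm. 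If $Z$ extended into the left arm, then $Z$ would have to contain the shoulder $\beta$; but then the factorization $\cohook(X) = (X'' , Z, X')$ would force $X''$ to be a substring of the purely inverse left arm, contradicting $X'' = X'''\theta$ with $\theta$ a direct arrow (a purely inverse string cannot end in a direct arrow). A symmetric argument using the right shoulder $\alpha$ (a direct arrow, hence $X' $ beginning with $\gamma \in Q^{op}$ cannot sit inside the purely-direct right arm without forcing a contradiction — here I need to check the orientations carefully, but it comes down to the same incompatibility). Hence $Z$ is a substring of $X$, and the factorization $X = (X_2, Z, X_1)$ it induces inherits the quotient-factorization conditions from those on $\cohook(X)$ (the arrow $\theta$ at the end of $X''$ and the arrow $\gamma$ at the start of $X'$ are still there, now as part of $X$ or as the first arrows of $X_2, X_1$), possibly with $X_1$ or $X_2$ of length zero. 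So $Z$ is a quotient of $X$, by the remark following the definition of quotient factorization.

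For part (2), I want to show $Z$ is a submodule of $\tau_A Y$. By Proposition \ref{tau-cohook}(1), $\tau_{\hat A} Y = \cohook(Y)$, and the factorization $(Y'', Z, Y')$ exhibits $Z$ as a submodule of $\cohook(Y) = \tau_{\hat A}Y$. By Lemma \ref{sub-sub}, $\tau_A Y$ is a submodule of $\tau_{\hat A} Y$, and moreover — reading the proof of that lemma — $\tau_A Y$ is a substring of $\tau_{\hat A}Y = \cohook(Y)$ obtained by, at each end, possibly deleting some arrows, with the property that the first deleted arrow at each end points \emph{towards} $\tau_A Y$. So to conclude that $Z$ sits inside $\tau_A Y$, I need to show $Z$ does not stray into the deleted portions at either end. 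This is where conditions on $Y'', Y'$ come in: the submodule factorization conditions give $Y'' = Y'''\theta'$ with $\theta' \in Q^{op}$ and $Y' = \gamma' Y''''$ with $\gamma' \in Q$, both of positive length. The arrows immediately flanking $Z$ inside $\cohook(Y)$ are thus $\theta'$ (inverse, at the left, part of $Y''$) and $\gamma'$ (direct, at the right, part of $Y'$). Combined with the fact that the arrow deleted from each end of $\cohook(Y)$ to form $\tau_A Y$ points toward the interior, the positivity of $Y'', Y'$ forces the deleted arrows to be strictly outside $Z$: if the deletion at, say, the left end reached into $Z$, it would have to consume $\theta'$ and hence all of $Y''$, contradicting the orientation mismatch (the deleted arrow points toward $\tau_A Y$, but $\theta'$ as the last arrow of $Y''$ points the other way). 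Hence $Z$ is a substring of $\tau_A Y$, and the submodule-factorization data restricts to show $Z$ is a submodule of $\tau_A Y$.

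I expect the main obstacle to be the bookkeeping of arrow orientations in the two boundary arguments — precisely pinning down, at each of the four "shoulder/deletion" sites, why the positive-length hypotheses in the definition of a kiss rule out $Z$ overrunning the relevant boundary. The conceptual content is small (the shoulders and the deleted hooks all "point inward," while the factorizations force specific arrows of the opposite flavor to flank $Z$), but writing it cleanly requires carefully recalling which of $X', X'', Y', Y''$ begins/ends with a direct versus inverse arrow, and matching that against the structure of $\cohook(-)$ and of the hook-removal in Lemma \ref{sub-sub}. Everything else is a direct translation between the language of factorizations and the language of sub/quotient modules, using only Proposition \ref{tau-cohook}, Lemma \ref{sub-sub}, and the definitions.
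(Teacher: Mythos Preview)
Your approach is essentially the same as the paper's: for (1) you argue that the quotient-factorization conditions on $X',X''$ are incompatible with $Z$ straying into the arms of $\cohook(X)$, and for (2) you compare the submodule-factorization conditions on $Y',Y''$ against the shape of the region deleted from $\cohook(Y)$ to obtain $\tau_A Y$.  Part (1) is fine (modulo a harmless slip in your first description of the arms---the left arm is inverse, the right arm is direct, as you correctly use later).

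There is, however, a small gap in your part (2).  You invoke from Lemma~\ref{sub-sub} only the fact that ``the first deleted arrow at each end points toward $\tau_A Y$,'' i.e.\ that the arrow of $F$ adjacent to $\tau_A Y$ is inverse (and dually on the other side).  But $\theta'$, the last arrow of $Y''$, is \emph{also} inverse, so there is no orientation mismatch between these two arrows as you claim.  What you actually need is the stronger statement that \emph{every arrow of $F$ except the one adjacent to $\tau_A Y$ is direct}: then any proper left-prefix of $F$ consists only of direct arrows and cannot end in the inverse arrow $\theta'$, forcing $Y''\supseteq F$ and hence $Z\subseteq \tau_A Y$.  This stronger fact is true, but it is not the content of Lemma~\ref{sub-sub}; it comes out of the explicit two-case analysis (cohook added in $A$ versus hook removed) that the paper carries out directly.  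Once you unpack that case analysis, your argument and the paper's coincide.
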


Note that $Z$ is by definition a factor of $\cohook(X)$; the point of (1) is that
it is really a quotient of $X$ (though not necessarily a proper quotient:
it might be equal to $X$).  Note also that $Z$ is by definition a submodule of $\tau_{\hat A}Y$, but, as depicted in the following example, $Z$ is not necessarily a submodule of $Y$. In this example, the cohooks are drawn dashed. Note that $\alpha$ and $\beta$ belong to the kiss from $\cohook(X)$ to $\cohook(Y)$, but they are not in $Y$:

\begin{center}
\begin{tikzpicture}[scale=0.65]

\draw  [->] (0.1,0) --(1.1,0);
\draw  [->] (1.2,-1) --(1.2,0);
\draw  [->] (1.2,-2) --(1.2,-1.03);
\draw [->] (1.2,-2) -- (2.2,-2);
\draw [-,decorate,decoration=snake] (2.25,-2) -- (3.5,-2);
\draw [->] (3.6,-2) -- (4.6,-2);
\draw [->] (4.7,-2) -- (5.7,-2);
\draw [->] (5.77,-2) -- (6.86,-2);

\node at (4,-1) {$X$};
-----
\draw  [->,dashed] (0.1,0) --(0.1,1);
\draw  [->,dashed] (-2,1) --(-0.1,1);

\draw [->,dashed] (6.9,-2) -- (7.9,-2);
\draw [->,dashed] (8,-3.5) -- (8,-2.1);

------
\draw  [->] (1.1,-2.1) --(1.1,-1.1);
\draw [->] (1.15,-2.08) -- (2.2,-2.08);
\draw [-,decorate,decoration=snake] (2.25,-2.1) -- (3.5,-2.1);
\draw [->] (3.6,-2.08) -- (4.6,-2.08);
\draw [->] (4.6,-3.2) -- (4.6,-2.2);


\node at (2,-3.5) {$Y$};

-----
\draw  [->,dashed] (1.07,-1) --(1.07,-0.1);
\draw  [->,dashed] (0.05,-0.1) --(1.05,-0.1);
\draw  [->,dashed] (-1.7,-0.1) --(-0.1,-0.1);

\draw [->,dashed] (4.7,-3.2) -- (5.7,-3.2);
\draw [->,dashed] (5.8,-4.4) -- (5.8,-3.3);

\node at (0.6,0.2) {$\alpha$};
\node at (1.45,-0.5) {$\beta$};

\end{tikzpicture}
\end{center}

Note that in this example we are following the convention of considering our strings to lie in a grid quiver as in Example \ref{gen-ex}, without drawing the underlying grid.

\begin{proof} (1) If the start of $Z$ were before the start of $X$ in 
  $\cohook(X)$ then $X'$ would consist only of direct arrows, which is contrary
  to the definition of a kiss.  A similar argument shows that the end of
  $Z$ cannot be later than the end of $X$, so $Z$ is a substring of $X$.
  Because of the direction of the arrows just outside $Z$ in $\cohook(X)$
  (which are part of the definition of a kiss), $Z$ is a quotient of $X$.

  (2) Recall from Theorem \ref{tau-th} that $\tau_AY$ is obtained from $Y$ by
  adding cohooks if possible, and otherwise removing hooks.  

  Suppose that it was possible to add a cohook to the start of $Y$ in
  $A$.  In this case, both $\tau_AY$ and $\tau_{\hat A}Y$ add a cohook at the
  start of $Y$.  The only difference is that $\tau_{\hat A}Y$ will include
  one additional inverse arrow from a fringe vertex.  By the definition of
  a kiss, $Z$ cannot include that additional arrrow.  

  Now suppose that it was not possible to add a cohook at the start of $Y$.
  In this case, $\tau_AY$ was obtained by removing up to the first direct
  arrow of $Y$.  By the definition of a kiss, there is a direct arrow before
  $Z$, so what was removed does not intersect $Z$, and thus the start of
  $Z$ is not before the start of $\tau_AY$.  

  We have therefore established that the start of $Z$ is after the start of
  $\tau_AY$.  Similarly, the end of $Z$ is before the end of $\tau_AY$.
  Because of the directions of the arrows just outside $Z$ in $\tau_AY$,
  $Z$ is a submodule of $\tau_AY$.  
\end{proof}

By the previous lemma, a kiss from $\cohook(X)$ to $\cohook(Y)$ determines a 
non-zero morphism from $X$ to $\tau_AY$ and from $X$ to $\tau_{\hat A}Y$.  

\begin{theorem}\label{basis}

\begin{enumerate} \item The elements of $\Hom(X,\tau_{\hat A}Y)$ corresponding to kisses from $\cohook(X)$
to $\cohook(Y)$ form a basis.  
\item The elements of $\Hom(X,\tau_AY)$ corresponding to kisses from
$\cohook(X)$ to $\cohook(Y)$ form a basis.
\end{enumerate}
\end{theorem}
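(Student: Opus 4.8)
The plan is to prove both parts together, using the graph-map description of homomorphisms between string modules (Theorem \ref{Thm-graph-maps}) applied in the fringed algebra $\hat A$. First I would establish (1). By Proposition \ref{tau-cohook}(1), $\tau_{\hat A}Y = \cohook(Y)$, so $\Hom_{\hat A}(X,\tau_{\hat A}Y) = \Hom_{\hat A}(X,\cohook(Y))$, and Theorem \ref{Thm-graph-maps} gives a basis indexed by admissible pairs in $\mathcal{A}(X,\cohook(Y))$: that is, pairs consisting of a quotient factorization $(F_1,E_1,D_1)$ of $X$ and a submodule factorization $(F_2,E_2,D_2)$ of $\cohook(Y)$ with $E_1\sim E_2$. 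The heart of the argument is a bijection between such admissible pairs and kisses from $\cohook(X)$ to $\cohook(Y)$. Given a kiss $\keg$, the middle string $Z$ is a quotient of $X$ by Lemma \ref{kiss-a-module}(1) (hence is realized by a genuine quotient factorization of $X$, using that the arrows $\zeta,\sigma$ bounding $Z$ in $\cohook(X)$ are oriented correctly), and $Z$ is by definition a submodule factorization datum of $\cohook(Y)$; so a kiss yields an admissible pair with common middle term $Z$. Conversely, given an admissible pair with common middle term $E$, I must check that $E$, viewed inside $\cohook(X)$ and inside $\cohook(Y)$, is flanked on both sides by arrows of the orientations required in the definition of a kiss, and that the four flanking substrings $X'',X',Y'',Y'$ all have positive length; the positive-length condition on the $X$-side will follow from the fact that $\cohook(X)$ extends $X$ by a genuine cohook (an inverse arrow then direct arrows at the start, a direct arrow then inverse arrows at the end), so the quotient factorization of $X$, when transported to $\cohook(X)$, is always strictly interior; the $Y$-side is symmetric. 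The only graph maps of $\Hom_{\hat A}(X,\cohook(Y))$ that are excluded are thus those factoring through an endpoint region of a cohook, but one checks these do not occur: a quotient factorization of $X$ cannot have its image reaching into the arms of $\cohook(X)$ (arms are all direct or all inverse, violating the quotient/submodule shape), and similarly on the $Y$ side. This shows the indexing sets match, proving (1).

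For (2), by Proposition \ref{tau-cohook}(1) and Lemma \ref{sub-sub}, $\tau_A Y$ is a submodule of $\tau_{\hat A}Y=\cohook(Y)$, with the inclusion $\iota\colon\tau_AY\hookrightarrow\cohook(Y)$ a string-module inclusion whose complement, at each end, begins with an arrow pointing towards $\tau_AY$. Lemma \ref{kiss-a-module}(2) tells us that for each kiss, the associated middle string $Z$ is a submodule of $\tau_AY$, so the graph map $X\to\cohook(Y)$ factors (as a graph map) through $\tau_AY$; call the resulting maps in $\Hom_A(X,\tau_AY)$ the kiss maps. I would argue that post-composition with $\iota$ gives an injection $\Hom_A(X,\tau_AY)\hookrightarrow\Hom_{\hat A}(X,\cohook(Y))$ — this is because $\iota$ is a monomorphism, so $\Hom(X,\iota)$ is injective — and that under this injection the kiss maps of $\Hom_A(X,\tau_AY)$ go precisely to the kiss maps of $\Hom_{\hat A}(X,\cohook(Y))$, which form a basis of the target by (1). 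Hence the kiss maps are linearly independent in $\Hom_A(X,\tau_AY)$. For spanning, I would count: it suffices to show $\dim\Hom_A(X,\tau_AY)\le\kiss(\cohook(X),\cohook(Y))$. Here I would again invoke Theorem \ref{Thm-graph-maps}, this time for $\Hom_A(X,\tau_AY)$, and exhibit an injection from the admissible pairs in $\mathcal{A}_A(X,\tau_AY)$ into the kisses from $\cohook(X)$ to $\cohook(Y)$: an admissible pair is a quotient factorization of $X$ and a submodule factorization of $\tau_AY$ sharing a middle term $E$; composing the latter with $\iota$ exhibits $E$ as a substring of $\cohook(Y)$, and I must check this substring, together with the quotient factorization of $X$ transported to $\cohook(X)$, assembles into a bona fide kiss — in particular that the flanking arrows have the right orientation (on the $Y$ side this uses precisely the property from Lemma \ref{sub-sub} that the first omitted arrow points towards $\tau_AY$, and that a submodule factorization of $\tau_AY$ is strictly interior after adding the cohook material) and positive length. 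Injectivity of this assignment is clear since the kiss determines $E$ and the factorization data. Combining independence and the dimension bound gives that the kiss maps form a basis of $\Hom_A(X,\tau_AY)$.

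The main obstacle I anticipate is the careful bookkeeping in the correspondence between admissible pairs and kisses — specifically, verifying the orientation conditions (i) and (ii) in the definition of a kiss and the positive-length condition (iii) at all four flanking substrings, in both the $\hat A$-setting and the $A$-setting. The subtle point on the $Y$ side of part (2) is that $\tau_AY$ can arise either by adding a (short) cohook in $Q$ or by removing a hook, and one must confirm in both cases that a submodule factorization of $\tau_AY$, when the cohook completion data of $\cohook(Y)$ is reattached, produces flanking arrows of the correct type with positive length — this is exactly the content distilled in the proof of Lemma \ref{sub-sub}, so I would lean on that lemma rather than redo the case analysis. Everything else (injectivity of $\Hom(X,\iota)$, matching of bases, the final dimension count) is formal once the combinatorial correspondence is pinned down.
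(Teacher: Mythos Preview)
Your proposal is correct and follows the same approach as the paper: both rely on the graph-map basis of Theorem~\ref{Thm-graph-maps} together with Lemma~\ref{kiss-a-module} (and, for part~(2), Lemma~\ref{sub-sub}) to match admissible pairs with kisses. The paper's proof is a two-line sketch that leaves the bijection between admissible pairs and kisses implicit, whereas you spell out that correspondence and the positive-length/orientation checks in detail; your argument is exactly the unpacking that the paper's sketch invites.
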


\begin{proof} By the description of morphisms between string modules in
terms of graph maps, Lemma \ref{kiss-a-module} implies that the kisses
define a linearly independent collection inside each of the Hom spaces, and also span the Hom spaces. 
\end{proof}

\begin{theorem}\label{dims} Let $X$ and $Y$ be strings in $Q$.  
$$\kiss(\cohook(X),\cohook(Y))=\dim \Hom_{\hat A}(X,\tau_{\hat A}Y)=\dim
\Hom_A(X,\tau_AY).$$
\end{theorem}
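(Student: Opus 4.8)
The plan is to read the two equalities straight off Theorem \ref{basis}, so the proof is essentially a corollary. First I would recall how a kiss produces a morphism: given a kiss $\keg$ from $\cohook(X)$ to $\cohook(Y)$, Lemma \ref{kiss-a-module} exhibits $Z$ as a quotient of $X$ and as a submodule of $\tau_A Y$, while by the definition of kiss $Z$ is a submodule of $\tau_{\hat A}Y$ as well. Hence the kiss supplies an admissible pair in $\mathcal{A}(X,\tau_A Y)$ and one in $\mathcal{A}(X,\tau_{\hat A}Y)$, and via Theorem \ref{Thm-graph-maps} a graph map in $\Hom_A(X,\tau_A Y)$ and in $\Hom_{\hat A}(X,\tau_{\hat A}Y)$ respectively. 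This gives a well-defined assignment from $\Kiss(\cohook(X),\cohook(Y))$ into each of the two Hom spaces.

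Next I would invoke Theorem \ref{basis}. Part (1) says the image of this assignment in $\Hom_{\hat A}(X,\tau_{\hat A}Y)$ is a basis; since a linearly independent family contains no repeated nonzero vectors, the assignment is injective, so its image has exactly $\kiss(\cohook(X),\cohook(Y))$ elements, whence $\dim \Hom_{\hat A}(X,\tau_{\hat A}Y)=\kiss(\cohook(X),\cohook(Y))$. Part (2) gives the same conclusion for $\Hom_A(X,\tau_A Y)$. Stringing the two identities together produces the displayed chain of equalities.

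There is no real obstacle: the substance was already carried out in Lemma \ref{kiss-a-module} and Theorem \ref{basis}. The only point worth stating with a little care is the injectivity of the map sending a kiss to its graph map, but this is automatic from the linear independence asserted in Theorem \ref{basis} (and in any case distinct kisses give distinct factorization data $(X'',Z,X')$, $(Y'',Z,Y')$, hence distinct graph maps). So in the write-up I would simply cite Theorem \ref{basis} and conclude in two lines.
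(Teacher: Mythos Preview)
Your proposal is correct and matches the paper's approach exactly: the paper's proof is the single line ``This theorem is immediate from the stronger Theorem \ref{basis} above.'' Your extra care about injectivity of the kiss-to-graph-map assignment is fine but not strictly needed, since Theorem \ref{basis} already asserts that the kisses index a basis.
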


\begin{proof} This theorem is immediate from the stronger Theorem
\ref{basis} above.
\end{proof}

\begin{proposition}\label{Hom-isomorphism} Let $X$ and $Y$ be strings in $Q$.  
The canonical map from $\tau_{A}Y$ to $\tau_{\hat A}Y$ 
induced by the fact that the former is a submodule of the latter,
induces an isomorphism from $\Hom(X,\tau_{A}Y)$ to $\Hom(X,\tau_{\hat A} Y)$.  
\end{proposition}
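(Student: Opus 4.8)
The strategy is to compare the two Hom spaces via the bases we already have in hand. By Theorem \ref{basis}(1) and (2), both $\Hom(X,\tau_A Y)$ and $\Hom(X,\tau_{\hat A}Y)$ have bases indexed by the \emph{same} set, namely the kisses from $\cohook(X)$ to $\cohook(Y)$. So as a first move I would check that the inclusion $\iota\colon \tau_A Y \hookrightarrow \tau_{\hat A}Y$ from Lemma \ref{sub-sub}, composed with the basis element $f_T\colon X\to \tau_A Y$ attached to a kiss $T$, yields exactly the basis element of $\Hom(X,\tau_{\hat A}Y)$ attached to the \emph{same} kiss $T$. This is essentially a bookkeeping statement: the graph map $f_T$ factors through the common factor $Z$ as (quotient $X\twoheadrightarrow Z$) followed by (inclusion $Z\hookrightarrow \tau_A Y$), and post-composing with $\iota$ just replaces the last step by the inclusion $Z\hookrightarrow \tau_{\hat A}Y$, which is precisely the graph-map description of the kiss $T$ viewed inside $\tau_{\hat A}Y$. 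Here I would lean on the proof of Lemma \ref{sub-sub}, which says $\tau_A Y$ sits inside $\tau_{\hat A}Y$ as a substring that agrees with $\tau_{\hat A}Y$ except for the extreme arrows, and on Lemma \ref{kiss-a-module}(2), which guarantees $Z$ already lies inside $\tau_A Y$, so the factorization through $Z$ makes sense on the $\tau_A Y$ side.

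Once that compatibility is established, the conclusion is formal: the map $\iota_*\colon \Hom(X,\tau_A Y)\to\Hom(X,\tau_{\hat A}Y)$, $f\mapsto \iota\circ f$, carries the basis $\{f_T : T\in\Kiss(\cohook(X),\cohook(Y))\}$ of the source bijectively onto the basis $\{f_T : T\in\Kiss(\cohook(X),\cohook(Y))\}$ of the target, hence is an isomorphism. I would phrase this as: $\iota_*$ sends basis to basis, and a linear map that does so is invertible.

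Alternatively — and this is perhaps the cleaner writeup — one can sidestep the graph-map bookkeeping: $\iota_*$ is injective because $\iota$ is a monomorphism of modules, and by Theorem \ref{dims} the two Hom spaces have equal (finite) dimension, so an injection between them is automatically an isomorphism. This reduces the proposition to the dimension count already proved. I would probably present this two-line argument as the main proof and relegate the "it's literally the same kiss" observation to a remark, or include it only to the extent needed to see injectivity cleanly (injectivity of $\iota_*$ from $\iota$ being mono is immediate and needs no combinatorics at all).

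The only genuine subtlety — the place I'd be most careful — is making sure there is no circularity: Theorem \ref{dims} is deduced from Theorem \ref{basis}, which in turn rests on Lemma \ref{kiss-a-module} and the graph-map basis theorem, none of which use the present proposition, so invoking $\dim\Hom_A(X,\tau_A Y)=\dim\Hom_{\hat A}(X,\tau_{\hat A}Y)$ here is legitimate. Beyond that, the argument is essentially immediate; the "hard part" is purely expository, namely deciding whether to justify surjectivity by the dimension count or by explicitly tracking the kiss bases through $\iota_*$.
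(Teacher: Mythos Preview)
Your proposal is correct and matches the paper's own proof, which simply reads ``This is immediate from Theorem \ref{basis} and Lemma \ref{sub-sub}.'' Your two alternative writeups (tracking the kiss bases through $\iota_*$, or using injectivity of $\iota_*$ plus the dimension equality from Theorem \ref{dims}) are exactly the content being gestured at by that one-line proof.
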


\begin{proof} This is immediate from Theorem \ref{basis} and Lemma
\ref{sub-sub}.\end{proof}

\section{Maximal non-kissing collections}~\label{maximal-non-kissing}

In this section, we show that support $\tau$-tilting modules for a gentle algebra correspond to certain collections of strings in its fringed quiver which can be combinatorially characterized.  

Let $B$ be an algebra with $n$ simples.  
We consider the set of $\tau$-rigid indecomposable
modules together with $n$ formal objects $P_i[1]$, on which we define a 
relation of compatibility.  Two $B$-modules $M$ and $N$ are compatible if 
$\Hom(M,\tau N)=0=\Hom(N,\tau M)$.  A $B$-module $M$ and $P_i[1]$ are
compatible if $\Hom(P_i,M)=0$.  $P_i[1]$ and $P_j[1]$ are always compatible.
There is a bijective correspondence from maximal compatible collections
to basic support $\tau$-tilting modules (see \cite{AIR}): one takes the sum of the modules 
in the collection, and throws away whatever $P_i[1]$ appear.

Let $A=kQ/I$ be a gentle algebra, and let $\hat A=k\hat Q/I$ be its fringed algebra.
For $v$ a vertex of $\hat Q$, define the \emph{injective string} of $v$ to be the 
string obtained by adding to the lazy path at $v$ both maximal sequences of arrows oriented towards $v$ in $\hat Q$.  We denote it $I_v$.  The corresponding $\hat A$-module is an
indecomposable injective module.

We define the set of \emph{long strings} of $\hat Q$ to consist of 
$\{\cohook(X) \mid X\in \str(Q)\} \cup \{I_v \mid v\in Q_0\}$.  
We call these long strings because they run between two fringe vertices,
so they are in a sense maximally long.  

\begin{theorem}~\label{thm-maximal-non-kissing} There is a bijective correspondence 
from 
maximal 
compatible collections of $A$-modules to 
maximal non-kissing collections of long strings of $\hat Q$, induced by the correspondence:
\begin{eqnarray*}
X \in \str(Q) &\to& \cohook(X)\\
P_v[1] \textrm{ for } v\in Q_0 &\to& I_v.
\end{eqnarray*}
\end{theorem}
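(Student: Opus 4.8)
The plan is to deduce the theorem from two assertions: \textbf{(A)} the object-level assignment $M(X)\mapsto\cohook(X)$, $P_v[1]\mapsto I_v$ is a bijection from $\{\tau\text{-rigid indecomposable }A\text{-modules}\}\cup\{P_v[1]:v\in Q_0\}$ onto the set of long strings of $\hat Q$ that do not kiss themselves; and \textbf{(B)} under this assignment two objects are compatible if and only if the corresponding long strings do not kiss, in either direction. Granting (A) and (B) the theorem is immediate: maximality on each side is maximality for inclusion among the pairwise-good finite subsets, and (A) identifies the ground sets while (B) identifies the relations, so maximal compatible collections are carried bijectively to maximal non-kissing collections of long strings.

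For (A), I would first recall that every $\tau$-rigid indecomposable $A$-module is a string module (band modules have self-extensions and so, by Theorem~\ref{ARformula}, are not $\tau$-rigid). The map $X\mapsto\cohook(X)=\tau_{\hat A}X$ (Proposition~\ref{tau-cohook}) is injective and its image meets no $I_v$: no nonzero $A$-module is projective over $\hat A$, since a maximal nonzero path in $\hat A$ starting at a vertex of $Q$ must terminate at a fringe vertex (using that $\hat A$ is finite dimensional with quadratic relations), so $e_F$ annihilates no indecomposable $\hat A$-projective; hence $\tau_{\hat A}^{-1}\tau_{\hat A}X\cong X$, and as $\tau_{\hat A}^{-1}$ kills injectives, $\cohook(X)\cong I_v$ would force $X=0$. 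By definition each long string is some $\cohook(X)$ with $X\in\str(Q)$ or some $I_v$ with $v\in Q_0$; by Theorem~\ref{dims} the string $\cohook(X)$ kisses itself exactly when $\dim\Hom_A(X,\tau_AX)\neq 0$, i.e.\ when $M(X)$ is not $\tau$-rigid; and $I_v$ never kisses itself because its string diagram is a single valley, which admits no quotient factorization with both outer factors of positive length --- that is, $I_v$ can never be the source of a kiss. Assembling these observations yields the bijection of (A) (injectivity on the $I_v$ part being clear from simple socles).

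For (B) there are three cases. If the objects are string modules $M(X),M(Y)$, compatibility is $\Hom_A(X,\tau_AY)=0=\Hom_A(Y,\tau_AX)$, which by Theorem~\ref{dims} is exactly $\kiss(\cohook X,\cohook Y)=0=\kiss(\cohook Y,\cohook X)$. If the objects are $P_v[1]$ and $P_w[1]$, they are compatible by convention, and $I_v,I_w$ never kiss since no injective string is a kiss source. In the mixed case, compatibility of $M(X)$ with $P_v[1]$ means $\Hom_A(P_v,M(X))=0$, equivalently $M(X)_v=0$, equivalently $X$ does not run through $v$. Since $I_v$ is never a kiss source, the only possible kiss is from $\cohook(X)$ to $I_v$; the argument of Lemma~\ref{kiss-a-module} shows its middle factor $Z$ lies inside the core $X$ of $\cohook(X)$, and being a nonzero submodule of the indecomposable injective $I_v$ it contains the socle $S_v$, so such a kiss forces $X$ to run through $v$. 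Conversely, given an occurrence of $v$ in $X$, I would take $Z$ to be the maximal factor of $X$ containing that occurrence and having $v$ as its unique deep (degenerating to $Z=e_v$ when $v$ occurs at a peak); one checks that $Z$ is then a quotient factor of $\cohook(X)$, its two outer factors reaching the fringe and hence of positive length, and --- using condition (G2) to see that each arm of $Z$ is an initial segment of the corresponding arm of $I_v$ --- that $Z$ is a submodule factor of $I_v$ with both outer factors of positive length. This matches occurrences of $v$ in $X$ bijectively with kisses from $\cohook(X)$ to $I_v$, so in particular $\kiss(\cohook X,I_v)=0$ iff $\Hom_A(P_v,M(X))=0$. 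In all three cases compatibility is equivalent to mutual non-kissing, which together with (A) proves the theorem.

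I expect the mixed case of (B) to be the main obstacle: the delicate point is the converse, where one must verify that the arms of the candidate factor $Z$ can only be cut at peaks of $X$ (this is what makes the kiss well defined and gives the bijection with occurrences of $v$), and that gentleness forces those arms to agree with the initial parts of the two arms of $I_v$, so that $Z$ genuinely embeds into $I_v$ with positive-length shoulders. By contrast, the self-kissing analysis of $I_v$, the non-projectivity of $A$-modules over $\hat A$, and the transfer of maximality are routine once the string conventions are pinned down.
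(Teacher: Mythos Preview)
Your proposal is correct and follows essentially the same route as the paper: reduce to checking that compatibility corresponds to non-kissing, then treat the module--module case via Theorem~\ref{dims}, the shift--shift case via the observation that an injective string is never the source of a kiss, and the mixed case by showing that a kiss from $\cohook(X)$ to $I_v$ forces $X$ to pass through $v$ and conversely. You are somewhat more careful than the paper in explicitly verifying part (A) (that $\tau$-rigid indecomposables are string modules, that $X\mapsto\cohook(X)$ is injective, and that no $\cohook(X)$ coincides with an $I_v$), which the paper leaves implicit; and your socle argument for ``kiss $\Rightarrow$ $X$ passes through $v$'' is a clean alternative to the paper's direct combinatorial analysis of common substrings.
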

 
\begin{proof} We have to check that the bijection above takes the 
compatibility relation to the non-kissing relation.  

Two $A$-modules are compatible if and only if the 
corresponding long strings do not kiss, by Theorem \ref{dims}.  

Any two injective strings are compatible, which is consistent with the
fact that $P_i[1]$ and $P_j[1]$ are compatible.  

An $A$-module
$M$ is compatible with $P_v[1]$ if and only if $\Hom_A(P_v,M)=0$, if and
only if $M$ does not pass through vertex $v$.  It suffices to show that 
there is a kiss from $\cohook(M)$ to $I_v$ if and only if $M$ passes through $v$,
and there is never a kiss from $I_v$ to $\cohook(M)$.  We now verify this.  

If $M$
passes through $v$, then in each direction from $v$, 
$\cohook(M)$ eventually leaves
the injective string from $v$ (at the endpoint of $M$ if not before), and
it leaves it along an arrow pointing away from the injective string.  Thus
there is a kiss from $\cohook(M)$ to $I_v$.  

If $M$ does not pass through $v$, then any common substring of $M$ and $I_v$
must consist of arrows all oriented in the same direction, with the arrow of
$I_v$ on either side of the substring also having the same orientation.  
Since this means that the arrows of $I_v$ on either side of the common
substring point in opposite directions, it cannot be a kiss.  The same argument applies to a common substring of an arm of $\cohook(M)$ and $I_v$, unless $v$
lies on an arm of $\cohook(M)$.  But in that case, there is a direction in which $\cohook(M)$ and $I_v$ never separate, so this is not a kiss either.  

Because all the arrows of $I_v$ are oriented towards $v$ in the middle of
the string, it is impossible for there to be a kiss from $I_v$ to any string.   
\end{proof}

\section{Poset of functorially finite torsion classes}\label{torposet}

It is natural to order the functorially finite torsion classes by inclusion.  Since by Theorem \ref{st-tor} there is a bijection between functorially finite torsion classes and support $\tau$-tilting modules, this can also be thought of as a poset structure 
on support $\tau$-tilting modules.  Having shown in the previous section that we can give a combinatorial description of the support $\tau$-tilting modules as maximal non-kissing collections, we proceed in this section to interpret this poset structure on maximal non-kissing collections.  

The following theorem combines a few different results from \cite{AIR}:
Theorem 2.18, the discussion following Definition 2.19, and Corollary
2.34.  

\begin{theorem}[{\cite{AIR}}] \label{agreement}
If $\mathcal T$ and $\mathcal U$ are two functorially finite
torsion classes with $\mathcal T$
properly contained in $\mathcal U$, and with no functorially finite
torsion class properly
between them, then their corresponding maximal compatible collections
are $\mathcal S \cup \{P\}$, $\mathcal S \cup \{R\}$, 
and conversely, given two 
maximal compatible collections $\mathcal S\cup \{P\}$ and 
$\mathcal S\cup\{R\}$, 
they correspond to functorially
finite torsion classes
which form a cover in the poset of torsion classes.
\end{theorem}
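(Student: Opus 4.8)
The plan is to assemble the statement from the three cited results of \cite{AIR}, using the dictionary between maximal compatible collections and basic support $\tau$-tilting modules recalled above. Under this dictionary, a maximal compatible collection of $A$-modules (together with the formal objects $P_i[1]$) corresponds to a \emph{support $\tau$-tilting pair} in the sense of \cite{AIR}, and $\Fac$ of its module part is the associated functorially finite torsion class by Theorem \ref{st-tor}. So the whole statement is really about the combinatorics of support $\tau$-tilting pairs, transported along Theorem \ref{st-tor}.

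First I would recall the mutation operation of \cite{AIR}: given a support $\tau$-tilting pair and a choice of one of its $n$ indecomposable constituents (a module summand, or a formal object $P_i[1]$), there is a well-defined mutated pair that agrees with the original in the other $n-1$ constituents and differs in the chosen one; this operation is an involution and the new constituent is uniquely determined by the rest. In the language of maximal compatible collections this says precisely that a mutation turns $\mathcal S\cup\{P\}$ into $\mathcal S\cup\{R\}$ with $R\neq P$ determined by $\mathcal S$ and $P$; in particular any two collections sharing a common part $\mathcal S$ of size $n-1$ are related by a single mutation.

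Next I would invoke \cite[Theorem 2.18]{AIR} together with the discussion following \cite[Definition 2.19]{AIR}: the Hasse quiver of the poset of functorially finite torsion classes ordered by inclusion coincides with the mutation quiver of support $\tau$-tilting modules, with \emph{left} mutation pointing from the larger torsion class to the smaller one. Combining this with \cite[Corollary 2.34]{AIR}, which guarantees that every indecomposable constituent can be mutated (so the mutation quiver is $n$-regular and every arrow of the Hasse quiver is realized by a mutation), gives both implications at once. For the forward direction, a cover $\mathcal T\subsetneq\mathcal U$ corresponds to a single (left) mutation, hence the two collections are $\mathcal S\cup\{P\}$ and $\mathcal S\cup\{R\}$ for a common $\mathcal S$. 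For the converse, two collections $\mathcal S\cup\{P\}$ and $\mathcal S\cup\{R\}$ are related by a mutation by the previous paragraph, so the corresponding torsion classes are joined by an arrow in the Hasse quiver, i.e. form a cover.

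The one point that needs care, rather than calculation, is matching conventions: one must check that ``maximal compatible collection'' as defined here is genuinely AIR's support $\tau$-tilting pair (including the correct treatment of the formal objects $P_i[1]$ versus the ``negative part'' of the pair), and that the direction of the inclusion order is consistent with AIR's left/right mutation — i.e. that a cover with $\mathcal T$ below corresponds to left mutation and not right mutation. Once that dictionary is pinned down, the rest is bookkeeping.
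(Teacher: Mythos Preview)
Your proposal is correct and matches the paper's approach exactly: the paper does not give its own proof but simply states that the theorem ``combines a few different results from \cite{AIR}: Theorem 2.18, the discussion following Definition 2.19, and Corollary 2.34,'' which are precisely the ingredients you assemble. Your write-up is in fact a careful unpacking of how those three citations fit together, so there is nothing to add.
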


Suppose we have two maximal nonkissing collections of the form 
$\mathcal S \cup \{\cohook(X)\}$ and $\mathcal S \cup \{\cohook(Y)\}$.  Clearly
$\cohook(X)$ and $\cohook(Y)$ kiss; otherwise, this would violate
maximality of the collections.  By Theorem \ref{agreement}, these two
collections 
correspond to a pair of torsion classes which form a cover.  Suppose that
the torsion class corresponding to $\mathcal S \cup \{\cohook(X)\}$
contains the torsion class corresponding to $\mathcal S \cup \{\cohook(Y)\}$.
It follows that $Y$ is also in the first torsion class, while $X$ is 
$\Ext$-projective in the torsion class, so 
$\Hom(Y,\tau_A X)=0$ by \cite[Proposition 5.8]{AS}.  
The kiss(es) between $\cohook(X)$ and $\cohook(Y)$ are therefore from 
$\cohook(X)$ to $\cohook(Y)$. Similarly, if we have two maximal nonkissing collections
$\mathcal S \cup \{\cohook(X)\}$ and $\mathcal S \cup \{I_v\}$, then we see that there is a kiss from $\cohook(X)$ to $I_v$.  
This 
establishes the following theorem.

\begin{theorem}\label{covers} The cover relations in the poset of functorially finite torsion classes of $A$ can be described
in terms of their maximal non-kissing collections as follows: they correspond to pairs of maximal non-kissing collections of the form
$\mathcal S \cup \{C\}, \mathcal S \cup \{D\}$, and 
$\mathcal S\cup \{C\} > \mathcal S\cup \{D\}$ if the kisses go from
$C$ to $D$.  
\end{theorem}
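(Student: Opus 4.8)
The plan is to assemble the bijections and structural results already in place and then show that the direction of a kiss encodes the direction of a cover. The relevant inputs are: Theorem~\ref{thm-maximal-non-kissing}, giving the bijection between maximal non-kissing collections of long strings of $\hat Q$ and maximal compatible collections of $A$-modules (hence, via \cite{AIR} and Theorem~\ref{st-tor}, with functorially finite torsion classes); Theorem~\ref{agreement}, which says a cover in the poset of torsion classes corresponds to two maximal compatible collections differing in exactly one member; and Theorem~\ref{dims}, identifying $\kiss(\cohook(X),\cohook(Y))$ with $\dim\Hom_A(X,\tau_A Y)$.

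First I would take a cover $\mathcal T > \mathcal U$ of functorially finite torsion classes and translate it, via these bijections, into a pair of maximal non-kissing collections $\mathcal S\cup\{C\}$ (for $\mathcal T$) and $\mathcal S\cup\{D\}$ (for $\mathcal U$), with $C\ne D$ long strings. The first observation is that $C$ and $D$ must kiss: since each of $\mathcal S\cup\{C\}$ and $\mathcal S\cup\{D\}$ is non-kissing there is no kiss within $\mathcal S$, none between $C$ and $\mathcal S$, and none between $D$ and $\mathcal S$, so if $C$ and $D$ also failed to kiss then $\mathcal S\cup\{C,D\}$ would be a non-kissing collection strictly larger than $\mathcal S\cup\{C\}$, contradicting maximality. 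Hence there is a kiss, in one of the two directions, and the task is to decide which.

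Next I would split according to the shape of $\{C,D\}$, noting first that they cannot both be injective strings (two injective strings never kiss, so the maximality argument above would fail). In the generic case $C=\cohook(X)$, $D=\cohook(Y)$ with $X,Y\in\str(Q)$: writing $\mathcal T=\Fac X'$ for the corresponding support $\tau$-tilting module $X'$, with $X$ an indecomposable summand of $X'$, the module $X$ is $\Ext$-projective in $\mathcal T$ while $Y$ lies in $\mathcal U\subseteq\mathcal T$; the Auslander--Smal\o\ result \cite[Proposition~5.8]{AS} (equivalently, the fact from \cite{AIR} that an indecomposable summand of a support $\tau$-tilting module receives no nonzero map into its $\tau$-translate from any object of its $\Fac$) gives $\Hom_A(Y,\tau_A X)=0$, so by Theorem~\ref{dims} there is no kiss from $\cohook(Y)$ to $\cohook(X)$, whence the kiss between $C$ and $D$ runs from $C$ to $D$. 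In the mixed case, exactly one of $C,D$ is an injective string $I_v$; it is necessarily $D$, for if $C=I_v$ then, writing $M$ for the sum of the modules occurring in $\mathcal S$, we would have $\mathcal T=\Fac M\subseteq\Fac(M\oplus X)=\mathcal U$ (with $D=\cohook(X)$), contradicting $\mathcal T>\mathcal U$. Since, by the analysis in the proof of Theorem~\ref{thm-maximal-non-kissing}, no string kisses $I_v$ from the ``wrong'' side, the kiss between $C$ and $D$ again runs from $C$ to $D$. Finally, for the converse I would invoke the second half of Theorem~\ref{agreement}: any pair of maximal non-kissing collections of the form $\mathcal S\cup\{C\}$, $\mathcal S\cup\{D\}$ corresponds to a cover, and the analysis above then tells us that this cover points from the source of the kiss to its target.

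The main obstacle --- essentially the only non-bookkeeping point --- is the vanishing $\Hom_A(Y,\tau_A X)=0$: one must correctly invoke that each indecomposable summand of a support $\tau$-tilting module is $\Ext$-projective in its associated torsion class, so that $\Hom_A(-,\tau_A X)$ vanishes on all of $\mathcal T$ and in particular on $Y\in\mathcal U\subseteq\mathcal T$, and then convert this into the absence of a kiss through Theorem~\ref{dims}. Everything else amounts to keeping straight which of the two torsion classes is the larger one and using maximality of non-kissing collections to force the existence of a kiss in the first place.
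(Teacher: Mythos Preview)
Your proposal is correct and follows essentially the same argument as the paper: maximality forces a kiss, Theorem~\ref{agreement} gives the cover, and the vanishing $\Hom_A(Y,\tau_A X)=0$ via \cite[Proposition~5.8]{AS} (together with Theorem~\ref{dims}) fixes the direction of the kiss, with the injective-string case handled by the observation from Theorem~\ref{thm-maximal-non-kissing} that there is never a kiss from $I_v$ to another string. Your treatment is in fact slightly more careful than the paper's in spelling out why both exchanged strings cannot be injective and why, in the mixed case, the injective string must sit on the smaller side of the cover.
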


An infinite poset is not necessarily characterized by its cover relations, but a finite poset is.  We therefore have the following corollary:

\begin{corollary} If $A$ has only finitely many functorially finite torsion classes, then the poset of torsion classes is isomorphic to the poset of maximal non-kissing collections, ordered by the transitive closure of $\mathcal S\cup\{C\} > \mathcal S\cup \{D\}$ where the kisses go from $C$ to $D$.  \end{corollary}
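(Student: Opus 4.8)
The plan is to deduce this from the immediately preceding Theorem~\ref{covers} together with the general order-theoretic fact that a finite poset is determined by its Hasse diagram (its cover relations). First I would observe that by Theorem~\ref{st-tor} and Theorem~\ref{thm-maximal-non-kissing} there is a bijection between functorially finite torsion classes of $A$ and maximal non-kissing collections of long strings of $\hat Q$. Under the hypothesis that $A$ has only finitely many functorially finite torsion classes, this is a bijection between two finite sets.

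Next I would invoke Theorem~\ref{covers}, which identifies the cover relations in the poset of functorially finite torsion classes with the relation on maximal non-kissing collections given by: $\mathcal S\cup\{C\}$ covers $\mathcal S\cup\{D\}$ whenever the two collections differ in exactly one element and the kisses between $C$ and $D$ go from $C$ to $D$. Thus the two posets have identical Hasse diagrams under the bijection above.

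The final step is the purely combinatorial lemma that a finite poset $P$ is isomorphic, as a poset, to the transitive closure of its cover relation; equivalently, $x \le y$ in $P$ if and only if there is a chain $x = z_0 \lessdot z_1 \lessdot \cdots \lessdot z_m = y$ of covers. This is standard: one direction is immediate since covers are order relations, and the other follows by induction on the length of a maximal chain between $x$ and $y$, which exists and is finite because $P$ is finite. Applying this to the poset of functorially finite torsion classes, and transporting along the bijection, yields that the poset of torsion classes is isomorphic to the poset on maximal non-kissing collections defined as the transitive closure of $\mathcal S\cup\{C\} > \mathcal S\cup\{D\}$ whenever the kisses go from $C$ to $D$.

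I do not anticipate a serious obstacle here: the corollary is essentially a formal consequence of Theorem~\ref{covers} plus finiteness. The only point requiring a word of care is that the transitive closure of the cover relation genuinely recovers the full order relation, which is exactly where finiteness is used (and why, as the text notes, the statement can fail for infinite posets); but this is the elementary lemma recalled above.
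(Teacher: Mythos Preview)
Your proposal is correct and follows exactly the paper's approach: the paper simply remarks that a finite poset is characterized by its cover relations and deduces the corollary immediately from Theorem~\ref{covers}. Your write-up is more detailed than the paper's one-line justification, but the argument is the same.
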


\section{Combinatorics of torsion classes}
\label{comb}

In this section we consider how the bijection between functorially finite torsion classes and support $\tau$-tilting module plays out for gentle algebras in terms of the combinatorics we have been developing.

We begin with the following theorem, describing the strings in the category of quotients of copies of a collection of strings.  

\begin{theorem} Let $A=kQ/I$ be a gentle algebra.  Let $X=\bigoplus X_i$ with each $X_i$ a string.  A string $Y$ is in $\Fac X$ if and only if $Y$ can be written as a union of strings each of which is a substring of $Y$ and
a factor of some $X_i$.  \end{theorem}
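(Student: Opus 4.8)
The plan is to prove both directions separately, using the graph-map description of morphisms (Theorem \ref{Thm-graph-maps}) and the combinatorics of factorizations and substrings throughout.

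For the ``if'' direction: suppose $Y = \bigcup_j Z_j$ where each $Z_j$ is a substring of $Y$ and a factor of some $X_{i_j}$. Since $Z_j$ is a factor of $X_{i_j}$, there is a surjection $X_{i_j} \twoheadrightarrow Z_j$ given by a quotient factorization of $X_{i_j}$, so each $Z_j \in \Fac X$. I would then argue that $Y$ is an iterated extension of the $Z_j$, or more directly a quotient of $\bigoplus_j Z_j$: the point is that writing $Y$ as a union of substrings $Z_j$ means the vector space at each vertex of $Y$ is spanned by the corresponding spaces of the $Z_j$, and the arrow actions are compatible, so the obvious map $\bigoplus_j Z_j \to Y$ (summing the inclusions-as-substrings, suitably interpreted) is surjective. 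One has to be slightly careful: ``substring'' should be read as an interval in the walk defining $Y$, and the claim is that these intervals cover all of $Y$; covering the vertices-with-multiplicity and the arrows of the diagram of $Y$ is exactly what is needed for surjectivity of $\bigoplus Z_j \to Y$. Since $\Fac X$ is closed under quotients and finite direct sums, $Y \in \Fac X$.

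For the ``only if'' direction: suppose $Y \in \Fac X$, so there is a surjection $p\colon X^{\oplus m} \twoheadrightarrow Y$ for some $m$. Decompose $p$ into its components $p_\ell \colon X_{i_\ell} \to Y$ over the indecomposable summands of the source. Each $p_\ell$ is a linear combination of graph maps by Theorem \ref{Thm-graph-maps}; the image of a graph map $f_T$ with $T = ((F_1,E,D_1),(F_2,E,D_2))$ is exactly the submodule $E$ of $Y$, which is a substring of $Y$, and $E$ is also a factor of $X_{i_\ell}$ (it is the $E_1$-part of a quotient factorization of $X_{i_\ell}$). The key step is then: since $p$ is surjective, the images of the graph maps appearing (with nonzero coefficient) in the various $p_\ell$ must jointly generate $Y$ as a module. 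I would argue that for a string module, a collection of string submodules generates $Y$ as a module if and only if the corresponding substrings cover $Y$ (cover every vertex-occurrence and, in fact, suffice to cover every arrow of the diagram of $Y$ — here one uses that $Y$ is a string module, so it has no ``extra'' structure beyond its diagram, and a submodule supported on a proper sub-diagram cannot generate a vertex-occurrence outside its support). Hence the substrings $E$ arising this way cover $Y$, giving the desired decomposition $Y = \bigcup E$ with each $E$ a substring of $Y$ and a factor of some $X_i$.

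The main obstacle I expect is the bookkeeping in the ``only if'' direction around the precise sense in which ``the images of the graph maps generate $Y$'' translates to ``the substrings cover $Y$.'' Graph maps can have image a submodule $E$ which sits inside $Y$ via a submodule factorization, and different graph maps may have overlapping or nested images; one must check that the union of these submodules being all of $Y$ forces the union of the underlying substrings (as subwalks of the walk defining $Y$) to be all of $Y$, and conversely. This is essentially the statement that for a string module $Y$, the lattice of ``string submodules'' is understood combinatorially, and that a sum of string submodules equals $Y$ iff their supports (with the correct multiplicities, i.e.\ as subwalks) exhaust $Y$; I would isolate this as a small lemma about string modules and prove it by an elementary dimension-count at each vertex combined with the observation that a submodule of $M(Y)$ generated in a vertex $x$ with all of $Y$'s diagram reachable-downward from $x$ recovers that whole downward-closed piece. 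Once that combinatorial lemma is in hand, both directions are short.
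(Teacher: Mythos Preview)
Your proposal is correct and follows essentially the same approach as the paper: both reduce to the graph-map basis of Theorem~\ref{Thm-graph-maps} and translate surjectivity of a map $\bigoplus X_i \to Y$ into the combinatorial condition that the middle strings $E$ of the relevant admissible pairs cover the diagram of $Y$. The paper's proof is far terser---three sentences, simply asserting that surjectivity onto $Y$ amounts to hitting every arrow and that a map $X_i\to Y$ corresponds to a quotient of $X_i$ which is a submodule of $Y$---so the bookkeeping you flag in your final paragraph is exactly the detail the paper leaves implicit.
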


Here, when we write that $Y$ is a union of a certain set of strings, we mean that the strings may overlap, but each arrow of $Y$ occurs in 
at least one of the strings.  

\begin{proof} 
$Y$ is in $\Fac X$ if and only if there is a quotient map from a sum of copies of 
$X$ onto $Y$.  This means that, for each arrow of $Y$, we have to be able
to find some $X_i$ which maps to $Y$ and hits that arrow.  The map from
$X_i$ to $Y$ corresponds to a quotient of $X_i$ and a submodule of $Y$. 
\end{proof}






Let $\mathcal S$ be a collection of strings in $Q$, and let 
$\alpha$ be an arrow of $\hat Q$.  We will define $\Mc(\mathcal S,\alpha)$ to be a certain long string of $\hat Q$.  
We will construct it arrow by arrow in both directions from $\alpha$.  (We use the symbol $\Mc$ for this map to emphasize that this is the algebraic version of a map defined by McConville \cite[Section 8]{McC}.)

Let $\Gamma_0$ be the lazy path at $e(\alpha)$, and $\gamma_0=\alpha$.
We will define a sequence of
arrows $\gamma_i$, and strings $\Gamma_i=\gamma_i\Gamma_{i-1}$, for $i=1,2,\dots$.  (Note that we do not include $\alpha$ in $\Gamma_0$.)
Suppose we have already constructed $\gamma_1,\dots,\gamma_i$, and let
$u=e(\gamma_i)=e(\Gamma_i)$.  
If $u$ is a fringe vertex, we set $\im=i$ and stop.  Otherwise, we divide into cases:
\begin{itemize}
\item If $\gamma_i$ is a direct arrow and $\Gamma_i \in \mathcal S$, define $\gamma_{i+1}$ to be the unique direct arrow
such that $\gamma_{i+1}\gamma_i \not\in I$.  
\item If $\gamma_i$ is a direct arrow and $\Gamma_i\not \in 
\mathcal S$, define $\gamma_{i+1}$ to be the unique inverse arrow starting
at $u$ which is not $\gamma^{-1}_i$.  
\item If $\gamma_i$ is an inverse arrow and $\Gamma_i\in\mathcal S$, 
define $\gamma_{i+1}$ to be the unique direct arrow from $u$ other than $\gamma^{-1}_i$.  
\item If $\gamma_i$ is an inverse arrow and $\Gamma_i\not\in\mathcal S$, define $\gamma_{i+1}$ to be the unique inverse arrow such that $\gamma_i^{-1}\gamma_{i+1}^{-1}\not\in I$.
\end{itemize}
We continue in this way until we reach a fringe vertex.

We now extend the string in the opposite direction.  Define
$\Theta_0$ to be the lazy path at $s(\alpha)$.  We will proceed to define
a sequence of arrows $\gamma_{-j}$ and strings $\Theta_j=\Theta_{j-1}\gamma_{-j}$
for $j=1,2,\dots$.  Suppose that
we have already constructed $\gamma_{-1},\gamma_{-2}\dots\gamma_{-j}$, and
let $v=s(\gamma_{-j})$.  
If $v$ is a fringe
vertex, we set $\jm=j$ and stop.  Otherwise, we
define $\gamma_{-j-1}$ using the previous rule, but reversing the roles of direct and inverse arrows throughout.  Explicitly, we divide into cases as follows:
\begin{itemize}
\item If $\gamma_{-j}$ is an inverse arrow and $\Theta_j \in \mathcal S$, define $\gamma_{-j-1}$ to be the unique inverse arrow
such that $\gamma^{-1}_{-j-1}\gamma^{-1}_{-j} \not\in I$.  
\item If $\gamma_{-j}$ is an inverse arrow and $\Theta_j\not \in 
\mathcal S$, define $\gamma_{-j-1}$ to be the unique direct arrow starting 
at $v$ which is not the inverse of $\gamma_{-j}$.  
\item If $\gamma_{-j}$ is a direct arrow and $\Theta_j\in\mathcal S$, 
define $\gamma_{-j-1}$ to be the unique inverse arrow ending at $v$ other than $\gamma^{-1}_{-j}$.  
\item If $\gamma_{-j}$ is a direct arrow and $\Theta_j\not\in\mathcal S$, define $\gamma_{-j-1}$ to be the unique direct arrow such that $\gamma_{-j}\gamma_{-j-1}\not\in I$.
\end{itemize}
Now define $\Mc(S,\alpha)$ to be the concatenation of $\Gamma_{\im}$, $\alpha$,
and $\Theta_{\jm}$.  


\begin{theorem} \label{cang} Let $T$ be a functorially finite torsion class in $\modu A$.  Let
$\mathcal S_T$ be the strings in $T$.  
The collection of modules $\Mc(\mathcal S_T,\alpha)$, as 
$\alpha$ runs through the arrows of $\hat Q$, yields:
\begin{itemize}
	\item $\cohook(M)$ for $M$ an $\Ext$-projective of $T$ (each appearing for two choices of arrow $\alpha$),
    \item the injective string at
    each vertex of $Q$ over which no module in $T$ is supported (each appearing
    twice), and
    \item the injective string at each fringe vertex which is a sink
    (each appearing once).  
\end{itemize}
\end{theorem}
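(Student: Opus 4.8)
The plan is to show that the map $\alpha \mapsto \Mc(\mathcal{S}_T,\alpha)$ produces exactly the long strings predicted by the correspondence of Theorem~\ref{thm-maximal-non-kissing}, applied to the maximal compatible collection attached to $T$. By Theorem~\ref{st-tor}, $T$ corresponds to a support $\tau$-tilting module whose indecomposable summands are the $\Ext$-projectives of $T$, and the associated maximal compatible collection consists of the $\Ext$-projective indecomposable $A$-modules together with the objects $P_v[1]$ for $v$ a vertex over which $T$ is not supported. Under Theorem~\ref{thm-maximal-non-kissing}, this collection becomes a maximal non-kissing collection of long strings: $\cohook(M)$ for each $\Ext$-projective $M$, and $I_v$ for each such $v$. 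So the content of the theorem is: (i) the output of $\Mc$ is always a long string, i.e.\ it terminates at fringe vertices in both directions; (ii) the multiset of outputs is precisely this maximal non-kissing collection, with the stated multiplicities, plus the injective strings at fringe-vertex sinks.

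First I would establish termination: at each step the construction moves to a new vertex along a well-defined arrow (using the gentle conditions (S1), (S2), (G2) to see that the "unique direct/inverse arrow" called for always exists and is unique in $\hat Q$), and since a string in a gentle algebra can revisit each vertex at most once, the process must reach a fringe vertex in finitely many steps in each direction. This shows $\Mc(\mathcal{S}_T,\alpha)$ is a long string. Next I would analyze what long string one gets. The key observation is that the four-case rule is exactly designed so that the string "hugs" the boundary of $\Fac$ of the $\Ext$-projectives: when the current prefix $\Gamma_i$ lies in $\mathcal{S}_T$ (equivalently, in $T$, as a string module) one continues "straight" (direct arrow after direct arrow, respecting $I$), and when it leaves $\mathcal{S}_T$ one "turns". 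I would argue that, by the description of $\tau_{\hat A}Y = \cohook(Y)$ in Proposition~\ref{tau-cohook} and the characterization of $\Fac X$ in terms of substrings that are factors of the $X_i$, the turning points of $\Mc(\mathcal{S}_T,\alpha)$ coincide with the shoulders of some $\cohook(M)$ with $M$ $\Ext$-projective, or else the string never turns and is forced to be an injective string $I_v$. A clean way to organize this: show that for each arrow $\alpha$ lying on $\cohook(M)$ for $M$ an $\Ext$-projective, running $\Mc$ from $\alpha$ reconstructs $\cohook(M)$ (the membership test $\Gamma_i \in \mathcal{S}_T$ behaves correctly because the proper factors of $M$ that are substrings of $\cohook(M)$ lie in $T = \Fac X$ while the arms do not), and conversely every arrow is so accounted for, or lies on an injective string of one of the two remaining types.

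The multiplicity bookkeeping is then a counting argument: each long string that is genuinely two-sided (runs between two distinct fringe vertices) contains at least one non-fringe arrow and is produced by $\Mc$ started from each of its arrows — but one must check that $\Mc$ started from \emph{any} arrow of a given long string $L$ (in the non-kissing collection) returns $L$ itself, which follows from the fact that the rule is "local" and deterministic in both directions, so the whole string is determined by any one of its arrows together with its orientation; so I would phrase the count as: the arrows of $\hat Q$ are partitioned (as a multiset, counting the two orientations, or rather: each arrow appears in exactly... ) — more carefully, each non-fringe arrow lies on a unique long string in the collection and each such long string, being a maximal non-kissing collection member, is hit by $\Mc$ exactly from its constituent arrows; matching this against "two choices of $\alpha$" for $\cohook(M)$ and for $I_v$ with $v \in Q_0$ (these have two "ends" but the relevant arrows... ) versus "once" for the sink fringe vertices requires identifying which arrows of $\hat Q$ lie on which long strings. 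I expect the \textbf{main obstacle} to be precisely this last identification — proving that the membership test $\Gamma_i \in \mathcal{S}_T$ does the right thing at every step, i.e.\ that the prefixes of $\Mc(\mathcal{S}_T,\alpha)$ that lie in $T$ are exactly the ones along which the construction should continue straight — since this is where the interaction between the torsion-class closure properties ($\Fac$, extensions), the combinatorics of cohooks/hooks, and McConville's turning rule all have to be reconciled; the termination and the gross multiplicity count are comparatively routine once that is in hand.
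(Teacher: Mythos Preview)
Your proposal has a genuine gap at the heart of the multiplicity argument. You claim that ``$\Mc$ started from \emph{any} arrow of a given long string $L$ (in the non-kissing collection) returns $L$ itself,'' justifying this by saying the rule is ``local and deterministic.'' But the rule is \emph{not} local: at each step it tests whether the entire prefix $\Gamma_i$ (measured from the starting arrow $\alpha$) lies in $\mathcal S_T$, so the same arrow of $\hat Q$, approached from different starting points, can trigger different choices. In fact the claim is false outright, since long strings in a maximal non-kissing collection typically share arrows (in the $A_2$ example, the arrow $\alpha$ lies on both $\cohook(P_2)$ and on the horizontal injective string), yet $\Mc(\mathcal S_T,\alpha)$ produces only one output. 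Relatedly, your assertion that ``each non-fringe arrow lies on a unique long string in the collection'' is false for the same reason. Your termination argument is also flawed: strings in a gentle algebra can revisit a vertex arbitrarily often whenever the algebra admits a band (e.g.\ the Kronecker quiver), so ``revisit each vertex at most once'' does not hold.

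The paper avoids all of this by reversing the logic: rather than trying to determine what $\Mc$ does for every arrow, it takes each long string $L$ in the expected output and explicitly exhibits \emph{one particular} arrow $\gamma_x$ on $L$ (namely the minimal $x$ such that a certain suffix of $L$ lies in $\mathcal S_T$) for which a direct inductive verification shows $\Mc(\mathcal S_T,\gamma_x)=L$; applying this to $L$ and to $L^{-1}$ gives two arrows for each $\cohook(M)$ and each $I_v$ with $v\in Q_0$, and one for each fringe-sink injective. A separate counting argument then shows that the number of arrows of $\hat Q$ equals the total multiplicity claimed, so nothing else can occur. Termination for every arrow falls out of this count, rather than being proved in advance.
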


We divide the proof of the theorem into the next four propositions.  

  Let $M$ be a string in $Q$.  Let $\cohook(M)=\gamma_r \dots \gamma_0$, and let $M=\gamma_b\dots \gamma_a$ for some $0<a\leq b<r$.  Define
  $$X(M)=\{i \mid b+1\geq i\geq 1, \gamma_i \textrm{ is direct}, \gamma_b\dots\gamma_{i+1} \in \mathcal S_T\}.$$   
  We consider the condition $\gamma_b\dots\gamma_{i+1}\in \mathcal S_T$ to be vacuous when $i=b+1$, and $\gamma_{b+1}$ is a direct arrow since it is a
  shoulder of $\cohook(M)$, so $b+1\in X(M)$.  
  For $i=b$, the condition that $\gamma_b\dots\gamma_{i+1}\in \mathcal S_T$
  is interpreted as meaning that the lazy path at $e(\gamma_b)$ is in
  $\mathcal S_T$.  

  \begin{proposition}\label{module} Let $M$ be $\Ext$-projective in $T$,
    with $\cohook(M)=\gamma_r\dots\gamma_0$.  Let $x$ be the
    minimum element of $X(M)$.  Then $\Mc(\mathcal S_T,\gamma_x)=\cohook(M)$.
    \end{proposition}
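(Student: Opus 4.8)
The plan is to run the $\Mc$ construction starting from the arrow $\gamma_x$ and check step by step that it traces out exactly $\cohook(M)$. I would first set up notation carefully: since $x$ is the minimum of $X(M)$, the arrow $\gamma_x$ is a direct arrow, and by definition of $X(M)$ the string $\gamma_b\dots\gamma_{x+1}$ (the portion of $\cohook(M)$ lying strictly after $\gamma_x$, up through the right shoulder) lies in $\mathcal S_T$; moreover $\gamma_x$ is either the right shoulder itself ($x=b+1$) or lies inside the left arm of $\cohook(M)$ or is an arrow of $M$. Then $e(\gamma_x)$ plays the role of $e(\alpha)$ and $\Gamma_0$ is the lazy path there.

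The main work is to verify that the rules defining $\gamma_{i+1}$ in the $\Mc$ construction reproduce the arrows of $\cohook(M)$ in both directions. Going to the right (towards $e(\cohook(M))$): I would argue inductively that at each step the partial string $\Gamma_i$ agrees with the corresponding terminal segment $\gamma_{?}\dots\gamma_{x+1}$ of $\cohook(M)$, and that $\Gamma_i \in \mathcal S_T$ precisely as long as we have not passed the right shoulder — once we are on the right arm (a maximal run of direct arrows of $\cohook(M)$), the string properly contains $M$'s worth of data plus extra, so it is no longer a factor of anything forced to be in $T$; here I'd need the fact that $\Ext$-projectives of $T$ are the indecomposable summands of the support $\tau$-tilting module and that $\cohook(M)$ restricted past the shoulder cannot lie in $\mathcal S_T$ (this is where the minimality/maximality built into $\cohook$ and the characterization of $\Fac X$ from the previous theorem get used). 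When $\Gamma_i\in\mathcal S_T$ and $\gamma_i$ is direct, the rule picks the unique direct continuation $\gamma_{i+1}$ with $\gamma_{i+1}\gamma_i\notin I$ — this is exactly how $\cohook(M)$ continues along a direct run or turns at a peak; when $\Gamma_i\notin\mathcal S_T$ and $\gamma_i$ is direct, the rule turns to the unique inverse arrow, which matches the descent into an arm. One checks the analogous statements for inverse arrows. Going to the left from $s(\gamma_x)$, the roles of direct and inverse are swapped, and the key point is that every proper left-terminal segment $\Theta_j$ already fails to be in $\mathcal S_T$ starting immediately (because $x$ was chosen minimal, so no index below $x$ lies in $X(M)$, meaning the relevant left-extensions are never in $\mathcal S_T$), which forces the construction to keep extending along direct arrows and then switch appropriately, reproducing the left arm and then $M$ itself up to $\gamma_x$.

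The step I expect to be the main obstacle is pinning down exactly when $\Gamma_i \in \mathcal S_T$ versus $\Gamma_i\notin\mathcal S_T$ as the construction crosses the shoulders of $\cohook(M)$, and symmetrically for $\Theta_j$ on the left. This requires translating "$\Ext$-projective in $T$" into a statement about which substrings of $\cohook(M)$ are forced to be in or out of $\mathcal S_T$: a factor of $M$ is in $T$ (as $M\in T$ and $T$ is closed under quotients, so factors of $M$ that are quotients lie in $T$), but a string that extends past a shoulder of $\cohook(M)$ is \emph{not} a factor of $M$ and, by $\tau$-rigidity together with the kiss characterization of Theorem \ref{basis}, cannot belong to $\mathcal S_T$ without producing a kiss. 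Once this dichotomy is established, the induction is essentially bookkeeping: at each vertex there are exactly two outgoing and two incoming arrows (we are in $\hat Q$, where nothing is defective), so "the unique direct/inverse arrow satisfying the stated condition" is genuinely unique, and matching it against the local shape of $\cohook(M)$ is routine. I would close by noting that the construction halts at fringe vertices on both ends because $\cohook(M)$ does, so the output is precisely $\Gamma_{\im}\,\gamma_x\,\Theta_{\jm}=\cohook(M)$.
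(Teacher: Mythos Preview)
Your overall strategy---running the $\Mc$ construction from $\gamma_x$ and verifying inductively that each step reproduces the corresponding arrow of $\cohook(M)$---is exactly the paper's approach, and you correctly name the three tools needed (closure of $T$ under quotients; the kiss criterion for $\Hom(-,\tau M)$ together with $\Ext$-projectivity of $M$; minimality of $x$). The gap is that your stated invariant for membership of $\Gamma_i$ and $\Theta_j$ in $\mathcal S_T$ is wrong in both directions, and getting this invariant right is the entire content of the proof.

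In the forward direction you claim $\Gamma_i\in\mathcal S_T$ exactly until the shoulder $\gamma_{b+1}$ is passed. This already fails inside $M$: if $y<b$ and $\gamma_{y+1}$ is an \emph{inverse} arrow of $M$, then $\Gamma_i=\gamma_y\cdots\gamma_{x+1}$ is a submodule (not a quotient) substring of $\cohook(M)$, so there is a kiss from $\cohook(\Gamma_i)$ to $\cohook(M)$ along $\Gamma_i$, and $\Ext$-projectivity of $M$ in $T$ forces $\Gamma_i\notin\mathcal S_T$. (Incidentally, the arm past $\gamma_{b+1}$ consists of inverse arrows, not direct ones.) The correct invariant is: $\Gamma_i\in\mathcal S_T$ if and only if the next arrow $\gamma_{y+1}$ of $\cohook(M)$ is direct---in that case $\Gamma_i$ is a quotient of $\Omega=\gamma_b\cdots\gamma_{x+1}\in\mathcal S_T$; otherwise the kiss argument applies.

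In the backward direction you claim every $\Theta_j$ lies outside $\mathcal S_T$. This is also false: when $\gamma_{z-1}$ is inverse, the substring $\Theta_j=\gamma_{x-1}\cdots\gamma_z$ sits inside $M$ with both bounding arrows $\gamma_x$ (direct) and $\gamma_{z-1}$ (inverse) pointing away from it, so $\Theta_j$ is a quotient of $M$ and hence lies in $\mathcal S_T$. Minimality of $x$ is used only in the complementary case $\gamma_{z-1}$ direct: if $\Theta_j$ were in $\mathcal S_T$, then since $T$ is extension-closed and $\gamma_x$ is direct, the string $\gamma_b\cdots\gamma_z=\Omega\,\gamma_x\,\Theta_j$ would also lie in $\mathcal S_T$, placing $z-1\in X(M)$ and contradicting minimality. (Note too that going backward from $\gamma_x$ one first traverses part of $M$, then the shoulder $\gamma_{a-1}$, then the arm---not the reverse order you describe.) Once the shoulder-based dichotomy is replaced by this next-arrow-orientation dichotomy, the four cases line up with the four branches of the $\Mc$ rule and the induction goes through.
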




  \begin{proof}
    Let us write $\Omega$ for the string $\gamma_b\dots\gamma_{x+1}$ which is,
    by assumption, in $\mathcal S_T$.  (If $x=b+1$, then $\Omega$ is not defined.)
    
  The proof is by induction.  Suppose that, in the construction of
  $\Mc(\mathcal S_T,\gamma_x)$, we have constructed $\Gamma_i$ and it is
  a substring of $\cohook(M)$, say $\Gamma_i=\gamma_y\dots\gamma_{x+1}$.  Suppose
  now that $\gamma_{y+1}$ is direct.  By our choice of $x$,
  $\gamma_b\dots\gamma_{x+1}\in \mathcal S_T$, and since $\gamma_{y+1}$ is
  direct, $\Gamma_i$ is a quotient substring of
  $\gamma_b\dots\gamma_{x+1}$, so it is also in $\mathcal S_T$.  Thus our
  algorithm chooses a direct arrow, necessarily $\gamma_{y+1}$.

  Suppose next that $\gamma_{y+1}$ is inverse.  
There is
a kiss from $\cohook(\Gamma_i)$ to $\cohook(M)$.  This implies that $\Gamma_i$
cannot be in $\mathcal S_T$.  Thus, our algorithm chooses an inverse arrow,
necessarily $\gamma_{y+1}$.

It follows by induction that $\Gamma_{i_{\rm max}}$ agrees with $\gamma_r\dots\gamma_{x+1}$, i.e., the part of $\cohook(M)$ after $\gamma_x$.

Now let us consider the part of $\cohook(M)$ before $\gamma_x$.  Suppose that
we have already constructed $\Theta_j$, and it is a substring of
$\cohook(M)$, say $\Theta_j=\gamma_{x-1}\dots\gamma_{z}$.  Suppose now that
$\gamma_{z-1}$ is direct.  If $\Theta_j$ were in $\mathcal S_T$, then
since, by assumption, $\Omega$ is in $\mathcal S_T$, so is their extension,
which contradicts the minimality of $x$. (If $x=b+1$, so $\Omega$ is not
defined, then $\Theta_j\in\mathcal S_T$ and $\gamma_{z-1}$ direct contradicts
the minimality of $x$.)  Thus, our algorithm chooses a direct arrow, necessarily $\gamma_{z-1}$.  

Suppose that $\gamma_{z-1}$ is inverse.  Then $\Theta_j$ is a quotient of
$M$, so it is in $\mathcal S_T$.  Thus our algorithm chooses an inverse arrow,
necessarily $\gamma_{z-1}$.  It follows by induction that $\Theta_{j_{\rm max}}$
agrees with $\gamma_{x-1}\dots\gamma_0$, i.e., the part of $\cohook(M)$ before
$\gamma_x$.  This completes the proof.  \end{proof}

  The above proposition shows that we can construct any Ext-projective of
  $T$ in two ways: once as in the proposition, and once applying
  the proposition to the reverse string.  Note that the orientations of
  the two chosen edges will be opposite, so this does indeed yield two
  distinct arrows $\alpha_1,\alpha_2$ of $\hat Q$ such that $M=\Mc(\mathcal S_T,\alpha_1)=\Mc(\mathcal S_T,\alpha_2)$.  

  We now consider the simpler case of injective strings.

  \begin{proposition} \label{inj} Let $T$ be a torsion class in $\modu A$.
    Let $v$ be a vertex such that no module of $T$ is supported at $v$,
    and let $I_v=\gamma_r\dots \gamma_0$
    with $\gamma_j$ direct for $i\geq j \geq 0$, and $\gamma_j$ inverse
    for $r\geq j \geq i+1$.  Let $x$ be minimal such that
    $\gamma_{i-1}\dots\gamma_{x+1}\in \mathcal S_T$.  (Again, this condition is vacuous for $x=i$.) Then
    $\Mc(\mathcal S_T,\gamma_x)=I_v$.  
  \end{proposition}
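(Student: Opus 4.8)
The plan is to mimic the proof of Proposition~\ref{module}, but in the simpler setting where the ``core'' string $M$ has been replaced by the empty (lazy) path at $v$. The key structural observation is that the injective string $I_v = \gamma_r\dots\gamma_0$ has a unique ``valley'' at $v = e(\gamma_i) = s(\gamma_{i+1})$: all arrows $\gamma_i,\dots,\gamma_0$ are direct and all arrows $\gamma_r,\dots,\gamma_{i+1}$ are inverse, so reading $I_v$ from left to right we go down a sequence of inverse arrows into $v$ and then up a sequence of direct arrows out of $v$. With $x$ chosen minimal so that $\Omega := \gamma_{i-1}\dots\gamma_{x+1} \in \mathcal S_T$ (vacuous if $x=i$), I would run the $\Mc$ construction starting from the direct arrow $\gamma_x$ and show by induction in both directions that it reproduces exactly $I_v$.

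First I would handle the forward direction (constructing $\Gamma_{\im}$, the part of $I_v$ after $\gamma_x$). Suppose inductively $\Gamma_i = \gamma_y\dots\gamma_{x+1}$ is a substring of $I_v$. If $\gamma_{y+1}$ is direct (which happens precisely when $y+1 \le i$), then since $\gamma_{i-1}\dots\gamma_{x+1} \in \mathcal S_T$ by the choice of $x$ and $\Gamma_i$ is a quotient substring of it (as $\gamma_{y+1}$ is direct, so the factorization is a quotient factorization at that end), $\Gamma_i \in \mathcal S_T$, and the algorithm continues with a direct arrow, forced to be $\gamma_{y+1}$ by Lemma~\ref{uniqueadd}. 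If $\gamma_{y+1}$ is inverse (i.e. $y+1 \ge i+1$), I claim $\Gamma_i \notin \mathcal S_T$: indeed $\Gamma_i$ passes through the vertex $v$ (it contains $\gamma_i\dots\gamma_{x+1}$, or at least its left end is at or past $v$), but no module of $T$ is supported at $v$, so $\Gamma_i$ cannot be in $\mathcal S_T$ — hence the algorithm chooses an inverse arrow, forced to be $\gamma_{y+1}$. (The case $\gamma_{x}$ itself: $\gamma_{x+1}$ might be direct or an inverse; the same dichotomy applies, noting the base case where the condition on $\Gamma_0$ being in $\mathcal S_T$ reduces to the lazy path at $e(\gamma_x)$, which passes through $v$ only once we've moved past, so one must check $x \le i$ forces the right first step.) By induction this reaches the fringe vertex $e(\gamma_r)$ and $\Gamma_{\im} = \gamma_r\dots\gamma_{x+1}$.

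Next the backward direction (constructing $\Theta_{\jm}$, the part of $I_v$ before $\gamma_x$). Suppose inductively $\Theta_j = \gamma_{x-1}\dots\gamma_z$ is a substring of $I_v$; since $x \le i$ all of $\gamma_{x-1},\dots,\gamma_z$ are direct. If $\gamma_{z-1}$ is direct, then $\Theta_j \notin \mathcal S_T$, for otherwise — whether or not $\Omega$ is defined — the common extension $\Omega$ together with $\Theta_j$ (joined through the direct run $\gamma_{i-1}\dots\gamma_x\gamma_{x-1}\dots\gamma_z$) would be a string in $\mathcal S_T$ ending earlier, contradicting minimality of $x$; so the algorithm (with direct/inverse roles reversed) picks a direct arrow, forced to be $\gamma_{z-1}$. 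If $\gamma_{z-1}$ is inverse, then $z = i+1$, i.e. $\Theta_j = \gamma_{x-1}\dots\gamma_{i+1}\dots$ — wait, rather $z-1 = i+1$ is wrong; I mean $\gamma_{z-1}$ inverse forces $z-1 \ge i+1$, so $z \ge i+2$, and $\Theta_j$ then lies entirely among the direct arrows and in fact $\Theta_j$ is a factor of $I_v$ not passing through... Here I would instead argue $\Theta_j$ passes through $v$, hence $\Theta_j\notin\mathcal S_T$ is the wrong conclusion; the correct claim in this subcase is that $\Theta_j$ is a quotient of the relevant injective substring and therefore \emph{is} in $\mathcal S_T$ only if supported off $v$ — so in fact this subcase does not arise for $z-1$ before $v$, and for $z-1$ at or after the valley the argument parallels Proposition~\ref{inj}'s analogue in Proposition~\ref{module}. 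By induction $\Theta_{\jm} = \gamma_{x-1}\dots\gamma_0$, and concatenating gives $\Mc(\mathcal S_T,\gamma_x) = \gamma_r\dots\gamma_0 = I_v$.

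The main obstacle I expect is getting the bookkeeping around the valley vertex $v$ exactly right: specifically, verifying that in the backward direction the algorithm never ``escapes'' along an inverse arrow before reaching $v$ (which would produce a string properly containing a factor in $\mathcal S_T$ and violate minimality of $x$), and dually that in the forward direction it does escape at the right moment — precisely when $\Gamma_i$ first properly contains $v$ in its support. Both reduce to the single clean fact ``a string lies in $\mathcal S_T$ only if it avoids $v$,'' combined with the minimality of $x$, so once the induction hypotheses are stated carefully (tracking the position of the left/right endpoint relative to $v$) the case analysis should close exactly as in Proposition~\ref{module}, only with $M$ replaced by the trivial string at $v$ and with the observation that an injective string kisses nothing, so the ``kiss from $\cohook(\Gamma_i)$ to $\cohook(M)$'' step of the earlier proof is replaced by the even simpler support argument.
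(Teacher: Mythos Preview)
Your approach is exactly the paper's: in the forward direction, each $\Gamma$ before $v$ is a quotient of $\Omega\in\mathcal S_T$ so the algorithm follows direct arrows to $v$, and past $v$ each $\Gamma$ is supported at $v$ so $\Gamma\notin\mathcal S_T$ and the algorithm follows inverse arrows; in the backward direction, $\Theta_j\in\mathcal S_T$ would make the extension $\gamma_{i-1}\cdots\gamma_z\in\mathcal S_T$ (via extension-closure of $T$, since $\gamma_x$ is direct), contradicting minimality of $x$.

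The only issue is self-inflicted confusion in your backward analysis. You already observed that since $x\le i$ every arrow $\gamma_{x-1},\dots,\gamma_0$ is direct, so the case ``$\gamma_{z-1}$ inverse'' is vacuous and the muddled paragraph about it should simply be dropped; the backward direction consists entirely of direct arrows down to the fringe vertex $s(\gamma_0)$, and the single claim you need at each step is $\Theta_j\notin\mathcal S_T$, which you argued correctly in the first half of that paragraph.
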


  \begin{proof} The proof is essentially the same as for Proposition
    \ref{module}.  

    Let us write $\Omega$ for the string $\gamma_{i-1}\dots\gamma_{x+1}$ which is,
    by assumption, in $\mathcal S_T$.  (If $x=i+1$, then $\Omega$ is not defined.)

    Since $\Omega\in \mathcal S_T$, it also follows that
    $\gamma_{j}\dots \gamma_{x+1}\in \mathcal S_T$ for all $i-1\geq j \geq x$.
    It follows that $\Mc(\mathcal S_T,\gamma_x)$ proceeds along direct
    arrows to $v$.  From that point on, by the defining condition for $v$, the
    string constructed so far will not lie in $\mathcal S_T$, and so it
    will continue by inverse arrows away from $v$.

    Now consider the extension of $\gamma_x$ in the opposite direction.  If
    the segment $\gamma_{x-1}\cdots \gamma_{y}$ were in $\mathcal S_T$, then
    the extension of it and $\Omega$ would be too, violating the minimality
    of $x$.  (If $\Omega$ is not defined, then $\gamma_{x-1}\cdots\gamma_y$
    itself violates the minimality of $x$.)  Thus, the extension from
    $s(\gamma_{x})$ proceeds entirely by direct arrows.  This completes the
    proof. 
  \end{proof}

  Again, as for the Ext-projectives of $T$, this shows that for each injective
  string $I_v$ corresponding to a vertex $v$ outside the support of $T$,
  there are
  two arrows $\alpha_1,\alpha_2$ of opposite orientations such that  $I_v=
  \Mc(\mathcal S_T,\alpha_1)=\Mc(\mathcal S_T,\alpha_2)$.

  Finally, we consider the injective strings at sink fringe vertices.

  \begin{proposition} Let $v$ be a sink fringe vertex.  Let
    $I_v=\gamma_r\dots\gamma_0$ be the injective at $v$, with $e(\gamma_r)=v$,
    which is composed
    entirely of direct arrows.  Let $x$ be minimal such that $\gamma_{r-1}\dots\gamma_{x+1}$ is contained in $\mathcal S_T$.  Then $I_v=\Mc(\mathcal S_T,\gamma_x)$.
  \end{proposition}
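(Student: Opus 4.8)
The plan is to run exactly the argument used for Proposition~\ref{inj}, of which this statement is the degenerate case where $v$ is a sink fringe vertex. Since no $A$-module is supported at a fringe vertex, the hypothesis of Proposition~\ref{inj} holds automatically; and since $v$ has no outgoing arrow, $I_v$ has no inverse part, i.e.\ it is the purely direct string $\gamma_r\cdots\gamma_0$ with $\gamma_r$ the unique arrow into $v$. First I would record the shape of $I_v$: using (S2), (G2) and the fact that every original vertex is non-defective in $\hat Q$, at the end away from $v$ the string $I_v$ can only terminate because $s(\gamma_0)$ is a fringe vertex (necessarily a source), so $I_v$ really is a long string, running from the source fringe vertex $s(\gamma_0)$ to the sink fringe vertex $v$. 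Write $\Omega=\gamma_{r-1}\cdots\gamma_{x+1}$, which lies in $\mathcal S_T$ by the choice of $x$ (with the usual convention that the condition is vacuous, and $\Omega$ a lazy path, when the index range is empty).

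For the forward half of the construction of $\Mc(\mathcal S_T,\gamma_x)$: every initial segment $\gamma_j\cdots\gamma_{x+1}$ with $r-1\ge j\ge x$ is a quotient of $\Omega$ --- the $j=x$ case being the lazy path at $e(\gamma_x)$ --- so, $T$ being closed under quotients, each of these lies in $\mathcal S_T$. Hence at every step the applicable rule is the ``direct arrow, current string in $\mathcal S$'' case, and (S2)/(G2) force the unique legal continuation to be the next arrow $\gamma_{j+1}$ of $I_v$. Inducting, the forward construction produces $\gamma_{x+1},\dots,\gamma_r$ and then halts upon reaching the fringe vertex $v=e(\gamma_r)$, so $\Gamma_{\im}=\gamma_r\cdots\gamma_{x+1}$.

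For the backward half, the key point is that each segment $\Theta_j=\gamma_{x-1}\cdots\gamma_{x-j}$, and the lazy path $\Theta_0$ at $s(\gamma_x)$, fails to lie in $\mathcal S_T$. Indeed, if some $\Theta_j$ were in $\mathcal S_T$, then gluing $\Omega$ and $\Theta_j$ across $\gamma_x$ produces the direct string $\gamma_{r-1}\cdots\gamma_{x-j}$, which fits into a short exact sequence $0\to M(\Omega)\to M(\gamma_{r-1}\cdots\gamma_{x-j})\to M(\Theta_j)\to 0$; since $T$ is closed under extensions and both ends lie in $T$, the glued string lies in $\mathcal S_T$, contradicting the minimality of $x$. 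Therefore the backward construction is always in the ``direct arrow, current string not in $\mathcal S$'' case, and (S2)/(G2) force it to produce $\gamma_{x-1},\dots,\gamma_0$, halting at the fringe source vertex $s(\gamma_0)$; so $\Theta_{\jm}=\gamma_{x-1}\cdots\gamma_0$. Concatenating, $\Mc(\mathcal S_T,\gamma_x)=\Gamma_{\im}\,\gamma_x\,\Theta_{\jm}=\gamma_r\cdots\gamma_0=I_v$.

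The step I expect to be the main obstacle is the backward half: the forward half and the determination of the shape of $I_v$ only use closure under quotients together with the uniqueness built into the gentle conditions, whereas the non-membership $\Theta_j\notin\mathcal S_T$ genuinely relies on $T$ being closed under extensions, and one must be careful about which of $M(\Omega)$, $M(\Theta_j)$ is the submodule and which the quotient in the gluing sequence. One should also verify the boundary cases --- $\Omega$ or $\Theta_j$ a lazy path, or $I_v$ of small length --- but these are handled by the same argument, treating lazy paths exactly as in the proof of Proposition~\ref{inj}.
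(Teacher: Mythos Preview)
Your proof is correct and follows essentially the same approach as the paper, which simply defers to the proof of Proposition~\ref{inj} and notes that the construction terminates upon reaching the fringe vertex $v$. You are more explicit than the paper about why the forward segments lie in $\mathcal S_T$ (closure under quotients) and why the backward segments do not (closure under extensions, via the short exact sequence you write down), but this is exactly the content of the reference to Proposition~\ref{inj}.
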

  \begin{proof} The proof is the same as for Proposition \ref{inj}, except
    that since $v$ is a fringe vertex, once the string reaches $v$, it ends.
    \end{proof}

\begin{proposition} No strings are constructed with greater multiplicity
  than given in Theorem \ref{cang}.  In particular, those strings not
  listed there, are not $\Mc(\mathcal S_T,\alpha)$ for any arrow
  $\alpha$ of $\hat Q$.  \end{proposition}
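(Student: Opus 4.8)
The plan is to finish the proof by a single global count of the arrows of $\hat Q$. The three preceding propositions already exhibit, for each of the three families of long string listed in Theorem \ref{cang}, explicit arrows $\alpha$ realizing it as $\Mc(\mathcal S_T,\alpha)$, together with the claim that each listed string is produced at least as often as stated: twice for $\cohook(M)$ with $M$ an $\Ext$-projective of $T$, twice for the injective string at a vertex of $Q$ outside the support of $T$, once for the injective string at a sink fringe vertex. Since $\alpha\mapsto\Mc(\mathcal S_T,\alpha)$ is a (a priori only partial) function from $\hat Q_1$ to long strings, it therefore suffices to check that the total of the stated multiplicities is exactly $|\hat Q_1|$: then every arrow of $\hat Q$ is already accounted for by the list, leaving no room for any additional multiplicity or any unlisted string. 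Before counting, I would record that the long strings in the list are pairwise distinct, so that ``stated multiplicity'' is unambiguous: the $I_v$ for distinct vertices $v$ of $\hat Q$ are non-isomorphic indecomposable injective $\hat A$-modules, while $\cohook(M)=\tau_{\hat A}M$ lies in the image of $\tau_{\hat A}$ and hence is non-injective, so it equals no $I_v$; and distinct $\Ext$-projectives $M\neq M'$ give $\cohook(M)\neq\cohook(M')$.

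Next I would carry out the two enumerations. Put $n=|Q_0|$. In $\hat Q$ every vertex of $Q$ has exactly two incoming and two outgoing arrows, while every fringe vertex is incident to exactly one arrow joining it to a vertex of $Q$. Summing out-degrees over $\hat Q_0$ gives $|\hat Q_1| = 2n + \#(\text{fringe sources})$; since each of the $2n$ incoming slots at vertices of $Q$ is filled either by one of the $|Q_1|$ arrows of $Q$ or by a fringe source, there are $2n-|Q_1|$ fringe sources, and symmetrically (summing in-degrees) $2n-|Q_1|$ fringe sinks, so $|\hat Q_1| = 4n-|Q_1|$. On the other side, by Theorem \ref{st-tor} the $\Ext$-projective indecomposables of $T$ are the indecomposable summands of a support $\tau$-tilting module $U$ with $T=\Fac U$; hence there are $|U|$ of them, and since $U$ is support $\tau$-tilting the number of vertices of $Q$ outside $\operatorname{supp} T = \operatorname{supp} U$ is $n-|U|$. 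Adding the stated multiplicities,
\[
2|U| + 2(n-|U|) + (2n-|Q_1|) = 4n-|Q_1| = |\hat Q_1|.
\]

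Finally I would assemble the argument. The arrows furnished by Propositions \ref{module}, \ref{inj} and the proposition on sink fringe vertices are pairwise distinct: two arrows attached to the same Ext-projective or the same unsupported vertex have opposite orientation, and otherwise the arrows are separated because their images under $\Mc(\mathcal S_T,-)$ are distinct long strings. Thus these propositions already produce at least $2|U| + 2(n-|U|) + (2n-|Q_1|) = |\hat Q_1|$ distinct arrows, hence all of $\hat Q_1$; consequently $\Mc(\mathcal S_T,\alpha)$ is defined for every arrow $\alpha$, the multiplicities computed there are exact, and no long string outside the list of Theorem \ref{cang} arises as $\Mc(\mathcal S_T,\alpha)$. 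I expect the main difficulty to be bookkeeping rather than conceptual: one must verify carefully that the arrows produced in the three propositions are genuinely distinct both within and across the families (this is where the opposite-orientation observations and the distinctness of the target long strings are needed), and that the three cardinalities --- $|U|$ Ext-projectives, $n-|U|$ unsupported vertices of $Q$, and $2n-|Q_1|$ sink fringe vertices --- are correctly identified; termination of the $\Mc$-construction at a fringe vertex for every arrow then falls out of the count rather than needing a separate argument.
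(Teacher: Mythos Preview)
Your proposal is correct and follows essentially the same counting argument as the paper: both show that the total of the stated multiplicities equals $|\hat Q_1|$, forcing equality throughout. The paper's version is terser (it counts $|\hat Q_1|=2|Q_0|+\#(\text{sink fringe vertices})$ directly via in-degrees and does not spell out the distinctness checks), but your additional bookkeeping on distinctness of the long strings and of the produced arrows is sound and harmless.
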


\begin{proof}
  The number of arrows in $\hat Q$ can be determined by counting the number
  of arrows going to each vertex.  This is the number of sink fringe vertices of
  $\hat Q$ plus twice the number of vertices of
  $Q$.  Since the Ext-projectives of $T$ are support $\tau$-tilting, the
  number of Ext-projectives of $T$ equals the number of vertices in the
  support of $T$; this plus the number of vertices not in the support of
  $T$ equals the number of vertices of $Q$.  Therefore, the number of
  strings listed in Theorem \ref{cang} equals the total number of arrows in
  $\hat Q$.  Therefore, each string is constructed with exactly the
  multiplicity given in Theorem \ref{cang}.  
   \end{proof}  

This completes the proof of Theorem \ref{cang}.

\begin{example}
Consider the gentle algebra $A=kQ/I$, where $Q=A_2$ and $I=0$, with the fringed bound quiver $(\hat{Q}, \hat{I})$ as below.
\begin{center}
\begin{tikzpicture}[scale=0.55]
---------
\draw [thick,->] (0.2,0) --(1.9,0);
\node at (0,0) {$2$};
\node at (2.1,0) {$1$};
\node at (1,0.3) {$\alpha$};

---------
\node at (-2.4,-0.05) {$\circ$};
\draw [->] (-2.25,0) -- (-0.4,0);
\draw [->] (-0.2,0.2) -- (-0.2,2);
\node at (-0.15,2.1) {$\circ$};
\draw [->] (-0.2,-1.9) -- (-0.2,-0.2);
\node at (-0.15,-2.05) {$\circ$};
\draw [dashed] (-1,0.1) -- (-0.3,0.7);
\draw [dashed] (0,-0.7) -- (0.7,-0.1);
---
\draw [->] (2.25,0.2) -- (2.25,2);
\node at (2.25,2.1) {$\circ$};
\draw [->] (2.45,0) --(4.35,0);
\node at (4.5,-0.05) {$\circ$};
\draw [->] (2.25,-1.85) --(2.25,-0.2) ;
\node at (2.3,-2) {$\circ$};
\draw [dashed] (1.5,0.2) -- (2.2,0.8);
\draw [dashed] (2.4,-0.7) -- (3.1,-0.1);
-------
\end{tikzpicture}
\end{center}

For the (functorially finite) torsion class $T=$ mod $A$, the set of strings $\mathcal{S}_T= \{e_1,e_2,\alpha \}$ is drawn in bold.

\begin{center}
\begin{tikzpicture}[scale=0.4]
---------
\draw [dashed] (0.2,0) --(1.9,0);
---------
\draw [-,dashed] (-2.25,0) -- (-0.4,0);
\draw [-,dashed] (-0.2,0.2) -- (-0.2,2);
\draw [-,dashed] (-0.2,-1.9) -- (-0.2,-0.2);
---
\draw [-,dashed] (2.25,0.2) -- (2.25,2);
\draw [-,dashed] (2.45,0) --(4.35,0);
\draw [-,dashed] (2.25,-1.85) --(2.25,-0.2) ;
-------
\draw [ultra thick] (0.1,0.15) --(2,0.15);
\node at (-0.1,-0.1) {$\bullet$};
\node at (2.15,-0.1) {$\bullet$};
\end{tikzpicture}
\end{center}
We now verify that as $\alpha$ runs through the arrows of $\hat Q$, we indeed obtain exactly the strings given in Theorem \ref{cang}.  The strings of the form $\cohook(M)$ for $M$ an Ext-projective indecomposable, are as follows.  Each is generated twice, by the arrows drawn as double lines.  

\begin{center}
\begin{tikzpicture}[scale=0.4]
\draw [-,dashed] (-2.25,0) -- (-0.4,0);
\draw [-,dashed] (-0.2,-1.9) -- (-0.2,-0.2);
---
\draw [-,dashed] (2.25,0.2) -- (2.25,2);
\draw [-,dashed] (2.25,-1.85) --(2.25,-0.2) ;
-------
\draw [-,thick, double] (-0.1,0) --(2.2,0);
\draw [-,thick, double] (-0.1,-0.1) -- (-0.1,2);
\draw [-,ultra thick] (2.2,0) --(4.35,0);
\end{tikzpicture}
\qquad
\qquad
\begin{tikzpicture}[scale=0.4]
\draw [-,dashed] (0.1,0) --(1.9,0);
\draw [-,dashed] (-2.25,0) -- (-0.4,0);
\draw [-,dashed] (-0.2,0.2) -- (-0.2,2);
\draw [-,dashed] (-0.2,-1.9) -- (-0.2,-0.2);
\draw [-,dashed] (2.25,-1.85) --(2.25,-0.2) ;
---
\draw [-,thick,double] (2.25,0) -- (2.25,2);
\draw [-,thick,double] (2.25,0) --(4.35,0);
-------
\end{tikzpicture}
\end{center}
Moreover, the injective strings corresponding to the fringe sink vertices are each generated once, as indicated by the double lines:

\begin{center}
\begin{tikzpicture}[scale=0.4]
\draw [-,dashed] (-0.2,0.1) -- (-0.2,2);
\draw [-,dashed] (-0.2,-1.9) -- (-0.2,-0.2);
\draw [-,dashed] (2.25,0.2) -- (2.25,2);
\draw [-,dashed] (2.25,-1.85) --(2.25,-0.2) ;
-------
\draw [-,ultra thick] (-0.1,0) --(2.15,0);
\draw [-,ultra thick] (2.2,0) --(4.35,0);
\draw [-,thick,double] (-2.25,0) -- (-0.2,0);
\end{tikzpicture}
\qquad
\qquad
\begin{tikzpicture}[scale=0.4]
\draw [-,dashed] (0.2,0) --(1.9,0);
\draw [-,dashed] (-2.25,0) -- (-0.4,0);
\draw [-,dashed] (2.25,-1.85) --(2.25,-0.2) ;
\draw [-,dashed] (2.25,0.2) -- (2.25,2);
\draw [-,dashed] (2.4,0) --(4.35,0);
------
\draw [-,thick,double] (-0.2,-1.9) -- (-0.2,0);
\draw [-, ultra thick] (-0.2,0.1) -- (-0.2,2);
\end{tikzpicture}
\qquad
\qquad
\begin{tikzpicture}[scale=0.4]
\draw [-,dashed] (0.2,0) --(1.9,0);
\draw [-,dashed] (-2.25,0) -- (-0.4,0);
\draw [-,dashed] (-0.2,0.2) -- (-0.2,2);
\draw [-,dashed] (-0.2,-1.9) -- (-0.2,-0.2);
\draw [-,dashed] (2.4,0) --(4.35,0);
---------
\draw [-,thick,double] (2.25,-1.85) --(2.25,0) ;
\draw [-, ultra thick] (2.25,0.0) -- (2.25,2);
\end{tikzpicture}.
\end{center}
\end{example}

\section{Extension groups of strings}\label{Ext}

The aim of this section is to study the $\Ext^1$-groups of string modules over gentle algebras. By the Auslander-Reiten formula (see Theorem \ref{ARformula}), this amounts to studying the quotient of a Hom-space by morphisms factoring through an injective.\\

Although $\Hom_{A}(X, \tau_{A}Y)$ for a gentle algebra $A$ admits a nice basis, given by graph maps as in Theorem \ref{Thm-graph-maps}, it appears difficult to decide which of these could form a basis for the quotient by $I_{A}(X, \tau_{A}Y)$. This question turns out to be easier to answer in the fringed algebra case, and since we have the following isomorphism 
\begin{equation}\Ext^1_{A}(Y, X) \simeq \Ext^1_{\hat{A}}(Y, X)
\end{equation}\label{Hom-equality}
from Proposition \ref{tau-cohook}, it is enough to study fringed algebras.

In the following, we first study graph maps starting in an injective string. We give an explicit combinatorial description of the graph maps in $\Hom_{\hat{A}}(X, \tau_{\hat A}Y)$ that factor through injectives in Subsection \ref{graph-maps-factoring}. Then we use that information to determine a basis for $\Ext^1_{A}(Y,X)$ in terms of short exact sequences in Subsection \ref{basis-ext}.

\subsection{Graph maps from an injective module}
We first study graph maps starting in an injective string $I_t$ over $A$. Such an injective string module can be written as $I_t= (\alpha^{-1}_{n+m}\cdots\alpha^{-1}_{n+1})\cdot_t(\beta_{n}\cdots\beta_{1})$, for a vertex $t\in Q_0$ and some arrows $\alpha_i , \beta_j \in Q_1$. Here we write $\cdot_t$ rather than $\cdot$ to indicate that the paths we compose meet at $t$.

The following notion turns out to be central:
A graph map $f_T$ given by an admissible pair $T=((F_1,E,D_1),(F_2,E,D_2))$ is called {\em two-sided} if at least one of $D_1$ and $D_2$ has positive length, and the same holds for $F_1$ and $F_2$.

\begin{lemma}\label{start-in-injective}
A graph map $f_T: I_t \to Y$ starting in an injective string $I_t$ is not two-sided.
\end{lemma}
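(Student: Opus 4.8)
The plan is to read everything off the very rigid shape of an injective string. Written as in the statement, $I_t$ is a word consisting of a maximal run of direct arrows $\beta_n\cdots\beta_1$ immediately followed by a maximal run of inverse arrows $\alpha_{n+m}^{-1}\cdots\alpha_{n+1}^{-1}$; in particular it changes direction exactly once, at the vertex $t$, and its two ends $s(\beta_1)$ and $s(\alpha_{n+m})$ sit as far out along these runs as possible. I would first analyse the quotient factorization $(F_1,E,D_1)$ of $I_t$ that occurs in $T$. By definition of a quotient factorization, whenever $D_1$ has positive length its leftmost arrow belongs to $Q^{op}$, and whenever $F_1$ has positive length its rightmost arrow belongs to $Q$. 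Since in $I_t$ every inverse arrow lies strictly to the left of $t$ and every direct arrow strictly to the right, a non-trivial $D_1$ forces $s(E)$ into the inverse run (hence to the left of $t$), while a non-trivial $F_1$ forces $e(E)$ into the direct run (hence to the right of $t$). As $e(E)$ precedes $s(E)$ inside the factor $E$, these two conclusions are incompatible, so at least one of $D_1$, $F_1$ is the lazy path. After possibly replacing $I_t$ and $E$ by their formal inverses, I may therefore assume $D_1=e_{s(E)}$, so that $E$ is an initial segment $\beta_k\cdots\beta_1$ of the direct run of $I_t$ (a direct string, possibly lazy).

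It then remains to show that $D_2$ is the lazy path as well, which will give that $f_T$ is not two-sided; the case $F_1=e_{e(E)}$ is symmetric, with the roles of $D$ and $F$, and of the two ends of $E$, interchanged. Here I would exploit that $s(E)=s(\beta_1)$ is the outer endpoint of the maximal direct string attached to $t$, so that no arrow of $\hat Q$ can precede $s(\beta_1)$ inside a string. In the submodule factorization $(F_2,E,D_2)$ of $Y$ whose middle factor is identified with $E$, a non-trivial $D_2$ would, by the defining conditions of a submodule factorization, have to begin with an arrow adjacent to the $s(E)$-end of $E$ — exactly the sort of arrow that maximality of the run attached to $t$ rules out. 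Hence $D_1$ and $D_2$ are both trivial, and $f_T$ is not two-sided.

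The purely combinatorial step — that a quotient factorization of $I_t$ must be ``one-sided'' because $I_t$ turns around only once — I expect to be routine. The delicate part, and where I would spend the most care, is the transfer to $Y$: one must keep track of the identification $E_1\sim E_2$ so that the $s(E)$-end of $E$ really is matched with the outer end of the maximal run of $I_t$, and then argue that the maximality of that run genuinely obstructs any extension of $E$ on the $Y$-side. I would regard that as the main obstacle of the proof.
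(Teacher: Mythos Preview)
Your strategy is the paper's: use the single-turn shape of $I_t$ to force one of $D_1,F_1$ to be lazy in the quotient factorization, then argue that the matching piece on the $Y$-side must be lazy too. Two points in your execution need adjustment, though neither breaks the overall plan.

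First, with $D_1$ lazy, $E$ is an initial segment of $I_t$, but it need not lie entirely inside the direct run $\beta_n\cdots\beta_1$: it may pass through $t$ into the inverse arrows. This is harmless, since only $s(E)=s(\beta_1)$ matters afterwards.

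Second, and more substantively, the lemma is stated over $A$ (indeed over an arbitrary gentle algebra), not over $\hat A$, so your appeal to $\hat Q$ is out of place, and the assertion ``no arrow can precede $s(\beta_1)$ inside a string'' is too strong: there may well be an arrow $\gamma$ with $e(\gamma)=s(\beta_1)$ and $\beta_1\gamma\in I$. What the paper actually invokes is the weaker statement that no direct arrow $\beta$ makes $E\beta$ a string. Since $E$ begins with the first arrow $\beta_1$ of $I_t$, such a $\beta$ would force $\beta_1\beta\notin I$, whence $I_t\beta$ would also be a string, contradicting the injectivity (maximality) of $I_t$. So the obstruction you anticipated is the right one, but it comes from $E$ and $I_t$ sharing their first arrow, not from $s(\beta_1)$ having no incoming arrows whatsoever. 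With that correction, your ``delicate transfer'' collapses to the paper's one-line observation.
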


\begin{proof}
There is no quotient factorization $ (F'_1\theta, E, \gamma^{-1} D'_1)$ of $I_t$ with arrows $\gamma,\theta$ in $Q$ because an inverse arrow cannot precede a direct arrow  in $I_t$.
Thus, in any quotient factorization $(F_1, E, D_1)$ of $I_t$, one of $D_1$ or $F_1$ has length zero. We consider, without loss of generality, the case where $D_1$ has length zero, thus $I_t=F_1E$.
Since $I_t$ is injective, there is no arrow $\beta$ in $Q$ such that $E\beta$ is a string. Hence $Y$ cannot admit a submodule factorization of the form $(F_2, E, \beta D'_2)$. We conclude that $D_2$ has length zero in any submodule factorization of $Y$ of the form $(F_2, E, D_2)$. So, $f_T$ is not two-sided.
\end{proof}

\subsection{Graph maps factoring through an injective string module}\label{graph-maps-factoring}

For a pair of strings $X$ and $Y$ in $A$, we now investigate which graph maps in $\Hom_{\hat{A}}(X,\tau_{\hat{A}} Y)$ factor through an injective string $I_t$ in the fringed algebra $\hat{A}$. From Proposition~\ref{tau-cohook}, we know that the Auslander-Reiten translation of a string $Y$ in $\hat{A}$ is obtained by  cohook completion: $$\cohook(Y)= \tau_{\hat{A}} Y = \; _c\!Y_c = \mathcal{I}\cdot\beta\cdot Y\cdot\alpha^{-1}\cdot\mathcal{D}$$
where $\mathcal I$ and $\mathcal D$ respectively denote inverse and direct paths in $\hat{Q}$:\\

\begin{center}
\begin{tikzpicture}[scale=0.8]
\draw  [-,decorate,decoration=snake] (0,0) --(2,0);
\node at (1,0.3) {$Y$};

\draw [->] (-0.05,0) -- (-0.45,-0.5);
\node at (-0.1,-0.4) {$\beta$};
\draw [->] (-1,0) -- (-0.5,-0.5);
\draw [dotted] (-1.5, 0.5) -- (-0.5,-0.5);
\draw [->] (-2,1) -- (-1.5, 0.5);
\node at (-2,0.3) {$\mathcal{I}$};
----
\draw [->] (2.05,0) -- (2.45,-0.5);
\node at (2.1,-0.4) {$\alpha$};
\draw [->] (3,0) -- (2.5,-0.5);
\draw  [dotted] (3.5,0.5) -- (3,0);
\draw [->] (4,1) -- (3.5,0.5);
\node at (4,0.3) {$\mathcal{D}$};

\end{tikzpicture}
\end{center}

We say a string $E$ {\em lies on one of the arms of $\cohook(Y)$} if $E$ is a substring (possibly of length zero) of $\mathcal I$ or $\mathcal D$.
Moreover, we call $Y$  \emph{connectable} to  $X$ if there exists an arrow $\alpha$ such that $Y \alpha^{-1} X$ is a string in $Q$, or dually, $X \alpha Y$ is a string in $Q$:
\begin{center}
\begin{tikzpicture}[scale=0.8]
\draw  [-,decorate,decoration=snake] (0,0.25) --(2,0.25);
\node at (1,0.6) {$X$};

\draw [->] (-0.5,1) --(-0.05,0.25);
\node at (-0.1,0.75) {$\alpha$};
----

\draw  [-,decorate,decoration=snake] (-0.5,1) --(-2,1);
\node at (-1.25,1.3) {Y};
\end{tikzpicture}
\end{center}
\bigskip
Note that if   $Y$ is connectable to  $X$, there exists a graph map $f_T \in \Hom_{\hat{A}}(X,\tau_{\hat{A}} Y)$ given by an admissible pair $T=((F_1,E,D_1),(F_2,E,D_2))$ where   $E$ lies on one of the arms of $\cohook(Y)$:
Denote by $E$ the longest direct string at the end of $X$ and write $X=ED_1$.
This gives, with $F_1$ of length zero, the quotient factorization $(F_1,E,D_1)$ of $X$.
By the cohook construction, $\mathcal{D}$ is the longest direct string in $\hat{A}$ ending in $e(\alpha).$ But $E$ is a direct string ending in $e(\alpha)$. Thus $E$ is a submodule of $\mathcal{D}$, and we can write $\mathcal D = ED_2$. Setting 
$F_2 =  \mathcal{I}\cdot\beta\cdot Y\cdot\alpha^{-1}$
gives the submodule factorization
$(F_2,E,D_2)$ of $\tau_{\hat{A}} Y.$

We refer to this graph map $f_T$ as a \emph{connecting map}. The following is an illustration of the factorization $T$ where the cohooks of are drawn dashed. 
\bigskip
\begin{center}
\begin{tikzpicture}[scale=0.7]


\draw [->] (-2.5,1.25) -- (-0.1,1.25);
\draw [-,decorate,decoration=snake] (-2.5,1.25) -- (-3,3.2);

\node at (-3.5,2.5) {$X$};
\node at (-1.3,1.7) {$E$};
-----


\draw [->,dashed] (-5,1.15) --(-0.1,1.15);
\draw [->,dashed] (0,0) -- (0,1);
\draw [-,decorate,decoration=snake] (0,0) --(2.5,-0.5);
\draw [->,dashed] (2.5,-0.5) -- (3.5,-0.5);
\draw [->,dashed] (3.6,-2) -- (3.6,-0.5);

\node at (-0.25,0.5) {$\alpha$};
\node at (0.7,-0.7) {$Y$}; 
\node at (0,1.1) {$\bullet$};
\node at (0.4,1.4) {$e(\alpha)$};

\end{tikzpicture}
\end{center}
\bigskip

\begin{theorem}\label{factor-through-injective}
For strings $X$ and $Y$ in $Q$, let $f_T \in \Hom_{\hat{A}}(X,\tau_{\hat{A}} Y)$ be a graph map given by an admissible pair $T=((F_1,E,D_1),(F_2,E,D_2))$. 
If $E$ lies on one of the arms of $\cohook(Y)$ and  $f_T$ is not a connecting map, then $f_T$ factors through an injective string of mod-$\hat{A}$.

\end{theorem}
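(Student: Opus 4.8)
The plan is to reduce to the case where $E$ lies on the direct arm $\mathcal D$ of $\cohook(Y)=\mathcal I\cdot\beta\cdot Y\cdot\alpha^{-1}\cdot\mathcal D$ — the inverse-arm case is symmetric, via $(-)^{-1}$ applied to all strings — and then to exhibit $f_T$ as a composite $X\to I_p\to\cohook(Y)$ through an explicit injective string $I_p$ of $\hat A$.

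First I would record the consequences of the hypotheses. Since $(F_2,E,D_2)$ is a submodule factorization and $E$ is a substring of the direct arm $\mathcal D$, the arrow of $\mathcal D$ immediately past $e(E)$ — were there one — would be a direct arrow adjacent to $E$ on the $F_2$ side, contradicting the definition of a submodule factorization; hence $e(E)=e(\alpha)=:v$, the string $E$ is a top segment of $\mathcal D$, say $\mathcal D=ED_2$, and $F_2=\mathcal I\beta Y\alpha^{-1}$. As $E$ is a substring of the string $X$ in $Q$ we get $v\in Q_0$, and since $s(\alpha)=s(Y)\in Q_0$ also $\alpha\in Q_1$. Next, suppose $F_1$ is the lazy path at $v$. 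Using that $\cohook(Y)$ is a string (so that the relevant length-two subpaths avoid $\hat I$), one checks directly that $Y\alpha^{-1}X$ is a string in $Q$, so $Y$ is connectable to $X$; moreover $(F_1,E,D_1)$ already exhibits $E$ as a direct suffix of $X$ while $D_1$ is lazy or begins with an inverse arrow, so $E$ is the longest direct string at the end of $X$, and the connecting map for this connection is precisely $f_T$ — contrary to hypothesis. Therefore $F_1=F_1'\theta$ for a direct arrow $\theta\in Q_1$ with $s(\theta)=v$, and $\theta E$ is a direct string occurring in $X$.

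Now I would build the injective. Let $\mathcal E=[s(E),p]$ be the maximal direct substring of $X$ containing $\theta E$, and let $X_p$ be $\mathcal E$ enlarged by the maximal descending leg of $X$ beyond $p$ (this leg, if nonempty, consists of inverse arrows, being a descent from the peak $p$). Let $I_p$ be the injective string at $p\in Q_0$; then $X_p$ has simple socle $S_p$, so $I_p$ is its injective hull and $X_p$ embeds in $I_p$ around the common socle, with $\mathcal E$ sitting in the direct arm of $I_p$ and the other leg in the inverse arm. Let $V'$ be the subinterval of $I_p$ complementary to the direct segment $\mathcal C'$ of $I_p$ running from the source end of the direct arm up to $v$. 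I claim $f_T$ equals the composite
\[
X\ \twoheadrightarrow\ X_p\ \hookrightarrow\ I_p\ \twoheadrightarrow\ I_p/V'=\mathcal C'\ \hookrightarrow\ \cohook(Y),
\]
where $\mathcal C'$ is embedded in $\cohook(Y)=\mathcal I\beta Y\alpha^{-1}\mathcal D$ as the top segment of $\mathcal D$ lying below $\alpha^{-1}$ (legitimate because $\mathcal C'$ is a direct string ending at $v$ whose final arrow agrees with that of $E$, hence $\mathcal C'$ is a top segment of $\mathcal D$). The verification has the following ingredients: $X\twoheadrightarrow X_p$ is a module map since the complement of $X_p$ in $X$ is forward-closed; $V'$ is a submodule of $I_p$ and $X_p\cap V'$ is exactly the complement of $E$ in $X_p$, so that $X_p\hookrightarrow I_p\twoheadrightarrow\mathcal C'$ equals $X_p\twoheadrightarrow E\hookrightarrow\mathcal C'$ with $E=[s(E),v]$ a top segment of $\mathcal C'$; the composite $E\hookrightarrow\mathcal C'\hookrightarrow\cohook(Y)$ is the inclusion cut out by $(F_2,E,D_2)$; and finally $E$ is also the image of $X\twoheadrightarrow X_p\twoheadrightarrow E$, because $E$ is a quotient of $X_p$ via the factorization whose $F$-part ends in $\theta$ and whose $D$-part is lazy, and the complement of $X_p$ in $X$ lies in the kernel of $X\twoheadrightarrow E$. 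Assembling these, the displayed composite is $f_T$, which thus factors through the injective string $I_p$.

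The main obstacle is the compatibility bookkeeping in this last step: one must verify that the successive copies of $E$ — inside $X$, inside $X_p$, as a subquotient of $I_p$, and inside $\cohook(Y)$ — are placed compatibly, so that the composite is the graph map $f_T$ itself and not a different graph map. This is exactly where the hypothesis "not a connecting map" is used: it forces the extra direct arrow $\theta$ above $E$ in $X$, equivalently places $p$ strictly above $v$, which is precisely what makes $E$ a proper quotient of the peak interval $X_p$ and opens the room to route $f_T$ through $I_p$.
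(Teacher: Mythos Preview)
Your argument is essentially the paper's proof. Both reduce to the case $E\subseteq\mathcal D$, deduce that $F_1$ must begin with a direct arrow (else $f_T$ is connecting), follow the maximal direct run in $X$ from $e(E)$ up to a peak $p$ (the paper's vertex $t=e(\gamma_c)$), and factor $f_T$ through the injective string $I_p$: your four-step composite $X\twoheadrightarrow X_p\hookrightarrow I_p\twoheadrightarrow\mathcal C'\hookrightarrow\cohook(Y)$ is exactly the paper's two graph maps $f_{T'}\colon X\to I_t$ and $f_{T''}\colon I_t\to\cohook(Y)$, broken at their respective middle terms $E'=X_p$ and $E''=\mathcal D=\mathcal C'$.

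One small caveat worth noting (shared with the paper): your justification that $\mathcal C'$ is a top segment of $\mathcal D$ rests on ``$\mathcal C'$ is a direct string ending at $v$ whose final arrow agrees with that of $E$''. When $E$ has length zero this sentence is vacuous, and in fact the direct arm of $I_p$ below $v$ need not coincide with $\mathcal D$ (it continues with the arrow $\delta$ satisfying $\theta\delta\notin\hat I$, which can be $\alpha$ rather than the last arrow of $\mathcal D$). In that situation the factorization through $I_p$ fails; the fix is to take the peak on the $D_1$ side of $v$ instead, which by gentleness then does align with $\mathcal D$. The paper's claim ``$E''=\mathcal D$ is a factor module of $I_t$'' has the same hidden assumption, so this is not a defect of your write-up relative to the source.
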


\begin{proof}

Consider a graph map  $f_T \in \Hom_{\hat{A}}(X,\tau_{\hat{A}} Y)$ where  $E$ lies on one of the arms of $\cohook(Y)$. Without loss of generality, assume $E$ is a substring of $\mathcal{D}$.  Since $(F_2,E,D_2)$ is a submodule factorization of $\cohook(Y)$, the string $E$ lies on the end of the direct string $\mathcal{D}$, so we have   $\mathcal{D}=E D_2$.  We want to find an injective string $I_t$ and graph maps $f_{T'}\in \Hom_{\hat{A}}(X,I_t)$ and $f_{T''}\in \Hom_{\hat{A}}(I_t,\tau_{\hat{A}} Y)$ such that $f_T=f_{T''} \circ f_{T'}$.

In the factorization $(F_1,E,D_1)$ of the string $X$, write $F_1= F_1'\gamma_c \cdots \gamma_1$ where $F_1'$ does not start with a direct arrow. The assumptions of the theorem imply that such a direct string $\gamma_c \cdots \gamma_1$ always exists: If $F_1$ has length zero, then $Y$ is  connectable to  $X$ via the arrow $\alpha$, and $f_T$ is the corresponding connecting map. Otherwise, if $F_1$ starts with an inverse arrow, then $E$ is not a factor module of $X$ and  $(F_1,E,D_1)$ not a quotient factorization. Therefore, the direct arrow $\gamma_c$ exists in $X$.
Denote by $t$ its end point, see the following figure:

\begin{center}
\begin{tikzpicture}[scale=0.8]
\draw [blue] [dashed] (-2.5,0)--(-2,0);
\draw [blue] (-2,0) -- (-0.5,0);
\draw [blue] [->] (-0.45,0) -- (0,0);
\node [blue] at (-0.2,0.25){$\gamma_1$};
\draw [blue] [dashed] (0,0) -- (0.9,0);
\draw [blue] [->] (0.95,0) -- (1.45,0);
\draw [blue][->] (1.55,-0.55) -- (1.55,-0.1);
\draw [blue] [dashed] (1.55,-0.6) -- (1.55,-2);
\draw [blue] (1.55,-2) -- (1.55,-2.7);
\draw [blue] [dashed] (1.55,-2.8) -- (1.55,-3.5);
\node [blue] at (1.15,0.25){$\gamma_c$} ;
\node [blue] at (1.58,0) {$\bullet$} ;
\node [blue] at (1.9,0.3) {$I_t$} ;
-------
\draw [red][->] (-2,-0.1) -- (-0.5,-0.1);
\node [red] at (-1.25,-0.35){$E$} ;
-------
\draw [->] (-0.5,-0.7)-- (-0.5,-0.15);
\node at (-0.65,-0.47){$\alpha$} ;
\draw [dashed] (-2.5,-0.1)-- (-2,-0.1);
\draw  [thick,decorate,decoration=snake](-0.45,-0.75) -- (1.2,-1.2);
\draw  [thick][dashed] (1.2,-1.2) -- (2.1,-1.2);

\draw  [thick,decorate,decoration=snake] (2,-1.2)-- (2.35,-2);
\draw [->] (2.4,-2) -- (2.95,-2);
\draw (3,-2) -- (3,-2.5);
\draw [dashed] (3,-2.5) -- (3,-3.5);
\node at (2.65,-2.3) {$\beta$} ; 
-------
\node at (0,-2) {$\tau_{\hat{A}}Y$} ;

\end{tikzpicture}
\end{center}

Set $E'=\mathcal{I'}\gamma_c \cdots \gamma_1 E$, where $\mathcal{I'}$ is the longest inverse substring in $X$ immediately following $t$.
Then $E'$ is a factor module of $X$ and a submodule of $I_t$, therefore there exists a graph map $f_{T'}\in \Hom_{\hat{A}}(X,I_t)$ with $E'$ as middle term of the factorizations.
Similarly, $E''= \mathcal{D}$ is a factor module of $I_t$ and a submodule of $\tau_{\hat{A}} Y$, therefore there exists a graph map $f_{T''}\in \Hom_{\hat{A}}(I_t,\tau_{\hat{A}} Y)$ with $E''$ as middle term.
By construction, we get $f_T=f_{T''} \circ f_{T'}$.
\end{proof}

In the following corollary we describe a basis of $I_{\hat{A}}(X,\tau_{\hat{A}} Y)$ in terms of graph maps:

\begin{corollary}\label{basis-for-I}
Let $X$ and $Y$ be two strings in $Q$.
Then a basis for $I_{\hat{A}}(X,\tau_{\hat{A}} Y)$ is formed by the graph maps $ f_T \in \Hom_{\hat{A}}(X,\tau_{\hat{A}} Y)$, with
 $T=((F_1,E,D_1),(F_2,E,D_2)) $, such that $E$ lies on one of the arms of $\cohook(Y)$ and $f_T$ is not a connecting map.
\end{corollary}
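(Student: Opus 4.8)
The plan is to deduce this as a direct combinatorial consequence of Theorem~\ref{factor-through-injective} together with the basis given by kisses (Theorem~\ref{basis}). Write $\mathcal G$ for the set of graph maps $f_T \in \Hom_{\hat A}(X,\tau_{\hat A}Y)$, with $T=((F_1,E,D_1),(F_2,E,D_2))$, such that $E$ lies on one of the arms of $\cohook(Y)$ and $f_T$ is not a connecting map. By Theorem~\ref{factor-through-injective}, every element of $\mathcal G$ lies in $I_{\hat A}(X,\tau_{\hat A}Y)$; since $\mathcal G$ is a subset of the graph-map basis of $\Hom_{\hat A}(X,\tau_{\hat A}Y)$ (Theorems~\ref{Thm-graph-maps} and~\ref{basis}), it is automatically linearly independent. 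So the content of the corollary is exactly the \emph{spanning} statement: every map in $I_{\hat A}(X,\tau_{\hat A}Y)$ is a linear combination of elements of $\mathcal G$.

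First I would set up the complementary count. The full graph-map basis of $\Hom_{\hat A}(X,\tau_{\hat A}Y)$ splits into those $f_T$ whose middle term $E$ lies on an arm of $\cohook(Y)$ and those whose $E$ meets the ``body'' $Y$ of $\cohook(Y)$ (i.e.\ $E$ overlaps $Y$, equivalently $E$ is not a substring of $\mathcal I$ or $\mathcal D$). By Lemma~\ref{kiss-a-module} and Theorem~\ref{basis}, the latter are precisely the maps coming from kisses from $\cohook(X)$ to $\cohook(Y)$, hence they descend to a basis of $\underline{\Hom}$ modulo $I$... more precisely, I claim the maps of the second type, \emph{together with} the connecting maps, form a basis of the quotient $\overline{\Hom}_{\hat A}(X,\tau_{\hat A}Y)$. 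Granting this, a dimension count gives $\dim I_{\hat A}(X,\tau_{\hat A}Y) = \dim\Hom_{\hat A}(X,\tau_{\hat A}Y) - \dim\overline{\Hom}_{\hat A}(X,\tau_{\hat A}Y) = |\mathcal G|$, and since $\mathcal G \subseteq I_{\hat A}(X,\tau_{\hat A}Y)$ is linearly independent of the right cardinality, it is a basis.

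The main obstacle is proving that the kiss-maps together with the connecting maps span $\overline{\Hom}_{\hat A}(X,\tau_{\hat A}Y)$, i.e.\ that no nontrivial linear combination of them factors through an injective. For this I would argue as follows. A map factoring through an injective string $I_t$ is, by Theorem~\ref{Thm-graph-maps} applied to $\Hom(X,I_t)$ and $\Hom(I_t,\tau_{\hat A}Y)$ and by Lemma~\ref{start-in-injective}, a composite $f_{T''}\circ f_{T'}$ where each factor is a one-sided graph map; analyzing the middle terms shows the resulting composite $X \to \tau_{\hat A}Y$ has middle term lying on an arm of $\cohook(Y)$ (the ``$\mathcal D$-side'' or ``$\mathcal I$-side''), because the relevant direct/inverse arm of $I_t$ must map into the corresponding arm of $\cohook(Y)$. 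Conversely one checks that a connecting map does \emph{not} factor this way: its middle term $E$ is the maximal direct string at the end of $X$, and any attempted factorization through $I_t$ would force $E$ to be extendable by the arrow $\alpha$ inside $X$, contradicting maximality. So the subspace $I_{\hat A}(X,\tau_{\hat A}Y)$ is spanned by graph maps with $E$ on an arm and not connecting — which is the reverse inclusion — and this, combined with the linear independence already noted, finishes the proof. In writing this up I would lean on the explicit combinatorial description of $\cohook(Y)$ as $\mathcal I\cdot\beta\cdot Y\cdot\alpha^{-1}\cdot\mathcal D$ to make the bookkeeping about where middle terms can sit completely explicit.
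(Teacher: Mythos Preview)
Your final paragraph is essentially the paper's proof: decompose a map factoring through an injective into compositions $g_i\circ h_j$ of graph maps, use Lemma~\ref{start-in-injective} to see each $g_i$ is one-sided, and then track the middle term to show it lands properly inside an arm of $\cohook(Y)$, hence is an arm map and not a connecting map. So the proposal is correct and takes the same route as the paper.

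Two expository issues are worth fixing. First, your middle paragraph misreads Theorem~\ref{basis}: \emph{every} graph map in $\Hom_{\hat A}(X,\tau_{\hat A}Y)$ arises from a kiss between $\cohook(X)$ and $\cohook(Y)$, including the arm maps and the connecting maps (the middle term $E$ is a quotient of $X$, hence a proper quotient of $\cohook(X)$ via the cohook arms, and a proper submodule of $\cohook(Y)$ since the endpoints of $\cohook(Y)$ are fringe vertices). So ``body maps are precisely the kisses'' is false, and the dimension-count detour via $\overline{\Hom}$ is circular anyway: showing that body-plus-connecting maps span the quotient is literally equivalent to showing arm-non-connecting maps span $I_{\hat A}$, which you then prove directly. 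You can delete that paragraph. Second, your argument that a connecting map does not factor (``would force $E$ to be extendable by $\alpha$ inside $X$'') is garbled, since $\alpha$ is not an arrow of $X$. The paper's version is cleaner: with $D_1''=D_2''=0$, the middle term $E''$ of $g_i$ is a proper direct quotient of $I_t$, hence sits at the \emph{start} of the arm $\mathcal D$ and is properly contained in it; since $E\subseteq E''\subsetneq\mathcal D$, the term $E$ cannot reach the top vertex $e(\alpha)$ of $\mathcal D$, whereas a connecting map by definition has $e(E)=e(\alpha)$.
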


\begin{proof}
Theorem \ref{factor-through-injective} assures that each graph map $f_T$ belongs to $I_{\hat{A}}(X,\tau_{\hat{A}} Y)$.
We show that every morphism $f \in \Hom_{\hat{A}}(X,\tau_{\hat{A}} Y)$ that factors through an injective $I$ is a linear combination of the graph maps as described in the hypothesis. Since graph maps are linearly independent, we therefore get a basis for $I_{\hat{A}}(X,\tau_{\hat{A}} Y)$.

Thus, assume  $f=g \circ h$ where $h \in \Hom_{\hat{A}}(X,I)$ and $g \in \Hom_{\hat{A}}(I,\tau_{\hat{A}} Y)$.
Write the maps $g$ and $h$ in the basis of graph maps, thus  $g=\sum a_i g_i$ and  $h=\sum b_j h_j$ with scalars $a_i, b_j \in k$ and graph maps $g_i,h_j$.  
Each $f_{ij}=g_i\circ h_j$ is either zero or a composition of graph maps (which is again a graph map).  It suffices to show that the $f_{ij}$ satisfy the conditions stated above.

The graph map $f_{ij}$  factors  via $h_j$ and $g_i$ through some injective string $I_t$ (which is a direct summand of $I$).
In general, if a composition of graph maps, given by  factorizations $T'=((F'_1,E',D'_1),(F'_2,E',D'_2)) $ and  $T''=((F''_1,E'',D''_1),(F''_2,E'',D''_2)) $, yields a graph map given by  the factorization $T=((F_1,E,D_1),(F_2,E,D_2)) $, one necessarily has $E \subset E'$ and $E \subset E''$.
Since $g_i$ starts in $I_t$, lemma \ref{start-in-injective} implies that the factorization $T''$ defining the graph map  $g_i$ cannot be two-sided.
Therefore, without loss of generality, we can assume that both $D''_1$ and $D''_2$ have length zero, that is, we have $I_t =F''_1E''$ and $\tau_{\hat{A}} Y= F''_2E''$.
If $E''$ is a proper quotient of  $I_t$, we conclude that $E''$ is a direct string, properly contained in the arm $\mathcal{D}$ of $\tau_{\hat{A}} Y=\cohook(Y)$. This implies that  $E$ lies on one of the arms of $\cohook(Y)$ and $f_{ij}$ is not a connecting map.
Otherwise,  if $E''=I_t$, we get that $\tau_{\hat{A}}Y= F''_2E''$ contains $e(E'')$ as internal vertex, which is impossible since this does not belong to $Q$, or else $\tau_{\hat{A}}Y= E''=I_t$ which is also impossible since an injective is not an image under the Auslander-Reiten translate.

\end{proof}

\subsection{The vector space $\Ext^1_{A}(Y,X)$}\label{basis-ext}

Recall from Theorem \ref{ARformula} and  Proposition \ref{tau-cohook}  that we have the following isomorphisms:

\begin{equation}\label{duality}
\Ext^1_{A}(Y,X) \simeq \Ext^1_{\hat{A}}(Y, X) \simeq D(\Hom_{\hat{A}}(X,\tau_{\hat{A}} Y)/I_{\hat{A}}(X,\tau_{\hat{A}} Y))
\end{equation}
In Subsection \ref{graph-maps-factoring} we determined a basis for this space given by the graph maps in $\Hom_{\hat{A}}(X,\tau_{\hat{A}} Y)$ that do not belong to  $I_{\hat{A}}(X,\tau_{\hat{A}} Y)$. In Corollary \ref{basis-for-I} we gave a precise description of which graph maps have to be excluded. We aim in this subsection for a description of a basis for $\Ext^1_{A}(Y,X)$ in terms of short exact sequences.

Clearly, every connecting graph map $f_T$ gives rise to a non-zero extension 
\begin{equation}\label{epsilonT}
\epsilon_T: \quad 0 \rightarrow X \rightarrow Y \alpha^{-1} X \rightarrow Y \rightarrow 0
\end{equation}
Moreover, a two-sided graph map 
$$f_T \in \Hom_{A}(X, Y) \mbox{ with } T=((F_1,E,D_1), (F_2,E,D_2)) $$
yields a non-zero extension $\epsilon_T$ as follows, see \cite{S}:

\begin{equation}\label{extension-epsilon}
\epsilon_T : \quad 0 \rightarrow X \rightarrow F_1ED_2 \oplus F_2ED_1 \rightarrow Y \rightarrow 0
\end{equation}
\bigskip

\begin{lemma}\label{ext-lemma}
For strings $X$ and $Y$ in $Q$, there is a bijection between the 
two-sided graph maps 
$f_T \in \Hom_{A}(X, Y)$ and the non-connecting  graph maps in 
$\Hom_{\hat{A}}(X,\tau_{\hat{A}} Y)$ that do not belong to $I_{\hat{A}}(X,\tau_{\hat{A}} Y)$
\end{lemma}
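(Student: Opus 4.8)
The plan is to realize the bijection concretely as composition with a canonical surjection $\pi\colon \tau_{\hat{A}}Y\twoheadrightarrow Y$. First I would make the right-hand set explicit. By Theorem~\ref{basis} the graph maps form a basis of $\Hom_{\hat{A}}(X,\tau_{\hat{A}}Y)$, and by Corollary~\ref{basis-for-I} those graph maps $f_T$ whose middle term $E$ lies on an arm of $\cohook(Y)$ and which are not connecting maps form a basis of $I_{\hat{A}}(X,\tau_{\hat{A}}Y)$. Since every connecting map has its middle term contained in the arm $\mathcal{D}$, the condition ``non-connecting and not in $I_{\hat{A}}(X,\tau_{\hat{A}}Y)$'' is equivalent to ``$E$ is a substring of neither $\mathcal{I}$ nor $\mathcal{D}$''; I will call such graph maps the ones with $E$ \emph{not on an arm}, and these constitute the right-hand set.

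Next I would set up $\pi$. Write $\tau_{\hat{A}}Y=\cohook(Y)=\mathcal{I}\beta\,Y\,\alpha^{-1}\mathcal{D}$ as in Proposition~\ref{tau-cohook}. One checks at once that $(\mathcal{I}\beta,Y,\alpha^{-1}\mathcal{D})$ is a quotient factorization of $\cohook(Y)$ (the arrow $\beta$ flanking $Y$ on the left is direct, the arrow $\alpha^{-1}$ flanking it on the right is inverse), so it defines a canonical surjection $\pi\colon\tau_{\hat{A}}Y\to Y$, which is itself a graph map, and hence an induced map $\pi_*\colon\Hom_{\hat{A}}(X,\tau_{\hat{A}}Y)\to\Hom_{\hat{A}}(X,Y)=\Hom_A(X,Y)$ (the last identification because $X$ and $Y$ are $A$-modules). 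The claim is that $\pi_*$ restricts to the bijection of the lemma. For a graph map $f_T$, the composite $\pi\circ f_T$ is either $0$ or again a graph map, and in the latter case its middle term $\bar{E}$ satisfies $\bar{E}\subseteq E$ and $\bar{E}\subseteq Y$, exactly as in the proof of Corollary~\ref{basis-for-I}. Two observations then give the forward direction: if $E$ is not on an arm then $E$ contains a vertex of $Y$ (every vertex of $\cohook(Y)$ outside $Y$ lies on an arm, because the shoulders $\beta,\alpha$ merely attach the arms $\mathcal{I},\mathcal{D}$ to $Y$ at the vertices $e(Y),s(Y)$ and introduce no new vertices), so $\pi\circ f_T\neq 0$; and the resulting graph map $g\colon X\to Y$ is two-sided, since either $\bar{E}$ is interior to $Y$ — so both of its neighbours inside $Y$ are present — or $\bar{E}$ reaches an endpoint of $Y$, say $e(Y)$, and then the part of $E$ immediately to the left of $\bar{E}$ begins with the shoulder $\beta$ and, being part of $E$, lies inside $X$, forcing the $X$-side of $g$'s factorization to have positive length on that end.

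Then I would prove the reverse direction and bijectivity together. Given a two-sided graph map $g\colon X\to Y$ with middle term $\bar{E}$ and factorizations $(\bar{F}_1,\bar{E},\bar{D}_1)\in\mathcal{F}(X)$ and $(\bar{F}_2,\bar{E},\bar{D}_2)\in\mathcal{S}(Y)$, I construct its preimage under $\pi_*$. If $\bar{E}$ is interior to $Y$, take $T=\bigl((\bar{F}_1,\bar{E},\bar{D}_1),(\mathcal{I}\beta\bar{F}_2,\bar{E},\bar{D}_2\alpha^{-1}\mathcal{D})\bigr)$; the second triple is a submodule factorization of $\cohook(Y)$ because the arrows flanking $\bar{E}$ are the same as before, and plainly $\pi\circ f_T=g$. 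If $\bar{E}$ reaches $e(Y)$, so that $\bar{F}_2$ is a lazy path, then two-sidedness forces $\bar{F}_1$ to have positive length, its first arrow is direct, and by Lemma~\ref{uniqueadd} this arrow must be the shoulder $\beta$ (the unique direct extension of $\bar{E}$ at $e(Y)$); iterating, the portion of $X$ beyond $\beta$ shares with the arm $\mathcal{I}$ a maximal initial inverse substring $\mathcal{I}'$, and one takes $\mathcal{I}'\beta\bar{E}$ as the new middle term, extending the two factorizations accordingly (and performing the symmetric extension at $s(Y)$ simultaneously when $\bar{D}_2$ is also lazy). Uniqueness of the preimage again rests on Lemma~\ref{uniqueadd}: $\mathcal{I}'$ cannot be shortened without the $X$-side ceasing to be a quotient factorization, and it cannot be taken to be all of $\mathcal{I}$ because $\mathcal{I}$ ends in a fringe arrow, which $X$ does not contain. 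Hence $\pi_*$ is injective on the right-hand set with image precisely the two-sided graph maps $X\to Y$, which is the assertion.

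The main obstacle is exactly the case analysis underlying the two observations in the forward direction and the construction in the reverse direction: for every position of the middle term relative to the two endpoints of $Y$, the two shoulders, and the two arms, one has to check that the flanking arrows are oriented correctly for the relevant quotient and submodule factorizations, and one has to apply the uniqueness of direct and inverse extensions (Lemma~\ref{uniqueadd}) repeatedly to match the cohook arms against substrings already present in $X$. Additional care is needed in the degenerate cases where the middle term has length zero, where $\bar{E}=Y$, or where $X$ or $Y$ is short.
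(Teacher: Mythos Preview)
Your proposal is correct and is essentially the same argument as the paper's, with one pleasant difference in framing. The paper builds the bijection by hand in the direction $f_T\mapsto f_{\hat T}$: given a two-sided graph map $X\to Y$ with middle term $E$, it prepends $\mathcal{I}\beta$ (respectively appends $\alpha^{-1}\mathcal{D}$) to the $Y$-side factorization when $F_2$ (respectively $D_2$) has positive length, and otherwise uses two-sidedness and Lemma~\ref{uniqueadd} to identify the first arrow of $F_1$ with the shoulder $\beta$ and enlarge $E$ to $\mathcal{I}'\beta E$; it then asserts the construction is injective with the correct image. Your version recognizes this construction as the inverse of post-composition with the canonical quotient $\pi\colon\tau_{\hat A}Y\to Y$, which makes the bijection canonical rather than ad hoc and lets you argue both directions symmetrically. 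The case analysis you describe --- checking that $E$ not on an arm forces $E\cap Y\neq\varnothing$, that the shoulder orientation prevents $e(E)=e(Y)$ with $E\subseteq Y$, and that the maximal choice of $\mathcal{I}'$ is forced by the quotient condition on the $X$-side together with $X$ avoiding fringe vertices --- is exactly what the paper is doing implicitly, so the content is the same.
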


\begin{proof}
Assume $f_T \in \Hom_{A}(X, Y)$ is a two-sided graph map given by the factorization  $T=((F_1,E,D_1),(F_2,E,D_2)) $. The following construction associates to $f_T$ a graph map $f_{\hat{T}} \in \Hom_{\hat{A}}(X,\tau_{\hat{A}} Y)$ given by $\hat{T}=((\hat{F}_1,\hat{E},\hat{D}_1),(\hat{F}_2,\hat{E},\hat{D}_2)) $: As before, we denote $\tau_{\hat{A}} Y =  \mathcal{I}\beta Y\alpha^{-1} \mathcal{D}$. 
If $F_2$ has positive length, we define $\hat{F}_2 = \mathcal{I}\beta F_2$ and leave $E$ unchanged.
Otherwise, since $T$ is two-sided, we know that $F_1$ has positive length. As $E$ is a submodule, the string $F_1$ starts with a direct arrow, and this must be the arrow $\beta$ since in the gentle algebra $A$ there is only one way to extend the string $E$. 
We subdivide $F_1$ as  $F_1=\hat{F}_1\mathcal{I'}\beta $, where $\mathcal{I'}$ is the longest inverse substring in $X$ immediately following $\beta$, and extend $E$ at the end by $\mathcal{I'}\beta$, that is, we put $\hat E= \mathcal{I'}\beta E$.
We proceed in the same way for the $D$ side. 
These basis elements $f_{\hat{T}} \in \Hom_{\hat{A}}(X,\tau_{\hat{A}} Y)$ are clearly distinct, non-connecting and do not lie in $I_{\hat{A}}(X,\tau_{\hat{A}} Y)$, by the description given in corollary \ref{basis-for-I}.
It is moreover easy to see that every such basis element $f_{\hat{T}} \in \Hom_{\hat{A}}(X,\tau_{\hat{A}} Y)$ is obtained by this construction. 
\end{proof}

\begin{theorem}\label{basis-for-ext}
For strings $X$ and $Y$ in $Q$, the extensions $\epsilon_T$ given in Equation \eqref{epsilonT} and \eqref{extension-epsilon}, with $T$ connecting or two-sided, form a basis for $\Ext^1_{A}(Y,X)$.
\end{theorem}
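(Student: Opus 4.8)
The plan is to combine the isomorphism in Equation \eqref{duality} with the basis description obtained in the previous subsections, and then check that the abstract basis of $\Hom_{\hat A}(X,\tau_{\hat A}Y)/I_{\hat A}(X,\tau_{\hat A}Y)$ really is carried over to the asserted family of short exact sequences. Concretely, by Equation \eqref{duality} we have $\Ext^1_A(Y,X)\simeq D\big(\Hom_{\hat A}(X,\tau_{\hat A}Y)/I_{\hat A}(X,\tau_{\hat A}Y)\big)$, so it suffices to produce a basis of the quotient space on the right and to match it with the extensions $\epsilon_T$. By Theorem \ref{basis}, the graph maps $f_T$ with $T\in\mathcal A(X,\tau_{\hat A}Y)$ form a basis of $\Hom_{\hat A}(X,\tau_{\hat A}Y)$, and by Corollary \ref{basis-for-I} the subset of those graph maps whose middle term $E$ lies on an arm of $\cohook(Y)$ and which are not connecting maps is a basis of $I_{\hat A}(X,\tau_{\hat A}Y)$. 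Hence the \emph{complementary} set of graph maps — those for which $E$ does not lie on an arm of $\cohook(Y)$, together with the connecting maps — descends to a basis of the quotient. So the first step is simply to record that this complementary set of graph maps is a basis of $\Hom_{\hat A}(X,\tau_{\hat A}Y)/I_{\hat A}(X,\tau_{\hat A}Y)$.

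Next I would identify the complementary set more explicitly. A graph map $f_T$ with $T=((F_1,E,D_1),(F_2,E,D_2))$ whose middle term $E$ does \emph{not} lie on an arm of $\cohook(Y)$ must have $E$ meeting the $Y$-part of $\cohook(Y)=\mathcal I\beta Y\alpha^{-1}\mathcal D$; I claim such maps are, after adjusting the factorization, exactly the images under the construction of Lemma \ref{ext-lemma} of the two-sided graph maps in $\Hom_A(X,Y)$ — this is precisely the content of Lemma \ref{ext-lemma} (together with the observation that a non-two-sided graph map $X\to Y$ composed with the cohook inclusion lands on an arm, hence in $I_{\hat A}$, so contributes nothing to the quotient). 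Combining this with the connecting maps, the complementary basis of the quotient is indexed by $\{T \text{ connecting}\}\sqcup\{T\text{ two-sided graph map }X\to Y\}$, which is exactly the index set appearing in the statement.

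Finally I would pass from this basis of the $\Hom$-quotient to the family $\{\epsilon_T\}$ of short exact sequences, using the standard correspondence (as in Schröer \cite{S}, and recalled in Equations \eqref{epsilonT} and \eqref{extension-epsilon}) between a graph map $f_T\colon X\to\tau_{\hat A}Y$ not factoring through an injective and a non-split extension $\epsilon_T$ of $Y$ by $X$; under the Auslander–Reiten duality $\overline{\Hom}_{\hat A}(X,\tau_{\hat A}Y)\simeq D\Ext^1_{\hat A}(Y,X)$ this correspondence is (up to the identification of $\overline{\Hom}$ with the quotient by $I$, which holds here because $\hat A$ is a string algebra and string modules have no self-extensions through injectives beyond those already accounted for) linear and bijective on basis elements, so a basis of the quotient space maps to a basis of $\Ext^1_{\hat A}(Y,X)\simeq\Ext^1_A(Y,X)$. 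One must check that the extension attached to a connecting map $T$ is the one written in \eqref{epsilonT} and that the extension attached to a two-sided graph map $T$ is the one in \eqref{extension-epsilon} — for the two-sided case this is exactly Schröer's middle-term formula $F_1ED_2\oplus F_2ED_1$, and for the connecting case it is the elementary computation that splicing $X$ along its longest direct terminal substring into $\tau_{\hat A}Y$ realizes the pushout $Y\alpha^{-1}X$.

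The main obstacle I anticipate is the bookkeeping in the middle step: verifying that the construction of Lemma \ref{ext-lemma} really gives a \emph{bijection} onto the non-connecting graph maps outside $I_{\hat A}$ — in particular that distinct two-sided factorizations $T$ of maps $X\to Y$ give distinct $\hat T$, that every such $\hat T$ arises, and that the reconstructed $\hat E=\mathcal I'\beta E$ (and its $\mathcal D$-side analogue) is genuinely forced by the gentle condition — together with making sure no graph map is double-counted between the "connecting" and "two-sided" families. Everything else is either cited (the $\Hom$-basis, Corollary \ref{basis-for-I}, the AR formula) or a short direct verification of the shape of $\epsilon_T$.
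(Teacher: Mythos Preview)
Your counting argument (steps one through three) is correct and is exactly what the paper does: Corollary \ref{basis-for-I} plus Lemma \ref{ext-lemma} show that the set $\{T \text{ connecting}\}\sqcup\{T\text{ two-sided}\}$ has cardinality $\dim\Ext^1_A(Y,X)$. The paper draws precisely this conclusion.

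The gap is in your final step. Auslander--Reiten duality is a \emph{duality} $\Ext^1_{\hat A}(Y,X)\simeq D\overline{\Hom}_{\hat A}(X,\tau_{\hat A}Y)$, i.e.\ a nondegenerate pairing, not a linear map $\overline{\Hom}\to\Ext^1$. Knowing that $\{f_{\hat T}\}$ is a basis of the quotient $\overline{\Hom}$ tells you only that $\Ext^1$ has the same dimension; it does not tell you that the particular elements $\epsilon_T\in\Ext^1$ are linearly independent. The ``standard correspondence'' you invoke, $T\mapsto\epsilon_T$, is a set-theoretic assignment from factorizations to short exact sequences; nothing cited guarantees that it realizes the AR pairing, or that the pairing matrix $\langle\epsilon_{T_i},f_{\hat T_j}\rangle$ is invertible. (Also note that for two-sided $T$ the map $f_T$ lies in $\Hom_A(X,Y)$, not in $\Hom_{\hat A}(X,\tau_{\hat A}Y)$, so the phrase ``a graph map $f_T:X\to\tau_{\hat A}Y$ not factoring through an injective and a non-split extension $\epsilon_T$'' conflates two different objects.)

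The paper does not attempt to transport the basis through the duality. Instead, after using \eqref{duality} and Lemma \ref{ext-lemma} only to count, it proves linear independence of the $\epsilon_T$ directly: take an injective envelope $0\to X\xrightarrow{\iota}I\xrightarrow{\pi}Z\to 0$, apply $\Hom_A(Y,-)$, and use the resulting connecting homomorphism $\delta:\Hom_A(Y,Z)\to\Ext^1_A(Y,X)$, which has $\Coker\pi^*$ as image. For each $T$ the paper constructs an explicit graph map $f_E\in\Hom_A(Y,Z)$ with $\delta(f_E)=\epsilon_T$, and then checks that these $f_E$ are linearly independent modulo $\Image\pi^*$ (using that distinct two-sided middle terms cannot overlap nontrivially except by containment, and invoking Corollary \ref{basis-for-I} to control what factors through $I$). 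That is the missing ingredient in your proposal.
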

\begin{proof}
We know from the isomorphism in \eqref{duality} and the bijection in Lemma \ref{ext-lemma} that we listed the correct number of elements $\epsilon_T$ in $\Ext^1_{A}(Y,X)$.
It is therefore sufficient to show that they are linearly independent. 
We do so by showing that the $\epsilon_T$ correspond to linearly independent elements in an isomorphic space.

Consider an injective envelope $I$ of $X$ and extend to a short exact sequence:

\[ 
\xymatrix{
0  \ar[r] &  X \ar[r]^{\iota} 
& I \ar[r]^{\pi}  & Z \ar[r] & 0}
\]

Applying the functor $\Hom(Y, -)$ yields the exact sequence

\[ 
\xymatrix{
0  \ar[r] &  \Hom(Y,X) \ar[r]^{\iota^*} 
& \Hom(Y,I) \ar[r]^{\pi^*}  & \Hom(Y,Z) \ar[r]^\delta & 
\Ext(Y,X) \ar[r] & 0}
\]
where we use $\Ext(Y,I)=0 $ since $I$ is injective.
Thus, $\Ext(Y,X)$ is isomorphic to $\Coker \pi^*$, with isomorphism induced by the the connecting homomorphism $\delta$.
To study the map $\delta$, denote by $s_1, \ldots , s_n$ the sinks of the string $X$. Then $I = I_{s_1} \oplus \cdots \oplus I_{s_n}$.

Assume the extension $\epsilon_T$ is given by a connecting graph map as in Equation \eqref{epsilonT}. 
Denote by $E$ the longest inverse substring of $Y$ such that $E\alpha^{-1}$ is a string. Then $E$ is a factor module of $Y$, and it also a factor module of $I_{s_n}$ and a submodule of the summand of $Z$ induced by $I_{s_n}$. The corresponding graph map $f_E \in \Hom(Y,Z)$ clearly satisfies $\delta(f_E) = \epsilon_T$.

Moreover, denote by $t_1, \ldots , t_m$ the sources of the string $X$ that have two arrows in $X$ attached to it. Then the socle of the module $Z$ contains the same vertices $t_1, \ldots , t_m$.
Let now $f_T \in \Hom_{A}(X, Y)$ be a two-sided graph map given by the factorization  $T=((F_1,E,D_1),(F_2,E,D_2)) $, and let $t_i, \ldots , t_j$ be the sources of $X$ that are contained in the string $E$. Note that different factorizations $T,T'$ give rise to different intervals $[i,j],[i',j'] $. 
We let $f_E \in \Hom_{A}(Y, Z)$ be the map identifying the elements $t_k$ of $Y$ with the corresponding elements $t_k$ in $Z$, for  $k \in [i,j]$. 
It is an exercise in linear algebra to see  that $\delta(f_E) = \epsilon_T$. 

The maps $f_E$ stemming from a two-sided graph map are linearly independent  from the maps induced by connecting graph maps, and they are linearly independent amongst each other: this comes from the fact that the middle factors $E,E'$ of two-sided graph maps $f_T,f_{T'}$ cannot properly overlap. In fact, the only way that $E$ and $E'$ intersect non-trivially is that one is contained in the other. 

Moreover, the classes of $f_E$ are  linearly independent in $\Coker \pi^*$, since graph maps factoring through an injective $I$ are described in
Corollary  \ref{basis-for-I}, and these cannot alter the linear independence for the graph maps $f_E$.
Therefore the same holds for the short exact sequences $\epsilon_T$.
\end{proof}

\section{Uniqueness of kisses between exchangeable modules}\label{uniqueness}

Let $A$ be a gentle algebra. Let $Z$ be an almost complete support $\tau$-tilting $A$-module. Then by~\cite[Theorem 2.18]{AIR} we know that there exist exactly two support $\tau$-tilting $A$-modules, say $T$ and $T'$, having $Z$ as a direct summand. 
We recall from Section~\ref{torposet} that in this case, one of $\Fac T$ and $\Fac T'$ covers the other in the poset of functorially finite torsion classes.  Supposing $\Fac T$ covers $\Fac T'$, we say that $T'$ is the left mutation of $T$, and we write $T'=\mu^{-}_X(T)$. 
Let us write $T=X\oplus Z$ and $T'=Y\oplus Z$ where $X$ is a $\tau$-rigid indecomposable module, and $Y$ is a $\tau$-rigid indecomposable module which is not isomorphic to $X$ or else zero. 


Consider the maximal non-kissing collections corresponding to $T$ and $T'$.  There is
a string $C=\cohook(X)$ in the maximal non-kissing collection corresponding to $T$.  In
the maximal non-kissing collection corresponding to $T'$, this string is replaced by another
string, $D$.  We call such a pair of strings \emph{exchangeable}.  As shown in Section \ref{torposet}, there will be one or more kisses from 
$C$ to $D$.  In his more restricted setting, McConville showed that exchangeable strings kiss exactly once \cite[Theorem 3.2(3)]{McC}.  It is natural to ask whether this, like so many of the other results from 
\cite{McC}, extends verbatim to the general gentle setting.  

It turns out that 
this is not the case.   
For example, consider the gentle algebra given by the quiver 

\[ \xymatrix{
 1\ar@(ur,ul)_{\alpha}[] &2\ar[l]
}\]
with the relation $\alpha^2=0$. Then, the direct sum of the projectives, $P_1 \oplus P_2$, is a $\tau$-rigid module, its mutation at $P_1$ is the module $\tau^-(P_1) \oplus P_2$ and one has that $\dim_k \Hom(P_1,\tau (\tau^- P_1))= \dim_k \Hom(P_1, P_1)=2$. This example comes from \cite[Section 13.6]{GLS}. We thank Gustavo Jasso for pointing out its relevance to our situation.

However, when the $\tau$-rigid indecomposable $X$ is replaced by a module $Y$ which is a brick, the next theorem will show that the corresponding strings kiss once, i.e., that the dimension of $\Hom(X,\tau Y)$ is indeed one. 

\begin{theorem}\label{exchange-modules}
If $Y\oplus Z$ is obtained by a left mutation from $T=X\oplus Z$, that is, $Y\oplus Z = \mu_X^- (T)$, and $Y$ is a brick, then $\dim_k \Hom_A(X,\tau Y)=1.$ 
\end{theorem}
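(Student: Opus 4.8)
The plan is to bound $\dim_k\Hom_A(X,\tau Y)$ above and below by $1$, using the exchange sequence coming from the mutation together with the Auslander--Reiten formula.

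\emph{Lower bound.} Since $Y$ is a brick it is in particular nonzero, so the left mutation $Y\oplus Z=\mu^-_X(X\oplus Z)$ replaces $X$ by a genuine indecomposable module. Because $Y\neq 0$, the theory of mutation of support $\tau$-tilting modules \cite{AIR} provides an exchange short exact sequence
\[
0\longrightarrow X\xrightarrow{f} Z'\longrightarrow Y\longrightarrow 0,
\]
where $Z'\in\add Z$ and $f$ is a minimal left $\add Z$-approximation of $X$. This sequence does not split: otherwise $X$ would be a direct summand of $Z'\in\add Z$, contradicting that $X\oplus Z$ is basic. Hence $\Ext^1_A(Y,X)\neq 0$, and by the Auslander--Reiten formula (Theorem~\ref{ARformula}) $\overline{\Hom}_A(X,\tau Y)\cong D\Ext^1_A(Y,X)\neq 0$, so $\dim_k\Hom_A(X,\tau Y)\geq 1$. (Equivalently, this is the assertion recorded in Section~\ref{torposet} that $\cohook(X)$ kisses $\cohook(Y)$, combined with Theorem~\ref{dims}.)

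\emph{Upper bound.} I would apply $\Hom_A(-,\tau Y)$ to the exchange sequence to obtain the exact sequence
\[
\Hom_A(Y,\tau Y)\longrightarrow \Hom_A(Z',\tau Y)\longrightarrow \Hom_A(X,\tau Y)\xrightarrow{\partial}\Ext^1_A(Y,\tau Y).
\]
Here $\Hom_A(Y,\tau Y)=0$ since $Y$ is $\tau$-rigid, and $\Hom_A(Z',\tau Y)=0$ since $Z'\in\add Z$ and $Y\oplus Z$ is $\tau$-rigid (so $\Hom_A(Z,\tau Y)=0$). Therefore $\partial$ is injective and $\dim_k\Hom_A(X,\tau Y)\leq\dim_k\Ext^1_A(Y,\tau Y)$. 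The Auslander--Reiten formula also gives $\Ext^1_A(Y,\tau Y)\cong D\,\underline{\Hom}_A(Y,Y)$, which has dimension at most $\dim_k\operatorname{End}_A(Y)$. Finally, being $\tau$-rigid, $Y$ is a string module, and a string module that is a brick has $\operatorname{End}_A(Y)=k$: by Theorem~\ref{Thm-graph-maps} its endomorphism algebra has a basis of graph maps consisting of $1_Y$ together with non-invertible maps, and these non-invertible maps span a nilpotent ideal that must vanish in a division ring. Hence $\dim_k\Hom_A(X,\tau Y)\leq 1$, and together with the lower bound the theorem follows.

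I do not expect any step here to be a serious obstacle: the crux is simply to have the left-mutation exchange sequence of \cite{AIR} at hand in the right form --- in particular that its middle term lies in $\add Z$ and that the left-hand map is injective when $Y\neq 0$ --- after which the two left-exact-sequence arguments and the two invocations of the Auslander--Reiten formula are entirely formal. The only input specific to gentle algebras is the observation $\operatorname{End}_A(Y)=k$ for a string module that is a brick, which is immediate from the graph-map basis of Theorem~\ref{Thm-graph-maps}.
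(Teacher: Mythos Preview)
Your argument hinges on the existence of a short exact sequence $0\to X\to Z'\to Y\to 0$ with $Z'\in\add Z$, and you assert this is provided by \cite{AIR}. It is not. What \cite[Theorem~2.30]{AIR} actually gives is an exact sequence $X\xrightarrow{f} Z'\to Y\to 0$ in which $f$ is a minimal left $\add Z$-approximation; injectivity of $f$ is neither claimed nor true in general. For a gentle counterexample, take $A=kQ/I$ with $Q$ the two-cycle $1\xrightarrow{\alpha}2\xrightarrow{\beta}1$ and $I=(\alpha\beta,\beta\alpha)$. Left-mutating $T=P_1\oplus P_2$ at $X=P_2$, the minimal left $\add P_1$-approximation is the nonzero map $P_2\to P_1$ (factoring through $S_2$), whose kernel is $S_1\neq 0$; the cokernel is $Y=S_1$, a brick. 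So no short exact sequence of the shape you need exists here, and your upper-bound step---injecting $\Hom_A(X,\tau Y)$ into $\Ext^1_A(Y,\tau Y)$ via the connecting map of such a sequence---collapses. (Your alternative argument for the lower bound via the kiss in Section~\ref{torposet} is fine.)

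The paper avoids this obstruction by passing to the exchange \emph{triangle} $P\to R'\to Q\to P[1]$ in $K^b(\proj A)$, which exists without any injectivity hypothesis. Applying $\Hom_{K^b}(Q,-)$ and using that $Y$ is a brick to get $\dim\Hom_{K^b}(Q,Q)=1$, together with $\Hom_{K^b}(Q,R'[1])=0$ from $\tau$-rigidity of $Y\oplus Z$, yields $\dim\Hom_{K^b}(Q,P[1])\le 1$. One then needs the additional (non-automatic) step of comparing $\Hom_{K^b}(Q,P[1])$ with $\Hom_A(X,\tau Y)$, which the paper does via the exact sequence of \cite[Proposition~2.4]{AIR} and an explicit homotopy argument. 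In short, your overall strategy---sandwich the dimension using the exchange data and the brick hypothesis---is the right one, but it must be executed at the level of two-term complexes rather than modules.
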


Before proving the theorem we will explain the necessary background and a lemma which will be used in the proof of the theorem. Let \begin{displaymath}\xymatrix{
P_1\ar[r]^{p_1}& P_0 \ar[r]^{p_0}&X\ar[r]& 0 \\
Q_1\ar[r]^{q_1}& Q_0 \ar[r]^{q_0}&Y\ar[r]& 0 \\
R_1\ar[r]^{r_1}& R_0 \ar[r]^{r_0}&Z\ar[r]& 0 \\}
\end{displaymath} be minimal projective presentations, and we denote by $P, Q,$ and $R$ the corresponding two-term complexes of projective modules in $\kbp$.

We are going to prove some results in this setting before proving the main theorem of this section.
\begin{lemma}\label{lmm} We have the following properties; \begin{enumerate}
\item $\dim_k \Hom_{\kbp} (Q,Q) = 1$\label{lmm1},
\item $\Hom_{\kbp} (Q,R[1]) = 0$\label{lmm2},
\item $\dim_k \Hom_{\kbp}(Q,P[1])=1$\label{lmm3}.
\end{enumerate}
\end{lemma}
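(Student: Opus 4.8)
The plan is to transport everything into $\kbp$ through the Adachi--Iyama--Reiten correspondence between support $\tau$-tilting modules and two-term silting complexes, and then exploit the mutation triangle. Under that correspondence $P\oplus R$ and $Q\oplus R$ are the two-term silting complexes attached to $T=X\oplus Z$ and $T'=Y\oplus Z$, and since $Y\oplus Z=\mu^{-}_X(T)$ is a left mutation, $Q\oplus R$ is the left silting mutation of $P\oplus R$ at $P$: there is a triangle $P\xrightarrow{f}R'\xrightarrow{g}Q\xrightarrow{h}P[1]$ in $\kbp$ with $R'\in\add R$ and $f$ a minimal left $\add R$-approximation. As $P\oplus R$ and $Q\oplus R$ are presilting, the groups $\Hom_{\kbp}(P,P[1])$, $\Hom_{\kbp}(R,P[1])$, $\Hom_{\kbp}(Q,Q[1])$, $\Hom_{\kbp}(Q,R[1])$ all vanish, whence also $\Hom_{\kbp}(Q,R'[1])=0$ and $\Hom_{\kbp}(R',P[1])=0$. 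This already settles \ref{lmm2}: $\Hom_{\kbp}(Q,R[1])=0$ simply because $Q\oplus R$ is presilting.

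For \ref{lmm3} I would apply $\Hom_{\kbp}(Q,-)$ to the mutation triangle. Since $\Hom_{\kbp}(Q,R'[1])=0$, the long exact sequence yields a surjection $\Hom_{\kbp}(Q,Q)\twoheadrightarrow\Hom_{\kbp}(Q,P[1])$; granting \ref{lmm1} this gives $\dim_k\Hom_{\kbp}(Q,P[1])\le 1$. For the reverse inequality it suffices that the class of $h$ be nonzero in $\Hom_{\kbp}(Q,P[1])$: otherwise the triangle splits, forcing $Q\cong R'\oplus P[1]$, which contradicts that $Q$ is indecomposable, that $Q\notin\add R$ (as $Y$ is not a summand of $Z$), and that $Y\neq 0$ (a brick is nonzero, so $R'\neq 0$). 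Hence $\dim_k\Hom_{\kbp}(Q,P[1])=1$.

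The essential point is \ref{lmm1}. Passing to $H^0$ gives a ring homomorphism $\operatorname{End}_{\kbp}(Q)\to\operatorname{End}_A(Y)$ which is surjective, since any endomorphism of $Y$ lifts along the minimal projective presentation $q_1:Q_1\to Q_0$; and $\operatorname{End}_A(Y)\cong k$ because $Y$ is a brick (a brick string module over a gentle algebra has only scalar graph endomorphisms). So it remains to prove injectivity. Using the canonical triangle $H^{-1}(Q)[1]\to Q\to Y\to H^{-1}(Q)[2]$ together with $H^{-1}(Q)=\Omega_A^2 Y$, one identifies the kernel of $\operatorname{End}_{\kbp}(Q)\to\operatorname{End}_A(Y)$ with a quotient of $\Hom_{\kbp}(Q,\Omega_A^2 Y[1])\cong D\Hom_A(\Omega_A^2 Y,\tau_A Y)$; concretely, writing a chain endomorphism of $Q=(Q_1\to Q_0)$ inducing zero on $Y$, after a homotopy, with vanishing degree-$0$ component, one is left precisely with $\Coker\big(\Hom_A(Q_0,\Omega^2_A Y)\to\Hom_A(Q_1,\Omega^2_A Y)\big)$. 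I would then argue that this group vanishes using the $\tau$-rigidity of $Y$, and this is the step I expect to be the main obstacle: it is where the combinatorics of strings (or a hands-on argument with the minimality of the presentation together with $\tau$-rigidity) has to be invoked to exclude nonzero maps from the second syzygy of $Y$ into $\tau_A Y$. Granting it, $\operatorname{End}_{\kbp}(Q)\cong\operatorname{End}_A(Y)\cong k$, which is \ref{lmm1}, and then \ref{lmm3} follows as above. (Note that \ref{lmm3}, combined with the standard Adachi--Iyama--Reiten isomorphism $\Hom_{\kbp}(Q,P[1])\cong D\Hom_A(X,\tau_A Y)$, immediately gives $\dim_k\Hom_A(X,\tau_A Y)=1$, so this lemma is exactly what powers Theorem~\ref{exchange-modules}.)
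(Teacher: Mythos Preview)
Your treatment of part~(2) and of the upper bound in part~(3) matches the paper's: both use that $Q\oplus R$ is presilting (equivalently, via \cite[Lemma~3.4]{AIR}, that $Y\oplus Z$ is $\tau$-rigid) and then apply $\Hom_{\kbp}(Q,-)$ to the mutation triangle $P\to R'\to Q\xrightarrow{h}P[1]$ to obtain $\dim\Hom_{\kbp}(Q,P[1])\le\dim\End_{\kbp}(Q)$.

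For the nonvanishing in~(3) there is a slip in your splitting argument: if $h=0$ then the triangle $P\to R'\to Q\xrightarrow{0}P[1]$ splits as $R'\cong P\oplus Q$, not as $Q\cong R'\oplus P[1]$ (the latter would follow from $f=0$, not $h=0$). A contradiction is still available --- $P$ would then lie in $\add R$, forcing $X$ to be a summand of $Z$ --- but your stated reasons (indecomposability of $Q$, etc.) do not apply as written. The paper argues differently here anyway: it translates $\Hom_{\kbp}(Q,P[1])=0$ via \cite[Lemma~3.4]{AIR} into $\Hom_A(X,\tau Y)=0$ and derives a contradiction from the incompatibility of $X$ and $Y$ together with $\Hom_A(Y,\tau X)=0$ (the latter because $Y\in\Fac T$ and $X$ is Ext-projective in $\Fac T$). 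As an aside, your parenthetical claim of an isomorphism $\Hom_{\kbp}(Q,P[1])\cong D\Hom_A(X,\tau_A Y)$ is stronger than what \cite{AIR} provides; the paper's proof of Theorem~\ref{exchange-modules} uses instead the exact sequence of \cite[Proposition~2.4]{AIR}.

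The substantive divergence is in~(1). The paper does not attempt to compute the kernel of $\End_{\kbp}(Q)\to\End_A(Y)$. Its argument is direct: given $f_\bullet,g_\bullet\in\End_{\kbp}(Q)$, pass to the induced $f,g\in\End_A(Y)$; since $Y$ is a brick, $f=\lambda g$ for some $\lambda\in k$; then complete the two-term complex $Q$ to a full projective resolution $Q_\bullet$ of $Y$ and observe that $f_\bullet-\lambda g_\bullet\colon Q\to Q_\bullet$ lifts the zero endomorphism of $Y$, so is null-homotopic by the comparison theorem for projective resolutions \cite[Theorem~2.2.6]{W}. Thus the paper dispatches~(1) in two lines, without mentioning $\Omega^2_A Y$, $\tau$-rigidity, or string combinatorics; the step you flag as ``the main obstacle'' simply does not appear in that approach.
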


\begin{proof}
(1) Let $f_{\bullet}, g_\bullet \in \Hom_{\kbp}(Q,Q)$. By the universal property of cokernels, they induce the morphisms $f,g\in \Hom_A(Y,Y)$ such that $fq_0=q_0f_0$ and $gq_0=q_0g_0$. Since $Y$ is a brick module, we know that $\dim_k \Hom_{A}(Y,Y)=1$, so there exists a $\lambda\in k$ such that $f-\lambda g=0$.

Denote by $Q_\bullet$ the  projective resolution of $Y$ obtained by completing $Q$. We get the following commutative diagram
\[\xymatrix@=30pt{
Q \ar[d]^{q_0}\ar[r]^{f_\bullet-\lambda g_\bullet} & Q_\bullet \ar[d]^{q_0}\\
Y\ar[r]^{f-\lambda g \:=\: 0}& Y&. 
}
\]

Finally, by homotopy uniqueness of projective resolutions \cite[Theorem 2.2.6]{W} one can easily prove that $f_\bullet-\lambda g_\bullet$ is null-homotopic.  

(2) Following \cite[Lemma 3.4]{AIR}, this is true if and only if $\Hom_A(Z,\tau Y)=0$. Now, since $Y\oplus Z$ is $\tau$-tilting, $\Hom_A (Y\oplus Z,\tau (Y\oplus Z))=0$, which yields the result.

(3) By~\cite[Theorem 3.2 and Corollary 3.9]{AIR} we can write the exchange of $X$ and $Y$ on the level of two-term complexes.  By~\cite[Definition-Proposition 1.7]{AIR}, this guarantees that there exists a triangle in $\kbp$ : 
\[\xymatrix{P\ar[r]&R'\ar[r]&Q\ar[r]&P[1]}\]
with $R'\in \add R.$ 

Applying the functor $\Hom_{\kbp}(Q,-)$, we get the long exact sequence
\[\xymatrix{...\ar[r]&\Hom(Q,Q)\ar[r]&\Hom(Q,P[1])\ar[r]&\Hom(Q,R'[1])\ar[r]&...}.\]

Using parts (\ref{lmm1}) and (\ref{lmm2}) and the fact that $R'[1]\in \add R[1]$, we now get that $\dim_k \Hom_{\kbp}(Q,P[1])$ is either $0$ or $1$. Assume it is $0$, then by \cite[Lemma 3.4]{AIR}  we have that $\Hom(X,\tau Y)=0$. This leads a contradiction: since $X$ and $Y$ are not compatible, $ \Hom_A(X,\tau Y)$ or $ \Hom_A(Y,\tau X)$ is nonzero. However, $ \Hom_A(Y,\tau X)=0$. This follows from the definition of a left mutation \cite[Definition-Proposition 2.28]{AIR} which guarantees that $\Fac Y \subseteq \Fac Z \subseteq \Fac T$, the fact that $X$ is Ext-projective in $\Fac T$ and \cite[Proposition 1.2(a)]{AIR} which states that $\Hom_A(Y,\tau X) =0$ if and only if $\Ext^1_A(X,\Fac Y) = 0$. Thus, $\dim_k \Hom_{\kbp}(Q,P[1])$ has to be $1.$
\end{proof}

\begin{proof}[Proof of Theorem~\ref{exchange-modules}]
Rewriting slightly \cite[Proposition 2.4]{AIR}, we get an exact sequence
\[
\xymatrix{\Hom_A(Q_0,X)\ar[r]^{(q_1,X)}&\Hom_A(Q_1,X)\ar[r]& \Du \Hom_A(X,\tau Y)\ar[r]& 0.
}\]

This gives that $\dim_k \Hom_A(X,\tau Y)$ is equal to the dimension of the quotient of $\Hom_A(Q_1,X)$ by $\Image (q_1,X)$.

We now show that the dimension of this quotient is one. 

Let $f,g\in \Hom_A(Q_1,X)$. Since $Q_1$ is projective, there exists $\overline f, \overline g \in \Hom_A(Q_1,P_0)$ such that $p_0\overline f=f$ and $p_0\overline g=g$.

These functions give chain morphism in $\Hom_{\kbp}(Q,P[1])$, which is of dimension one by Lemma~\ref{lmm}$(3)$. It follows that there exists $h_0 \in \Hom_A(Q_0,P_0)$, $h_1\in \Hom_A (Q_1,P_1)$ and $\lambda\in k$ such that $\overline f - \lambda \overline g = p_1 h_1 + h_0 q_1$.

Composing the last equality with $p_0$, we get that \[f-\lambda g = p_0h_0q_1 = (q_1,X)(p_0h_0),\]
which is to say that $f-\lambda g = 0$ in the quotient space.

Since $ \Hom_A(X,\tau Y)$ is nonzero we conclude the desired result.
\end{proof}

This result supposes that the mutation produces a nonzero module $Y$. In general, the support of $Z$ can be smaller than the support of $X\oplus Z$, in which case $Z$ is itself a support $\tau$-tilting module, and left mutation of $X\oplus Z$ at $X$ yields $Z$.  In this case, let $v$ be the vertex in the support of $X$ which is not in the support of $Z$.  Denote by $P_v$ and $I_v$ the projective and injective modules at vertex $v$, respectively. 

\begin{proposition}\label{inj-kiss} Let $Z$ be a support $\tau$-tilting module obtained by a left mutation from $T=X\oplus Z$, and let $v$ be the vertex in the support of $X$ which is not in the support of $Z$.  Suppose that $P_v$ is a brick. Then $\dim_k \Hom_A(X,I_v)=1$.
\end{proposition}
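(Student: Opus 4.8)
The plan is to treat Proposition~\ref{inj-kiss} as the degenerate case of Theorem~\ref{exchange-modules}, in which the module $Y$ is zero and the summand $X$ of $T=X\oplus Z$ is replaced, under the mutation, by the formal object $P_v[1]$; on the level of two-term complexes in $\kbp$ this formal object is the complex $(P_v\to 0)$, which I will denote $Q$. As in the proof of Theorem~\ref{exchange-modules}, I would let $P$ and $R$ be the two-term complexes coming from minimal projective presentations of $X$ and $Z$; since $Z$, paired with the projective $P_v$, is the support $\tau$-tilting pair obtained from $T$ by left mutation at $X$, on the level of two-term silting complexes this mutation replaces $P$ by $Q$. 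The first step is to reduce to a statement in $\kbp$: from $I_v\cong D(e_vA)$ one gets $\dim_k\Hom_A(X,I_v)=\dim_k e_vX$, and a direct chain-level computation (using only that $P_v$ is projective, so that $\Hom_A(P_v,-)$ is exact) gives $\Hom_{\kbp}(Q,P[1])\cong\Hom_{\kbp}(P_v,P)\cong\Hom_A(P_v,X)=e_vX$. Hence $\dim_k\Hom_A(X,I_v)=\dim_k\Hom_{\kbp}(Q,P[1])$, and this is at least $1$ because $v$ lies in the support of $X$ by hypothesis; so it remains to prove the upper bound $\dim_k\Hom_{\kbp}(Q,P[1])\le 1$.

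The second step is to reproduce, in this simpler setting, the three computations of Lemma~\ref{lmm}. First, $\Hom_{\kbp}(Q,Q)\cong\Hom_{\kbp}(P_v,P_v)\cong\Hom_A(P_v,P_v)$, which is one-dimensional precisely because $P_v$ is a brick. Second, $\Hom_{\kbp}(Q,R[1])\cong\Hom_{\kbp}(P_v,R)\cong\Hom_A(P_v,Z)=e_vZ=0$, since $v$ is not in the support of $Z$; the same computation gives $\Hom_{\kbp}(Q,R'[1])=0$ for every $R'\in\add R$. Third, just as in the proof of Lemma~\ref{lmm}(3), the mutation replacing $P$ by $Q$ gives rise, via \cite{AIR}, to a triangle $P\to R'\to Q\to P[1]$ in $\kbp$ with $R'\in\add R$. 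Applying $\Hom_{\kbp}(Q,-)$ to this triangle produces an exact sequence
\[
\Hom_{\kbp}(Q,Q)\longrightarrow\Hom_{\kbp}(Q,P[1])\longrightarrow\Hom_{\kbp}(Q,R'[1])=0,
\]
so $\Hom_{\kbp}(Q,P[1])$ is a quotient of the one-dimensional space $\Hom_{\kbp}(Q,Q)$ and therefore has dimension at most $1$. Combined with the lower bound from the first step, this yields $\dim_k\Hom_A(X,I_v)=\dim_k\Hom_{\kbp}(Q,P[1])=1$.

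I expect the only real work to be bookkeeping: checking that each ingredient of the proof of Theorem~\ref{exchange-modules} --- the passage from the mutation to an exchange triangle, the identification of $\Hom_{\kbp}(Q,P[1])$ with (the dual of) $\Hom_A(X,I_v)$, the brick computation, and the vanishing of $\Hom_{\kbp}(Q,R[1])$ --- remains valid when $Y$ is replaced by the formal object $P_v[1]$, i.e.\ when $Q$ is the two-term complex $(P_v\to 0)$ rather than the presentation complex of an honest module. By contrast, the genuinely delicate step of Theorem~\ref{exchange-modules}, namely ruling out that the dimension is $0$, becomes immediate here, since $v$ is in the support of $X$ by construction.
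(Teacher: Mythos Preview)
Your proposal is correct and follows essentially the same approach as the paper: set $Q=(P_v\to 0)$, verify the three statements of Lemma~\ref{lmm} in this degenerate case (using that $P_v$ is a brick, that $v$ is outside the support of $Z$, and the exchange triangle), conclude $\dim_k\Hom_{\kbp}(Q,P[1])\le 1$, identify this with $\dim_k\Hom_A(P_v,X)=\dim_k\Hom_A(X,I_v)$, and get equality from $v$ being in the support of $X$. The only cosmetic difference is that the paper derives $\Hom_{\kbp}(Q,R[1])=0$ from $Q\oplus R$ being silting, whereas you compute it directly as $e_vZ=0$; both are fine.
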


\begin{proof} As before, define $P$ and $R$ to be the two-term complexes corresponding to minimal projective presentations of $X$ and $Z$.  
Let $Q$ be the two-term complex  $P_v \rightarrow 0$.  We now check that the statements of Lemma \ref{lmm} still hold.  (1) follows from the fact that $P_v$ is a brick.  (2) follows from $Q\oplus R$ being a silting complex~\cite[Definition 1.5]{AIR}.  We conclude as in the proof of Lemma \ref{lmm}(3) that $\dim_k\Hom_{\kbp}(Q,P[1])\leq 1$.   This means that 
$$\dim_k \Hom(P_v,P_0)/p_1(\Hom(P_v,P_1)) \leq 1.$$  Since $\Hom(P_v,P_0)/p_1 \Hom(P_v,P_1)=\Hom(P_v,X),$ we conclude that $$\dim_k \Hom(P_v,X)\leq 1.$$  Since 
$X$ is supported over the vertex $v$, the dimension is exactly one. Finally, since we have $\dim_k\Hom_A(P_v,X)=\dim_k \Hom_A (X, I_v)$, this yields the desired result.
\end{proof}

From the previous results in this section, we deduce a combinatorial corollary.

\begin{theorem}
Suppose that $A$ is a gentle algebra such that every $\tau$-rigid indecomposable $A$-module is a brick.  If two maximal non-kissing collections of long strings in $\hat Q$ differ by replacing one string by another, then these two strings kiss exactly once.\end{theorem}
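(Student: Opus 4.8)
The plan is to reduce the statement to Theorem~\ref{exchange-modules} and Proposition~\ref{inj-kiss} via the dictionary of Section~\ref{maximal-non-kissing}. Suppose two maximal non-kissing collections of long strings in $\hat Q$ differ by replacing a string $C$ by a string $D$; write them as $\mathcal S\cup\{C\}$ and $\mathcal S\cup\{D\}$, with $C\neq D$. By Theorem~\ref{thm-maximal-non-kissing} these correspond to two maximal compatible collections differing in a single element, hence by Theorem~\ref{agreement} to functorially finite torsion classes forming a cover. Relabel so that $\mathcal S\cup\{C\}$ corresponds to the larger torsion class $\Fac T$ and $\mathcal S\cup\{D\}$ to $\Fac T'$; then $T'=\mu^-_X(T)$ for the mutated indecomposable $X$. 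Now observe that replacing a string in the collection by an injective string corresponds, on the level of support $\tau$-tilting modules, to deleting a summand and adding a formal object $P_i[1]$, which leaves the module part unchanged, while replacing it by a cohook-completion $\cohook(W)$ adds back $W$ and strictly enlarges $\Fac$ (strictly, else $T=T'$). Hence the larger side carries a cohook-completion: $C=\cohook(X)$ for an indecomposable $\tau$-rigid module $X$ (necessarily a string module, since $\tau$-rigid modules over a gentle algebra are direct sums of string modules), with $T=X\oplus Z$. We are thus in one of two cases: (a) $D=\cohook(Y)$ for an indecomposable $\tau$-rigid string module $Y$ and $T'=Y\oplus Z$; or (b) $T'=Z$ and $D=I_v$ with $v$ the unique vertex in the support of $X$ but not of $Z$.

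Before bounding the number of kisses I would settle their direction. All kisses between $C$ and $D$ go from $C$ to $D$: in case (b) because the injective string $I_v$ never kisses anything (all its arrows point toward $v$); in case (a) because $\kiss(D,C)=\dim_k\Hom_A(Y,\tau_A X)=0$ by Theorem~\ref{dims}, as $X$ is $\Ext$-projective in $\Fac T$ while $Y\in\Fac T'\subseteq\Fac T$ (the cover-direction computation of Section~\ref{torposet}). Moreover $C$ and $D$ do kiss: otherwise $\mathcal S\cup\{C,D\}$ would be a non-kissing collection strictly containing the maximal collection $\mathcal S\cup\{C\}$. So it remains to prove $\kiss(C,D)\le 1$.

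In case (a), Theorem~\ref{dims} gives $\kiss(C,D)=\dim_k\Hom_A(X,\tau_A Y)$, and since $Y$ is nonzero and a brick (by hypothesis), Theorem~\ref{exchange-modules} shows this equals $1$. In case (b), I would argue as in the proof of Theorem~\ref{basis}, with $\tau_{\hat A}Y$ replaced by $I_v$: a kiss from $\cohook(X)$ to $I_v$ along $Z$ yields, by the reasoning of Lemma~\ref{kiss-a-module}, a nonzero graph map $X\to I_v$ of $\hat A$-modules (the string $Z$ is a quotient of $X$ and, by the definition of a kiss, a submodule of $I_v$), and distinct kisses yield distinct, hence linearly independent, graph maps (Theorem~\ref{Thm-graph-maps}); therefore $\kiss(\cohook(X),I_v)\le\dim_k\Hom_{\hat A}(X,I_v)$. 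Since $I_v$ is the injective $\hat A$-module at $v$ we have $\dim_k\Hom_{\hat A}(X,I_v)=\dim_k e_vX=\dim_k\Hom_A(X,I^A_v)$, where $I^A_v$ is the injective $A$-module at $v$ and the last equality holds because $v\in Q_0$, so $e_vX$ is insensitive to restriction of scalars. As $P_v$ is projective, hence $\tau$-rigid indecomposable, hence a brick by hypothesis, Proposition~\ref{inj-kiss} gives $\dim_k\Hom_A(X,I^A_v)=1$. In both cases $\kiss(C,D)=1$ and $\kiss(D,C)=0$, so $C$ and $D$ kiss exactly once.

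The main --- and rather mild --- obstacle I anticipate is marrying the case analysis with the bookkeeping between $\Hom_A$ and $\Hom_{\hat A}$ in case (b): Proposition~\ref{inj-kiss} concerns the injective $A$-module, while the long string $I_v$ is the injective $\hat A$-module, and one has to check carefully that the count of kisses is governed by $\dim_k e_vX$ exactly. Everything else follows directly from results already in place.
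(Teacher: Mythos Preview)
Your proof is correct and follows essentially the same route as the paper: reduce via Theorem~\ref{thm-maximal-non-kissing} and Theorem~\ref{covers} to the two cases $D=\cohook(Y)$ and $D=I_v$, then apply Theorem~\ref{exchange-modules} and Proposition~\ref{inj-kiss} respectively. In fact you are more careful than the paper in two places: you justify explicitly why the string $C$ on the larger side must be a cohook-completion rather than an injective string, and in case~(b) you spell out the passage from kisses to $\Hom_{\hat A}(X,I_v)$ and then to $\Hom_A(X,I_v^A)$ via $\dim_k e_vX$, whereas the paper simply asserts that each kiss yields a morphism in $\Hom_A(X,I_v)$.
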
  

\begin{proof} 
By Theorem \ref{thm-maximal-non-kissing}, we know that there is a bijective correspondence between maximal non-kissing collections of long strings in $\hat Q$ and basic support $\tau$-tilting modules for $A$.  Let the basic support $\tau$-tilting modules corresponding to the given maximal non-kissing collections be $T$ and $T'$.  
Since the two maximal non-kissing collections differ by replacing a single string by another, the corresponding support $\tau$-tilting modules differ by a single mutation.  Without loss of generality, let $T'$ be the left mutation of $T$.  Let $C$ and $D$ be
the corresponding strings.  Theorem \ref{covers} shows that there are no kisses from 
$D$ to $C$, but there is at least one kiss from $C$ to $D$.  We wish to show that there is in fact exactly one kiss.  

Let $X$ be the $A$-module associated to the long string $C$ (i.e., $C=\cohook(X)$).  There are two possibilities regarding
$D$: either it also corresponds to an $A$-module, or else it is an injective string in $\hat Q$.  Suppose first that it corresponds to an $A$-module, say $Y$.  Since $Y$ is $\tau$-rigid, it is a brick by assumption, and we can apply Theorem \ref{exchange-modules} to conclude
that $\dim\Hom(X,\tau Y)=1$.  By Theorem \ref{dims}, it follows that $C$ and $D$ kiss exactly once.  

Now, we consider the possibility that $D$ is an injective string in $\hat Q$, corresponding to the vertex $v\in Q_0$.  In this case, Proposition \ref{inj-kiss} tells us 
that $\dim\Hom_A(X,I_v)=1$. Each kiss from $C$ to $D$ gives rise to such a
morphism, so there is only one kiss between $C$ and $D$ in this case as well.  
\end{proof}

This recovers the uniqueness of kisses shown in \cite{McC}, since in that setting, bricks, strings, and $\tau$-rigid indecomposable modules all coincide.  





\end{document}